\DeclarePairedDelimiter\abs{\lvert}{\rvert}%
\DeclarePairedDelimiter\norm{\lVert}{\rVert}%
\let\oldabs\abs
\def\abs{\@ifstar{\oldabs}{\oldabs*}}
\let\oldnorm\norm
\def\norm{\@ifstar{\oldnorm}{\oldnorm*}}
\theoremstyle{definition}
\newtheorem{theorem}{Theorem}[section]
\newtheorem{lemma}[theorem]{Lemma}
\newtheorem{proposition}[theorem]{Proposition}
\newtheorem{corollary}[theorem]{Corollary}
\newtheorem{definition}[theorem]{Definition}
\newtheorem*{example}{Example}
\newtheorem{remark}[theorem]{Remark}
\numberwithin{equation}{section}
\DeclareMathOperator{\Aut}{Aut}
\DeclareMathOperator{\stab}{stab}
\DeclareMathOperator{\Sym}{Sym}
\DeclareMathOperator{\Cay}{Cay}
\DeclareMathOperator{\Prob}{Prob}
\let\phi\varphi
\let\empt\varnothing
\newcommand{\eps}{\varepsilon}
\newcommand{\acts}{\curvearrowright}
\newcommand{\actson}{\curvearrowright}
\newcommand{\EE}{\mathbb{E}} 
\newcommand{\RR}{\mathbb{R}}      
\newcommand{\ZZ}{\mathbb{Z}}      
\newcommand{\PP}{\mathbb{P}}
\newcommand{\NN}{\mathbb{N}}     
\newcommand{\one}{\mathbbm{1}}
\newcommand\restr[2]{{
  \left.\kern-\nulldelimiterspace 
  #1 
  \vphantom{\big|} 
  \right|_{#2} 
  }}
\newcommand{\cal}[1]{{\mathcal #1}}
\DeclareMathOperator{\MM}{\mathbb{M}}
\DeclareMathOperator{\MMo}{\mathbb{M}_0}
\newcommand{\al}{\alpha} 
\newcommand{\be}{\beta} 
\newcommand{\La}{\Lambda} 
\newcommand{\Rel}{\mathcal{R}} 
\DeclareMathOperator{\cost}{cost}
\DeclareMathOperator{\conv}{conv}
\DeclareMathOperator{\intensity}{int}
\DeclareMathOperator{\graph}{graph}
\DeclareMathOperator{\dom}{dom}
\DeclareMathOperator{\Seq}{seq}
\DeclarePairedDelimiter{\ceil}{\lceil}{\rceil}
\DeclareMathOperator{\cenum}{CluRep}
\DeclareMathOperator{\cluster}{Cl}
\DeclareMathOperator{\clusters}{Clusters}
\newcommand{\E}{\mathbb{E}}
\title{Unimodular random graphs with Property (T) have cost one}
\date{}
\author{Łukasz Grabowski, H\'ector Jard\'on-S\'anchez, Sam Mellick}
\begin{document}
\begin{abstract}
  Hutchcroft and Pete showed that countably infinite groups with Property (T) admit cost one actions, resolving a question of Gaboriau. We give a streamlined proof of their theorem, and extend it both to locally compact second countable groups and unimodular random graphs. We prove unimodular random graph analogues of the Connes--Weiss and Glasner--Weiss theorem characterising Property (T).

\end{abstract}
\maketitle

\tableofcontents

\section{Introduction}

In \cite{HP}, Hutchcroft and Pete resolved a longstanding question of Gaboriau by showing that all infinite groups with Kazhdan's Property (T) have cost one. The primary goal of this paper is to give an alternative proof of this theorem which further generalises to the setting of unimodular random graphs (URGs). This necessitates defining Property (T) for URGs and developing its basic theory, as well as proving analogues of the Connes-Weiss \cite{cw} and Glasner-Weiss \cite{GW} theorems.

Unimodular random graphs first appeared in Benjamini and Schramm \cite{BenjaminiSchrammUnimodular} as limit objects for sequences of finite graphs, and were later studied in their own right by Aldous and Lyons \cite{AldousDJ}. They are random rooted graphs which exhibit some degree of statistical homogeneity. The simplest examples of URGs are Cayley graphs of groups. URGs are intimately connected to the theory of countable Borel equivalence relations via graphings, as we will discuss below.

We prove the following characterisation of Property (T) for URGs, which we use to generalise the Hutchcroft-Pete theorem:

\begin{theorem}[See Theorem \ref{lem:completed2}]\label{def:(T)intro}
    A unimodular random graph $(G, o)$ has Property (T) if and only if for every $0 < \alpha < 1$ there exists $\kappa > 0$ such that any unimodular random 2-colouring $c$ of $(G,o)$ with $\alpha \leq \PP [\text{o is coloured red in } c(G,o)]\leq 1 - \alpha$ satisfies
    \[
    \EE [\#\{\text{neighbours of $o$ in $G$ with a different colour to $o$'s}\}] \geq \kappa. 
    \]
\end{theorem}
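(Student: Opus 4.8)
The plan is to recognise Theorem \ref{def:(T)intro} as the unimodular random graph incarnation of the classical "spectral gap for the difference operator" reformulation of Property (T). In the group case, a 2-colouring with a prescribed fraction of red corresponds to a nonconstant function into $\{0,1\} \subseteq \ell^2$ (after centring), and the expected number of bichromatic edges at the root is — up to normalisation by edge multiplicities — the Dirichlet energy of that function. So the statement should follow once we have in hand (i) a definition of Property (T) for URGs phrased in terms of almost-invariant vectors / negative-definite functions for unitary representations of the URG (or its full groupoid / the measured groupoid of the associated graphing), and (ii) the standard equivalence between Property (T), a uniform Poincaré-type inequality, and the non-existence of proper actions on Hilbert space (Delorme--Guichardet), transported to this setting. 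I would assume all of this machinery is available from the earlier (unseen) sections, since the theorem is explicitly stated as a restatement of "Theorem \ref{lem:completed2}".

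Concretely, here is how I would organise the argument. First, the easy direction: assume $(G,o)$ does \emph{not} have Property (T). By the characterisation of (T) in terms of almost-invariant vectors, there is a sequence of unitary representations with unit vectors that are $\e_n \to 0$ almost invariant but bounded away from the invariant vectors. One then wants to manufacture from such vectors a sequence of unimodular random 2-colourings that are "almost invariant" in the colouring sense — the fraction of red stays in $[\alpha, 1-\alpha]$ for a suitable $\alpha$ but the expected number of bichromatic neighbours tends to $0$, contradicting the conclusion. The cleanest route is to go through a unimodular random \emph{real-valued} function $f$ on $(G,o)$ with $\EE[f(o)^2]=1$ and $\EE[\#\{\text{neighbours } x \text{ of } o\} (f(x)-f(o))^2]$ small, obtained by a standard "Gaussian" or direct integral construction from the almost-invariant vectors, then threshold $f$ at a well-chosen level and use the coarea/layer-cake formula $\int_{\RR}\#\{\text{bichromatic edges for threshold } t\}\,dt = \sum_{x\sim o}|f(x)-f(o)|$, together with Markov's inequality to pick a threshold that simultaneously controls the red-fraction and the bichromatic count, mass-transport being used throughout to keep everything unimodular. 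This yields a colouring violating the inequality, so the inequality $\Rightarrow$ (T).

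For the converse, assume $(G,o)$ \emph{has} Property (T) and fix $0 < \alpha < 1$; we must produce $\kappa > 0$. Suppose not: there is a sequence of unimodular random 2-colourings $c_n$ with red-fraction in $[\alpha,1-\alpha]$ but $\EE[\#\text{bichromatic neighbours of } o \text{ in } c_n] \to 0$. Encode each $c_n$ as the indicator $\xi_n \in L^2$ of the red set inside the associated measured groupoid / graphing of the URG; then the smallness of the bichromatic count says exactly that $\xi_n$ is almost invariant under the groupoid action (the generators move $\xi_n$ by a small amount in $L^2$-norm), while the red-fraction bound $\alpha \le \EE[\xi_n(o)] \le 1-\alpha$ keeps $\xi_n$ uniformly bounded away from the constants. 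By Property (T) this is impossible — there is a uniform Kazhdan constant obstructing almost-invariant-but-not-invariant vectors — giving the desired $\kappa = \kappa(\alpha)$. The main obstacle, and the step I would spend the most care on, is the first (harder) direction: turning abstract almost-invariant vectors of a unitary representation of the URG-groupoid into an \emph{honest unimodular random 2-colouring} with the prescribed red density. One has to (a) pass from complex/real Hilbert-space vectors to a random function with the right invariance and integrability built from the unimodular structure (a direct-integral or "point process of functions" construction, being careful that the mass-transport principle is preserved), (b) choose the thresholding level so that the red-fraction genuinely lands in $[\alpha,1-\alpha]$ for \emph{all} $0<\alpha<1$ — this may require first normalising/centring $f$ and possibly mixing with an independent uniform random shift — and (c) verify that the resulting colouring is measurable as a function of the rooted graph in the appropriate sense so that it qualifies as a unimodular random colouring. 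I expect the rest (the coarea identity, Markov's inequality, the groupoid (T) equivalences) to be routine given the earlier sections.
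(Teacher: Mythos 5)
Your outline misidentifies where the real work lies, and as a result has a genuine gap in the direction ``Property (T) $\Rightarrow$ uniform isoperimetry.'' You propose to encode each offending 2-colouring $c_n$ as an indicator $\xi_n$ ``inside the associated measured groupoid / graphing of the URG'' and then invoke a uniform Kazhdan constant for that single ambient object. But a unimodular random 2-colouring of $(G,o)$ is a DURG over $\Lambda$, and different DURGs over the same $\Lambda$ generally require \emph{different} realizations (graphings) to be exhibited as Borel subsets; there is no single pmp cber in which all the $c_n$ simultaneously live, and hence no single representation whose Kazhdan constant you could apply to all of them at once. The paper's proof of Theorem~\ref{lem:completed2} addresses exactly this: given a sequence of ergodic realizations $(\cal G_n,\mu_n)$ and sets $A_n$ with $\eps \le \mu_n(A_n) \le 1-\eps$ and $\mu_n(\partial_{\cal G_n} A_n) \to 0$, it builds a chain of common ergodic extensions $\cal H_n$ and passes to the inverse limit $(\cal H_\infty,\nu_\infty)$, which is a single ergodic realization of $\Lambda$ (Proposition~\ref{prop:ergolim}, via the Kolmogorov consistency theorem). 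The pullbacks $B_n$ of the $A_n$ then form a non-trivial asymptotically invariant sequence in $\cal H_\infty$, contradicting the fact (Theorem~\ref{thm:completed1}) that every ergodic realization of a Property (T) URG is strongly ergodic. This inverse-limit step is the heart of the argument and is absent from your proposal.

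Conversely, the direction you flag as the hard one (no (T) $\Rightarrow$ a colouring violating the inequality, via Gaussian thresholding and a coarea argument) is not what the paper does here, and is not needed: once Theorem~\ref{thm:completed1} is in place, failure of Property (T) directly yields an ergodic realization that is not strongly ergodic, hence a sequence of asymptotically invariant sets $A_n$ with $\eps \le \mu(A_n) \le 1-\eps$ and vanishing edge boundary, which already \emph{are} the required 2-colourings — no Gaussian construction or thresholding is required. The Gaussian machinery does appear in the paper, but one layer earlier, in the proof of the Connes--Weiss theorem for cbers (Theorem~\ref{thm:CW}), which is then used to establish the equivalences in Theorem~\ref{thm:completed1}. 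So your sketch essentially re-derives an ingredient that the paper has already packaged, while omitting the ingredient (inverse limits over varying realizations) that actually carries the proof of the statement at hand.
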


Cost is a fundamental invariant in measured group theory, first introduced by Levitt \cite{Levitt} and greatly developed by Gaboriau \cite{Gab}. It can be viewed as a dynamical analogue of rank, the minimum size of a generating set for a group. Cost has many applications, for instance the classification of free groups up to orbit equivalence \cite{GaboriauCout}. Further applications can be found in ergodic theory, asymptotic group theory, $3$-manifold topology, percolation theory, the study of paradoxical decompositions, and operator algebra \cite{GaboriauSeward}\cite{GaboriauLyons}\cite{AbertNikolov}\cite{RussPF}\cite{GaboriauPerc}\cite{EpsteinMonod}\cite{Tarski}\cite{OA1}\cite{OA2}.

The cost of an URG is defined as follows. An \emph{extended rewiring} of an URG $(G, o)$ is a coupling $(G, H, o)$, where $(H, o)$ is an URG with the same vertex set as $(G, o)$. Then the \emph{cost} of $(G, o)$ is
\[
   \cost(G,o) =  \inf \left\{ \frac{1}{2} \EE[\deg(H,o)] : (G,H,o) \text{ is an extended rewiring of } (G, o) \right\},
\]
where $\deg (H,o)$ denotes the degree of the root $o$ in $H$. The main result of this paper is the following.

\begin{theorem}[See Theorem \ref{thm:HP}]\label{thm:HP.intro}
    Unimodular random graphs with Property (T) have cost one.
\end{theorem}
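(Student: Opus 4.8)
The plan is to combine the characterisation of Property~(T) in Theorem~\ref{def:(T)intro} with an iterative ``cluster-merging'' construction, in the spirit of Hutchcroft--Pete but phrased entirely in terms of unimodular random graphs and balanced colourings. First I would record two reductions. Property~(T) already forces the component of $o$ in $(G,o)$ to be infinite almost surely: otherwise, colouring each finite component monochromatically using a fair coin read at its label-minimal vertex produces fair $2$-colourings of $(G,o)$ with no bichromatic edges, contradicting the constant $\kappa>0$ supplied by Theorem~\ref{def:(T)intro}. Moreover any extended rewiring $(G,H,o)$ whose components are infinite satisfies $\tfrac12\EE[\deg(H,o)]\ge 1$ by the usual unimodular Euler-characteristic bound, so $\cost(G,o)\ge 1$. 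It therefore suffices to construct, for every $\eps>0$, an extended rewiring $(G,H,o)$ whose connected components coincide with those of $G$ and with $\tfrac12\EE[\deg(H,o)]\le 1+\eps$.

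To build such an $H$ I would run the following scheme. Maintain an increasing sequence $H_0\subseteq H_1\subseteq\cdots$ of factor subgraphs of $(G,o)$ enriched with i.i.d.\ labels, starting from $H_0$ the edgeless graph; call the connected components of $H_t$ the \emph{clusters at stage $t$}, and let $\Gamma_t$ be the \emph{cluster graph}, with one vertex per cluster and one edge for each $G$-edge joining distinct clusters. At stage $t$ one chooses, as a factor, a sparse subgraph $M_t\subseteq\Gamma_t$ and forms $H_{t+1}$ by adjoining one representative $G$-edge for each edge of $M_t$; the clusters of $H_{t+1}$ are then the unions of the clusters lying in a common component of $M_t$, and the density of edges added equals the density of $M_t$ against the vertices of $(G,o)$. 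Taking $H=\bigcup_t H_t$ gives $\tfrac12\EE[\deg(H,o)]=\sum_t(\text{density of }M_t)$ by monotone convergence; this sum is necessarily at least $1$, and the goal is to make the clusters exhaust the components of $G$ (so that $H$ has the right components) while keeping it below $1+\eps$. Note the $M_t$ must be allowed to contain cycles: $H$ will be connected but will carry a small positive density of cycles, since $H$ cannot be a factor spanning tree of $G$ — Property~(T) unimodular random graphs admit none — so what is really wanted is a family of cheap merging moves whose total density overshoots $1$ by at most $\eps$.

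Property~(T) is precisely what makes such merging moves available. The greedy version of this scheme stalls, and must stall for non-treeable graphs without~(T), such as Cayley graphs of free groups, where the cost is genuinely larger than $1$; the point is that stalling is forbidden by Theorem~\ref{def:(T)intro}. Concretely, if at some stage the clusters cannot be merged further at small cost, then colouring the clusters red/blue by a balanced i.i.d.\ labelling — and, to force the expected number of bichromatic neighbours of $o$ to vanish, passing to a suitably coarsened balanced labelling of the cluster structure at a large stage — produces fair $2$-colourings of $(G,o)$ with $\EE[\#\{\text{neighbours of $o$ in $G$ with a different colour}\}]\to 0$, contradicting the existence of $\kappa$. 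Quantitatively, $\kappa=\kappa(\alpha)$ guarantees a definite density of $G$-edges between the red and blue sides of \emph{every} balanced partition of the current cluster set, hence a definite amount of merging at every stage. The hard part — and the step I expect to be the main obstacle — is converting this into a genuine bound on $\sum_t(\text{density of }M_t)$: one must balance the progress guaranteed by $\kappa$ against the degradation of $\Gamma_t$ as clusters merge (growth of expected degrees, erosion of effective expansion, emergence of a dominant cluster), and organise the moves — for instance over geometrically increasing scales, with $M_t$ a cheap connected subgraph of a carefully chosen subcluster of $\Gamma_t$ — so that the densities added form a convergent series of total mass below $\eps$. This quantitative merging estimate is essentially the core of the Hutchcroft--Pete argument recast through Theorem~\ref{def:(T)intro}; the remaining ingredients (the reductions above, measurability of the factor selections, and the monotone limit) are routine.
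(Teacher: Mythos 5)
Your scheme is the iterative ``cluster‐merging'' argument of Hutchcroft and Pete recast through Theorem~\ref{def:(T)intro}; the paper's proof is genuinely different, and the difference is exactly what lets it bypass the step you flag as the obstacle. The paper never builds the cheap rewiring $H$ stage by stage. Instead, for a fixed $n$ it considers all balanced $n$-colourings of (realisations of) $(G,o)$ and the associated Kazhdan constant $K_{\cal G,\mu}(\pi)$, and uses the Glasner--Weiss theorem for URGs (Theorem~\ref{thm:glasner}, proved in Section~\ref{sec:Glasner}) as a compactness device to show that the infimum $K_\Lambda(\alpha,\eps)$ over ergodic realisations and balanced partitions is actually \emph{attained}. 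One then argues at this minimiser once and for all: if some colour class $\pi_b$ had infinitely many clusters, passing to a clusterwise Bernoulli extension and moving an $\eps$-fraction of those clusters into a neighbouring class $\pi_a$ produces a balanced partition with strictly smaller Kazhdan constant, contradicting optimality (Theorem~\ref{thm-hp-urg}). With a small set $A$ having finitely many clusters in hand, cost~$\le 1+\eps(D-1)$ follows from a short application of Gaboriau's induction formula plus vanishing markers, rather than from an explicit spanning rewiring. So the quantitative ``balance the progress of $\kappa$ against the erosion of expansion, over geometrically increasing scales'' estimate that you identify as the core of the argument is precisely the thing the paper is designed to avoid.

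As it stands your proposal therefore has a genuine gap: you never supply the bound $\sum_t(\text{density of }M_t)\le 1+\eps$, you explicitly defer it to ``essentially the core of the Hutchcroft--Pete argument,'' and nothing in the rest of the write-up substitutes for it. Theorem~\ref{def:(T)intro} alone gives a lower bound on the bichromatic boundary of \emph{one} balanced partition; turning that into a convergent series of merging densities along a dynamically built cluster hierarchy is a real quantitative argument, not a corollary. If you want a complete proof along the lines of the paper, the missing idea is exactly the one-shot optimality trick: (i)~use weak compactness of DURGs over $(G,o)$ and Glasner--Weiss to extract a Kazhdan-optimal balanced $n$-partition; (ii)~observe that optimality forbids any class from having infinitely many clusters (otherwise a clusterwise Bernoulli move strictly improves the constant); (iii)~feed the resulting small set with finitely many clusters into Gaboriau's induction formula. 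Two smaller remarks: your opening reduction that Property~(T) forces infinite components is not needed (infiniteness is a hypothesis of Theorem~\ref{thm:HP}), and the ``fair coin on each finite component'' colouring you invoke for it is not an ergodic DURG, so it does not directly contradict the formulation of Theorem~\ref{lem:completed2}; and the observation that the merging moves must be carried out in an i.i.d.-labelled \emph{extension} (your ``enriched with i.i.d.\ labels'') is correct and is exactly the role of the clusterwise Bernoulli extension in the paper, but it needs to be made precise, since the clusters are not independently labelled in the base graphing.
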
 

Cayley graphs of groups with Property (T) give natural examples of URGs with Property (T). In the appendix, a self-contained writeup of the streamlined proof of the Hutchcroft-Pete theorem is given. Further examples of URGs with Property (T) can obtained as follows.

\begin{theorem}[See Theorem \ref{thm:completed1}]\label{thm:rew.intro}
    If $(G,o)$ is an URG with Property (T) and $(G,H,o)$ is an extended rewiring, then $(H,o)$ has Property (T).
\end{theorem}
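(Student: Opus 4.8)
The plan is to verify the criterion of Theorem~\ref{def:(T)intro} for $(H,o)$ by transporting the problem to $(G,o)$ along the coupling. Fix $0<\alpha<1$ and let $c$ be any unimodular random $2$-colouring of $(H,o)$ with $\alpha\le\PP[\text{$o$ is red in }c]\le 1-\alpha$. First I would build a joint unimodular object $(G,H,c,o)$: since the extended rewiring $(G,H,o)$ and the coloured graph $(H,c,o)$ are both unimodular and both factor onto $(H,o)$ — they are carried by the same rooted vertex set — one takes their relatively independent joining over $(H,o)$, a routine construction in this setting. Its $(G,c,o)$-marginal is then a unimodular random $2$-colouring of $(G,o)$ in which $o$ and its colour are unchanged, so $\alpha\le\PP[\text{$o$ is red}]\le 1-\alpha$ persists. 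Property (T) of $(G,o)$ therefore produces a constant $\kappa=\kappa(\alpha)>0$, independent of $c$, with
\[
\EE\bigl[\#\{\text{neighbours of $o$ in $G$ coloured differently from $o$}\}\bigr]\ \ge\ \kappa .
\]

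The hard part will be converting this lower bound on bichromatic $G$-edges at the root into one on bichromatic $H$-edges at the root — the opposite of the direction one would like. The structural input is that an extended rewiring does not change the underlying connectedness: $G$ and $H$ sit on the same vertex set and generate the same equivalence relation $\mathcal R$, so each $G$-edge $\{x,y\}$ is spanned by a path in $H$ along which the colour must flip whenever $\{x,y\}$ is bichromatic. Selecting such paths unimodularly and using the mass-transport principle to send a unit of mass from each bichromatic $G$-edge to a bichromatic $H$-edge on its path bounds $\EE[\#\{\text{bichromatic $H$-neighbours of }o\}]$ below in terms of $\kappa$. The obstruction is that these paths may be long and heavily overlapping, so a crude transport loses an uncontrolled multiplicative factor; the way through is to exploit that the quantity being estimated is genuinely an invariant of $\mathcal R$, not of the graphing $G$. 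In representation-theoretic terms, failure of Property (T) is detected by almost-invariant unimodular colourings (equivalently, by almost-invariant $L^2$-vectors for the groupoid of $\mathcal R$), and the \emph{existence} of such objects — though not the quantitative Kazhdan constant — does not depend on which graphing of $\mathcal R$ is used. Making this graphing-independence precise, or equivalently carrying out a careful regularisation (local $G$-averaging) of a bad $H$-colouring into a bad $G$-colouring, is where the real content sits.

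It is likely that the paper establishes Theorem~\ref{thm:rew.intro} before the characterisation of Theorem~\ref{def:(T)intro}, directly from the primary (cocycle or representation) definition of Property (T) for URGs; from that vantage point the statement is close to immediate, since an extended rewiring leaves the associated measured groupoid unchanged and Property (T) was defined in terms of it. The colouring-level argument above is the translation of this into the combinatorial language of Theorem~\ref{def:(T)intro}.
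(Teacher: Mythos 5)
Your final paragraph correctly identifies both the actual proof strategy and the reason your primary approach stalls. The paper does indeed prove this before the colouring characterisation: by Definition~\ref{def:Tforurg} and Corollary~\ref{cor:TandBi}, Property (T) of an URG is completely determined by the pmp cber $\Rel(\cal G)$ of any realization. Realize the coupling $(G,H,o)$ by a pair of graphings $\cal G, \cal H$ on a common $(X,\mu)$; since both $G$ and $H$ are connected with the same vertex set, $\Rel(\cal G) = \Rel(\cal H)$. Passing to an appropriate ergodic component, $\Rel(\cal G)$ has Property (T) because $(G,o)$ does (Theorem~\ref{thm:completed1}, (a)$\Rightarrow$(b)), whence $\Rel(\cal H)$ has Property (T) and so does $(H,o)$. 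The substantive content hiding behind ``close to immediate'' is precisely Corollary~\ref{cor:TandBi}, whose proof requires both Proposition~\ref{prop:passup} (Property (T) passes to extensions) and Proposition~\ref{prop:passdown} (Property (T) passes to factors), the latter being nontrivial and resting on the Connes--Weiss theorem for cbers.

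By contrast, the colouring-level argument you develop first has a gap you yourself flag accurately: the mass-transport along $H$-paths of $G$-edges cannot, without further structure, control the multiplicative loss from long overlapping paths, so it does not yield a $\kappa'>0$ for $(H,o)$ from the $\kappa>0$ for $(G,o)$. This is not a cosmetic difficulty; the Kazhdan constant in Theorem~\ref{def:(T)intro} is genuinely graphing-dependent, and no uniform transport trick rescues it. What is graphing-independent is the qualitative statement (existence of some positive $\kappa$ for every $\alpha$), and that is exactly what the cber/extension machinery packages. Your relatively-independent-joining step for producing a $G$-colouring from an $H$-colouring is fine, but it is the wrong half of the problem: you would need to go from a bad $H$-colouring to a bad $G$-colouring, and a joining over $(H,o)$ does not obviously preserve ``badness'' once the graph structure changes. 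In short, your proposal identifies the right endpoint but does not reach it via the route it primarily pursues; the workable route is the one you sketch in your last paragraph, which is the paper's.
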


URGs with Property (T) may be constructed using point processes (see Section \ref{PPprelims} for definitions). Let $H$ be a locally compact second countable unimodular (lcscu) group and consider an essentially free invariant point process $\Pi$ on $H$. A \emph{connected factor graph} of $\Pi$ is a measurably and equivariantly defined graph $\mathcal{G}(\Pi)$ with vertex set $\Pi$ which is connected almost surely. The \emph{Palm process} $\Pi_0$ of $\Pi$ is the result of conditioning $\Pi$ to contain the identity $0 \in H$. Then $\mathcal{G}$ has a well-defined evaluation on $\Pi_0$, and $\mathcal{G}(\Pi_0)$ turns out to be an URG. For further details, see Section 3.2 of \cite{AbertMellick}.

\begin{theorem}[See Theorems \ref{TForGroupImpliesTForPalm} and \ref{otherway}]\label{thm:Palm.intro}
    An lcscu group $H$ has Property (T) if and only if $\mathcal{G}(\Pi_0)$ has Property (T) as an URG.
\end{theorem}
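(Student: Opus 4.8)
\emph{Overall plan.} I will prove the two implications separately, matching the two statements of the theorem (Theorems \ref{TForGroupImpliesTForPalm} and \ref{otherway}). Both rest on the colouring characterisation of Property (T) for URGs (Theorem \ref{def:(T)intro}, that is, Theorem \ref{lem:completed2}) combined with the point process dictionary of \cite{AbertMellick}: a unimodular random $2$-colouring of the Palm URG $\mathcal{G}(\Pi_0)$ is the same datum as a measurable equivariant $2$-colouring of the points of an auxiliary coupling of $\Pi$, and under this correspondence the unimodular (Palm) expectation over $\mathcal{G}(\Pi_0)$ computes the associated point process functionals. On the group side I will invoke the standard fact, valid for lcscu groups via the Gaussian construction, that $H$ fails Property (T) if and only if there exist a p.m.p.\ action $H \curvearrowright (Y,\nu)$ and Borel sets $B_n \subseteq Y$ with $\nu(B_n) = \tfrac12$ and $\nu(h B_n \triangle B_n) \to 0$ for every $h \in H$.

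\emph{From the URG to the group (Theorem \ref{otherway}).} I argue by contraposition: assuming $H$ has no Property (T), I build bad colourings of $\mathcal{G}(\Pi_0)$. Fix $H \curvearrowright (Y,\nu)$ and sets $B_n$ as above, chosen independent of $\Pi$, and work on the product space; write $y$ for the $Y$-coordinate. Colour each point $x$ of the configuration $\omega$ red if $x^{-1}\cdot y \in B_n$ and blue otherwise. This colouring is equivariant — translating $\omega$, $y$ and $x$ simultaneously by $h$ leaves $x^{-1}\cdot y$ unchanged — so by the dictionary it is a unimodular random $2$-colouring $c_n$ of $\mathcal{G}(\Pi_0)$. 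Unwinding the Palm calculus: under the unimodular measure the $Y$-coordinate is still distributed as $\nu$ and is independent of the Palm configuration, and the root is red precisely when $y \in B_n$, so $\PP[\text{$o$ is red in }c_n] = \nu(B_n) = \tfrac12$; while the expected number of neighbours of $o$ with a different colour to $o$'s equals $\EE_0\big[\sum_{x \sim_{\mathcal{G}} o} \nu(B_n \triangle x B_n)\big]$, a Palm expectation of a sum over the $\mathcal{G}$-neighbours of the root. For almost every Palm configuration this is a finite sum whose summands $\nu(B_n \triangle x B_n)$ each tend to $0$ by the almost-invariance of $(B_n)$; since the sum is dominated by $\deg_{\mathcal{G}}(o)$, which is Palm-integrable, dominated convergence yields $\EE[\#\{\text{neighbours of $o$ with a different colour to $o$'s}\}] \to 0$. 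Taking $\alpha = \tfrac14$ in Theorem \ref{def:(T)intro} shows $\mathcal{G}(\Pi_0)$ does not have Property (T).

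\emph{From the group to the URG (Theorem \ref{TForGroupImpliesTForPalm}).} This direction passes through measured groupoids, and I expect it to be the harder one. The point process formalism presents $\mathcal{G}(\Pi_0)$ as a connected graphing of the rerooting equivalence relation $\mathcal{R}_\Pi$ of $\Pi$ on its Palm space, and $\mathcal{R}_\Pi$ is the reduction of the translation groupoid $H \ltimes \mathcal{N}$ of the configuration space to the transversal of configurations containing the identity. If $H$ has Property (T) then $H \ltimes \mathcal{N}$ has Property (T) as a measured groupoid, and this descends to the Palm reduction $\mathcal{R}_\Pi$, Property (T) being stable under passing to such reductions. The colouring characterisation of Theorem \ref{def:(T)intro}, together with its insensitivity to the choice of connected factor graph — which is exactly the rewiring-invariance of Property (T) for URGs established in Theorem \ref{thm:rew.intro} — then identifies Property (T) of $\mathcal{R}_\Pi$ with Property (T) of the URG $\mathcal{G}(\Pi_0)$, finishing the proof. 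The main obstacle is this groupoid translation together with the two groupoid facts used: that the translation groupoid of a Property (T) group has Property (T), and that Property (T) is preserved under reduction to a transversal. One cannot circumvent this by the naive route of turning a bad colouring of $\mathcal{G}(\Pi_0)$ into an almost-invariant subset of $\mathcal{N}$ for the $H$-action, since for a non-amenable $H$ a set built from the colour "near the identity" is translated to a set built from the colour "near $h$", and the symmetric difference of the two need not be small.
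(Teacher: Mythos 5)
Your two directions have very different statuses, so let me take them separately.

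\emph{Theorem \ref{otherway} (Palm (T) implies group (T)).} Your contraposition via the Connes--Weiss characterisation of group Property (T) plus the colouring characterisation of URG Property (T) (Theorem \ref{lem:completed2}) is a genuinely different and, as far as I can see, correct route. You need the Connes--Weiss theorem for lcsc groups (that failure of Property (T) yields a p.m.p.\ action with a.i.\ sets of measure $\tfrac12$), which is standard and can be cited, and otherwise the argument is a clean computation: the coupled process $(\Pi, y)$ has Palm measure $\mu_0 \otimes \nu$, the root colour is $\one_{B_n}(y)$ (independent of the configuration, so the root is red with probability exactly $\tfrac12$), the boundary term is $\EE_{\mu_0}\bigl[\sum_{x \sim o} \nu(B_n \triangle x B_n)\bigr]$, and dominated convergence with dominating function $\deg_{\mathcal{G}}(o)$ finishes it. The paper instead argues directly: from a $G$-representation with almost invariant vectors one builds a constant-field representation of the Palm cber, applies Pichot's lemma to get nearby invariant fields, and then averages them over Voronoi cells via the inversion formula to get genuine $G$-invariant vectors. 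Your approach is more conceptual and leans on the Connes--Weiss machinery developed elsewhere in the paper; the paper's approach is self-contained and avoids invoking a group-level Connes--Weiss theorem, which is worth noting since the explicit reference to a locally compact version would otherwise be needed.

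\emph{Theorem \ref{TForGroupImpliesTForPalm} (group (T) implies Palm (T)).} This is where your proposal has a real gap. You identify the two facts you need --- that the translation groupoid of a Property (T) group has measured-groupoid Property (T), and that Property (T) passes from a measured groupoid to its reduction to a transversal (a null set, here the cross-section) --- but you do not prove either of them, and the second one is precisely the content that has to be established, not something that can be taken for granted at the level of generality required. The paper proves this direction by a concrete induction of representations: given a representation $(\pi,\mathcal{H})$ of the Palm cber with almost invariant fields $\xi_n$, it constructs a representation $\rho$ of $G$ on a subspace of $L^2(\MM,\mu;\mathcal{H})$ by $(\rho(g)f)(\omega) = \pi(\omega_0,\omega_g)f(g^{-1}\omega)$, lifts the fields by $\tilde\xi_n(\omega) = \xi_n(\omega_0)$, proves almost invariance of $\tilde\xi_n$ using the Voronoi inversion formula and the local finiteness of the Voronoi tessellation (Lemma \ref{lem:loc-fin}), applies Property (T) of $G$, and descends the resulting invariant vector. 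This is, in effect, an explicit hands-on verification that Property (T) passes to the Palm reduction --- precisely the groupoid fact you wanted to cite. To turn your sketch into a proof you would need either a reference that the Anantharaman-Delaroche-style Property (T) for the translation groupoid of an lcsc $G$-action is equivalent to group Property (T) and Morita-invariant down to a cross-section, or to reproduce this inductive construction; neither is done in the proposal. Your parenthetical remark about why the naive contrapositive (turning a bad colouring of $\mathcal{G}(\Pi_0)$ into an a.i.\ set for the $H$-action on $\MM$) should fail is a sound heuristic and consistent with the paper's choice of a representation-theoretic argument, but it does not substitute for the missing construction.
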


As an application of the above theorem to group cost, we generalise the Hutchcroft-Pete theorem:

\begin{theorem}[Theorem \ref{thm:realgs}]
    Let $G$ be a noncompact lcsc group with Property (T). Then $G$ has cost one.
\end{theorem}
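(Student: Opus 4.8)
The plan is to reduce the statement to the unimodular random graph case via an invariant point process, combining the Palm characterisation of Property~(T) (Theorem~\ref{thm:Palm.intro}) with the main theorem (Theorem~\ref{thm:HP.intro}). A preliminary remark: $G$ is automatically unimodular, hence a noncompact lcscu group. Indeed, the modular homomorphism $\Delta_G\colon G\to\mathbb R_{>0}$ is continuous with amenable target, so by Property~(T) the closure of its image has Property~(T), hence is compact; since $\mathbb R_{>0}$ has no nontrivial compact subgroups, $\Delta_G\equiv 1$. In particular $\cost(G)$ is defined (via essentially free p.m.p.\ actions and their cross-section equivalence relations) and $\cost(G)\ge 1$.

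Next I would fix an essentially free invariant point process $\Pi$ on $G$ --- for instance a Poisson point process of some intensity $\lambda>0$, which is essentially free since $G$ is noncompact --- together with a connected factor graph $\mathcal G$ of $\Pi$ of finite expected degree; such factor graphs exist by the constructions in Section~3.2 of \cite{AbertMellick}. Then $(\mathcal G(\Pi_0),0)$ is an URG, and it has Property~(T) by Theorem~\ref{thm:Palm.intro}. Applying Theorem~\ref{thm:HP.intro}, the URG $\mathcal G(\Pi_0)$ has cost one: for every $\e>0$ there is an extended rewiring $(\mathcal G(\Pi_0),H,0)$ with $\tfrac12\,\EE[\deg(H,0)]\le 1+\e$.

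The final step is to transport this bound back to $\cost(G)$. Each such $H$ is a (possibly randomised) factor graph of $\Pi_0$ which, by the definition of an extended rewiring together with the connectedness of $\mathcal G(\Pi_0)$, is connected almost surely; viewed as a graphing of the cross-section equivalence relation $\mathcal R_{\Pi_0}$ of $\Pi$, it therefore generates all of $\mathcal R_{\Pi_0}$. As the classes of $\mathcal R_{\Pi_0}$ are infinite ($\Pi_0$ being a.s.\ infinite, since $G$ is noncompact), Levitt's inequality \cite{Levitt} gives $\cost(\mathcal R_{\Pi_0})\ge 1$, while the rewirings above give $\cost(\mathcal R_{\Pi_0})\le 1+\e$ for every $\e>0$; hence $\cost(\mathcal R_{\Pi_0})=1$. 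The cost--intensity formula of Ab\'ert and Mellick \cite{AbertMellick} relates $\cost(G)$ and $\cost(\mathcal R_{\Pi_0})$ through an identity of the form $\cost(G)-1=c_\lambda\,(\cost(\mathcal R_{\Pi_0})-1)$ with $c_\lambda>0$; so $\cost(G)\le 1$, and together with $\cost(G)\ge 1$ this yields $\cost(G)=1$.

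I expect the main obstacle to be the bookkeeping in the last step: making precise the dictionary between extended rewirings of the URG $\mathcal G(\Pi_0)$ and graphings of the cross-section relation $\mathcal R_{\Pi_0}$, verifying that the connectedness constraint transfers so that these graphings generate all of $\mathcal R_{\Pi_0}$, and citing the correct normalisation in the cost--intensity formula. Everything else --- unimodularity of $G$, the existence of an essentially free invariant point process admitting a finite-degree connected factor graph, and the application of Theorems~\ref{thm:Palm.intro} and~\ref{thm:HP.intro} --- is standard or already in hand.
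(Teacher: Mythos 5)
Your high-level strategy is the same as the paper's: pass to a cross-section/Palm equivalence relation, use the Property~(T) transfer theorem, and invoke the Hutchcroft--Pete theorem in the cber/URG setting. You also correctly note that $G$ must be unimodular, a small point the paper leaves implicit.

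However, there is a genuine gap in your ``transport back'' step, and it is not just bookkeeping. You write that each extended rewiring $H$ can be ``viewed as a graphing of the cross-section equivalence relation $\mathcal R_{\Pi_0}$'' and conclude $\cost(\mathcal R_{\Pi_0})\le 1+\e$ for all $\e$, hence $\cost(\mathcal R_{\Pi_0})=1$. This is not what Theorem~\ref{thm:HP} gives you. An extended rewiring is by definition a \emph{coupling}; realising it as a Borel object produces a graphing of an \emph{extension} $\tilde R$ of $\mathcal R_{\Pi_0}$, not of $\mathcal R_{\Pi_0}$ itself. Indeed the paper states the theorem as ``pmp cbers with Property~(T) have a cost~1 \emph{extension}'' precisely because whether $\mathcal R_{\Pi_0}$ itself has cost one is the fixed price problem, which is not resolved here. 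So your claim $\cost(\mathcal R_{\Pi_0})=1$ is unjustified, and the cost--intensity formula applied to the single process $\Pi$ only gives information about $\cost(\Pi)$, which could exceed $1$.

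The missing ingredient is the lifting result recalled in Section~\ref{PPprelims} and cited from Bowen--Hoff--Ioana (Proposition~8.3): every class-bijective extension of a cross-section equivalence relation of $G\acts(X,\mu)$ arises as the cross-section equivalence relation of some \emph{extension action} $G\acts(\tilde X,\tilde\mu)$. With this, the cost-one extension $\tilde R$ produced by Theorem~\ref{thm:HP} is itself the Palm/cross-section relation of some essentially free point process $\tilde\Pi$ with $\intensity(\tilde\Pi)=\intensity(\Pi)$, and then $\cost(\tilde\Pi)-1=\intensity(\tilde\Pi)(\cost(\tilde R)-1)$ gives $\cost(\tilde\Pi)\le 1+\intensity(\Pi)\,\e$; taking the infimum over $\e$ yields $\cost(G)\le 1$. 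This lifting is the one substantive step your outline omits; it cannot be recovered from the point process machinery alone and needs to be cited or proved.
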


Let us now give a brief outline of the paper. Property (T) for URGs is defined via \emph{measured Property (T)}. Measured Property (T) is a generalization of Property (T) introduced by Zimmer \cite{ZimT} for pmp actions and more generally by Moore \cite{MooreT} for countable Borel equivalence relations (cbers). In the main matter of the paper, we will mostly deal with cbers. The connection of URGs with cbers is the following. 

If $X$ is a standard Borel space, a cber $R$ on $X$ is a Borel subset $R\subseteq X\times X$ determining an equivalence relation with countable equivalence classes. A Borel probability measure $\mu$ on $X$ is \emph{$R$-invariant} if the Mass Transport Principle holds: for every Borel map $f\colon R\to \RR_{\geq 0}$, $$
\int_X \sum_{yRx} f(y,x) d\mu(x) = \int_X \sum_{yRx} f(x,y) d\mu(x). 
$$ 
A \emph{probability measure preserving (pmp) cber} is a triple $(X,R,\mu)$ where $X$ is a standard Borel space with a cber $R$ and $\mu$ is an $R$-invariant Borel probability measure on $X$.

A \emph{graphing} on a pmp cber $(X,R,\mu)$ is a Borel graph $\cal G\subseteq R$ such that for $\mu$-almost every $x\in X$, the $\cal G$-connected component $\cal G_x$ of $x$ is such that its vertex set $V(\cal G_x) = [x]_R$. A graphing gives rise to an URG $(\cal G_x , x)$ by sampling vertices $x$ from $\mu$. It is a well-known fact that every URG arises in this fashion \cite[Theorem 18.37]{lovasz}. For a fixed URG $(G,o)$, if $(X,R,\mu)$ is a pmp cber with a graphing $\cal G$ such that $(\cal G_x, x) = (G,o)$ in distribution, we say $\cal G$ \emph{realises} $(G,o)$. 

Measured Property (T) is defined using representations of a pmp cber, in the analogous way to how group Property (T) is defined (see Section \ref{subsec:intro(T)}). In Section \ref{sec:(T)} we prove a characterization of measured Property (T) in the style of the Connes-Weiss Theorem. An URG is defined to have Property (T) if and only if it has a realisation with measured Property (T). The well-posedness of this definition follows from the fact that measured Property (T) of a cber is completely determined by the URG obtained from any of its graphings. This is Theorem \ref{thm:completed1}. Our generalization of the Connes-Weiss Theorem for measured Property (T) reads, in the URG language, as in Theorem \ref{def:(T)intro}.

A key tool in the proof of Theorem \ref{thm:HP.intro} is a generalization of the Glasner-Weiss Theorem to URGs, which is proved in Section \ref{sec:Glasner}, and we now discuss. By a \emph{decorated unimodular random graph (DURG)}, we mean a unimodular random graph with a vertex colouring from the set $[D]:= \{1,\dots,D\}$. A \emph{DURG over an URG} $(G,o)$ is a DURG from which $(G,o)$ is obtained by forgetting the colouring. A DURG is \emph{ergodic} if every invariant event occurs with probability 0 or 1. 

\begin{theorem}[Generalized Glasner-Weiss]\label{thm:Glasner.intro}
     An URG $(G,o)$ has Property (T) if and only if the limit of any weakly convergent sequence of ergodic  DURGs over $(G,o)$ is ergodic.
\end{theorem}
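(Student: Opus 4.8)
The plan is to deduce this from the Connes--Weiss style characterization of Property (T) for URGs, namely Theorem \ref{def:(T)intro}, by translating the ergodicity-of-limits statement into a quantitative statement about 2-colourings. Recall the classical Glasner--Weiss argument: a group has Property (T) iff the extreme points of the space of invariant measures (on any compact metric $G$-space, e.g. $[D]^G$) are closed, equivalently the limit of ergodic invariant measures is ergodic; the failure of this is detected by building, from a sequence of ergodic measures converging to a non-ergodic one, an almost-invariant vector (the indicator of an invariant set of intermediate measure) for the Koopman representation, which contradicts (T). I would run the URG analogue of this dichotomy.

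First I would establish the easy direction: suppose $(G,o)$ does \emph{not} have Property (T). By Theorem \ref{def:(T)intro}, there is some $0 < \alpha < 1$ such that for every $\kappa > 0$ one can find a unimodular random 2-colouring $c_\kappa$ of $(G,o)$ with $\alpha \le \PP[o\text{ red}] \le 1-\alpha$ but $\EE[\#\{\text{bichromatic edges at }o\}] < \kappa$. Taking $\kappa = 1/n$ and passing to a weakly convergent subsequence, I get a unimodular random 2-colouring $c_\infty$ (a DURG over $(G,o)$ with $D=2$) which, being a weak limit, has $\EE[\#\{\text{bichromatic edges at }o\}]=0$, i.e.\ almost surely no edge of $G$ is bichromatic, so the colour classes are unions of connected components and the event ``$o$ is red'' is invariant; yet by weak convergence its probability lies in $[\alpha,1-\alpha]$, so it is nontrivial. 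Hence $c_\infty$ is a non-ergodic DURG that is a weak limit of DURGs over $(G,o)$. It remains to arrange that the approximating DURGs $c_\kappa$ may be taken ergodic: this follows by the standard ergodic decomposition together with the observation that if the conclusion fails for $c_\kappa$ then it fails for some ergodic component of $c_\kappa$ (the bichromatic-edge expectation and the colour-density are both averages over ergodic components, so at least one component still has intermediate density and small bichromatic expectation — one must check both can be kept simultaneously, which is where a little care is needed, possibly by first fixing the density slightly away from $\alpha$ and using a pigeonhole/continuity argument on the decomposition). This produces a weakly convergent sequence of ergodic DURGs over $(G,o)$ whose limit is not ergodic.

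For the converse, suppose $(G,o)$ has Property (T) and let $c_n$ be ergodic DURGs over $(G,o)$ with colour set $[D]$ converging weakly to a DURG $c_\infty$ over $(G,o)$; I want $c_\infty$ ergodic. Realise everything on a common pmp cber: fix a graphing $\cal G$ on a pmp cber $(X,R,\mu)$ realising $(G,o)$, lift each $c_n$ and $c_\infty$ to $R$-equivariant colourings, i.e.\ $R$-invariant measures $\nu_n,\nu_\infty$ on the space of $[D]$-decorated graphs over $X$, with $\nu_n$ ergodic and $\nu_n \to \nu_\infty$ weakly. Suppose for contradiction that $\nu_\infty$ is not ergodic: there is an $R$-invariant Borel set $A$ with $\nu_\infty(A) = t \in (0,1)$. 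Now use the Connes--Weiss characterization (Theorem \ref{def:(T)intro}): define 2-colourings by splitting $[D]$ and forming, for each $n$, the pushforward of $\nu_n$ under a suitable approximation $\one_A$; since $\nu_n\to\nu_\infty$ weakly and $A$ is (up to the usual caveat about $A$ not being exactly clopen) well-approximated in $L^1(\nu_\infty)$ by a local function, for large $n$ the induced 2-colouring of $(G,o)$ has density of ``red'' close to $t \in (\alpha, 1-\alpha)$ for $\alpha := \min(t,1-t)/2$. On the other hand $A$ being $R$-invariant means the local function approximating $\one_A$ changes across an edge with probability $\to 0$ as the locality radius grows, uniformly in $n$ by weak convergence; hence $\EE[\#\{\text{bichromatic edges at }o\}] \to 0$ along $n$. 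This contradicts Theorem \ref{def:(T)intro} with this $\alpha$. Therefore $\nu_\infty$ is ergodic, i.e.\ $c_\infty$ is ergodic.

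The main obstacle, and where I would spend most of the effort, is the interchange between ``measurable invariant set $A$'' and ``local (finitely-supported) colouring'': weak convergence of DURGs only controls the distribution of bounded-radius neighbourhoods, so I must approximate $\one_A$ by a local event $A_r$ (a function of the radius-$r$ decorated ball) with $\nu_\infty(A \triangle A_r)$ small, transfer this to the $\nu_n$ using weak convergence, and crucially show the ``boundary'' $\EE_{\nu_n}[\#\{\text{edges at }o\text{ with }\one_{A_r}\text{ differing}\}]$ is small uniformly in large $n$ — the point being that for the true invariant set it would be exactly zero, and $A_r \to A$ makes it small for $\nu_\infty$, then weak convergence (a continuity statement for this particular bounded local functional) pushes it to the $\nu_n$. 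A secondary technical point is the ergodic-decomposition step in the easy direction; both are standard in spirit (this is exactly the content of the measured Glasner--Weiss theorem) but require the careful bookkeeping that I would present in full in the body.
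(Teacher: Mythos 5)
The converse direction (Property (T) implies ergodic weak limits) is essentially the paper's argument: approximate the $\Lambda_\infty$-invariant set $A$ by clopen sets $A_k$, observe $\Lambda_\infty^\to(\partial_E A_k) \to 0$, transfer this to the $\Lambda_n$ by weak convergence, and contradict the quantitative characterisation of Property (T). The paper makes this precise by working directly in $M^d$ via $\one_{A_k}^\to$ and Proposition~\ref{prop:est}, whereas you phrase it by lifting to a fixed cber realisation; the content is the same, and this half of your sketch is correct.

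The forward direction has a genuine gap that you flag but do not resolve. From the failure of Property (T) you produce 2-colourings $c_\kappa$ with density in $[\alpha,1-\alpha]$ and bichromatic expectation $<\kappa$, and correctly note that these need not be ergodic. Your proposed repair---ergodically decompose $c_\kappa$ and locate a component with both intermediate density and small boundary---does not work in general. The ergodic components could all have density $0$ or $1$ (and boundary $0$), with the mixture weights placing the overall density anywhere in $[\alpha,1-\alpha]$; then no single component has intermediate density, and each component is essentially a monochromatic decoration whose weak limit is ergodic, so no selection of components produces a non-ergodic limit. The paper sidesteps this entirely. Negating Property (T) via Theorem~\ref{thm:completed1} gives an \emph{ergodic} realisation $(\cal G,\mu)$ of $\Lambda$ that fails to be strongly ergodic, hence a sequence of Borel sets $A_n$ with $\eps\le\mu(A_n)\le 1-\eps$ and $\mu(\partial_{\cal G}A_n)\to 0$; the DURG sampled from the partition $A_n\sqcup A_n^c$ of this ergodic graphing is automatically ergodic, so the weakly convergent sequence is a sequence of ergodic DURGs with no further argument needed. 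Equivalently, the precise version of the characterisation you invoke is Theorem~\ref{lem:completed2}, which quantifies over ergodic realisations and hence hands you ergodicity of the colourings for free. Your argument is repaired by replacing the informal appeal to Theorem~\ref{def:(T)intro} with Theorem~\ref{lem:completed2} (or Theorem~\ref{thm:completed1}); the ergodic-decomposition detour is then unnecessary and, as written, does not close the gap.
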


The proof of the main theorem of the paper, namely Theorem \ref{thm:HP.intro} above, can be found in Section \ref{sec:HP}. The main ingredient in our proof for cost 1 is the existence of Kazhdan-optimal partitions of an URG with Property (T). We now conclude this introduction with a sketch of the proof of Theorem \ref{thm:HP.intro}. 

Let $(G,o)$ be an URG with Property (T). A unimodular $n$-colouring of $(G,o)$ is \emph{balanced} if the probability of each colour lies in the interval $(\frac{1}{n} - \frac{1}{n^2}, \frac{1}{n} + \frac{1}{n^2} )$. For each $n\in \NN$, consider $$
\kappa_n := \inf \EE [\#\{\text{neighbours of o in $G$ with a different colour}\}],
$$
where the infimum runs over all balanced unimodular $n$-colourings of $(G,o)$. As a consequence of Theorem \ref{def:(T)intro}, we have that $\kappa_n >0$. 

A \emph{Kazhdan-optimal partition} is such a balanced unimodular $n$-colouring for which the above infimum is attained. Existence of Kazhdan-optimal partitions is a consequence of the Glasner-Weiss Theorem for DURGs, Theorem \ref{thm:Glasner.intro}. 

We show that the restriction of $(G,o)$ to some colour class of a Kazhdan-optimal partition has finitely many connected components. This colour class yields a vertex percolation process on $(G,o)$ with density at most $\frac{1}{n} + \frac{1}{n^2}$ and finitely many clusters. Finally, existence of a sequence of percolation processes on $(G,o)$ with vanishing intensity and finitely many clusters implies cost one by Gaboriau's induction formula \cite[Proposition II.6]{GaboriauCout}.

The last Section of the paper, namely Section \ref{sec:pp}, is devoted to proving Theorem \ref{thm:Palm.intro} above. 

Finally, in the appendix a self-contained writeup of the streamlined proof of the original Hutchcroft-Pete theorem (for countably infinite groups) is given.

\subsection{Acknowledgments}

The authors would like to thank Anush Tserunyan and Roman Sauer for feedback that improved the readability of the paper.

{\L}G and HJS were supported by the ERC Starting Grant ``Limits of Structures
in Algebra and Combinatorics'' No.~805495.

HJS and SM were supported by the Dioscuri program initiated by the Max Planck Society, jointly managed by the National Science Centre (Poland), and mutually funded by the Polish Ministry of Science and Higher Education and the German Federal Ministry of Education and Research.

\section{Preliminaries}\label{sec:prelims}

\subsection{Cbers}

Let $X$ be a standard Borel space. A \emph{countable Borel equivalence relation (cber)} on $X$ is a Borel subset $R \subseteq X\times X$ defining an equivalence relation on $X$ with countable classes. A \emph{probability measure preserving (pmp) cber} is a triple $(X,R,\mu)$ such that $X$ is a standard Borel space with a cber $R$ and $\mu$ is a Borel probability on $X$ such that $$
\int_X \sum_{y R x} f(y,x) d\mu(x) = \int_X \sum_{y R x} f(y,x) d\mu(y),
$$
for every Borel function $f\colon R\rightarrow \RR_{\geq 0}$. A measure $\mu$ satisfying the above equality is said to be $R$-invariant. The above expression defines a measure on $R$ extending that in $X$ by letting $$
\mu (A) := \int_X  |\{y\in X : (y,x)\in A\}| d\mu(x),
$$
for every Borel subset $A \subseteq R$.

If $\Gamma \acts (X, \mu)$ is a pmp action of a countable group, then its \emph{orbit equivalence relation} $R = R(\Gamma \acts (X, \mu))$ given by
\[
    R := \{ (x, \gamma x) \in X \times X : x \in X, \gamma \in \Gamma \}
\]
is such that $(X, R, \mu)$ is a pmp cber. Moreover, it was shown by Feldman-Moore \cite{FM} that all pmp cbers arise in this way. 

If $\cal G \subseteq X\times X$ is a Borel graph on a standard Borel space, we let $\cal G_x$ denote the connected component of $x$ in $\cal G$. Such Borel graph defines a Borel equivalence relation on $X$ by letting $x \Rel (\cal G) y$ if and only if $y$ is a vertex in $\cal G_x$. A \emph{graphing} of a pmp cber is a Borel graph $\cal G \subseteq R$ such that for $\mu$-almost every $x\in X$ the vertex set of $\cal G_x$ coincides with $[x]_R$, the $R$-equivalence class of $x$. We will also say that $(X,\cal G,\mu)$ is a graphing if $(X,R_{\cal G},\mu)$ is a pmp cber.

The \emph{cost} of a pmp cber $(X,R,\mu)$ is defined as  $$
\cost (R) = \inf_{\cal G} \frac{1}{2} \int_X \deg_{\cal G}(x) d\mu(x),
$$
where $\cal G$ ranges over all graphings of $R$ and $\deg_{\cal G}$ is the degree map.

If $\Gamma$ is a countable group, then its (infimal) \emph{cost} is defined as
\[
    \cost(\Gamma) = \inf \cost(X, R, \mu),
\]
where the infimum ranges over all \emph{essentially free} pmp actions $\Gamma \acts (X, \mu)$.

A group is said to have \emph{fixed price} if all of its essentially free pmp actions have the same cost.

In Section \ref{PPprelims} we will discuss how this definition is extended to locally compact second countable unimodular groups.

If $A \subseteq X$ is a non-null Borel subset, we let $R|_A := R \cap A\times A$. Then we have that $(A, R|_A, \mu|_A)$ is a pmp cber where $\mu|_A$ is the normalization of the restriction of $\mu$ to $A$.

\begin{theorem}[Gaboriau's Induction Formula \cite{Gab}]
    Let $(X,R,\mu)$ be a pmp cber and $A\subseteq X$ a Borel subset with $\mu (A) > 0$. Then $$
    \cost (R) - 1 = \mu(A) (\cost (R|_A) -1).
    $$
\end{theorem}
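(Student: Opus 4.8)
The plan is to prove the two inequalities
\[
\cost(R)\le\mu(A)\cost(R|_A)+\mu(X\setminus A)\quad\text{and}\quad \cost(R)\ge\mu(A)\cost(R|_A)+\mu(X\setminus A),
\]
which together rearrange to the claimed identity since $1-\mu(A)=\mu(X\setminus A)$. Throughout I assume $A$ is a complete section, i.e.\ it meets $\mu$-almost every $R$-class; this is automatic when $R$ is ergodic and is genuinely needed (an $R$-class disjoint from $A$ contributes to $\cost(R)$ but is invisible to $R|_A$). Both directions reduce to combining the Mass Transport Principle (which is just $R$-invariance of $\mu$) with the elementary fact that a connected graph on $v$ vertices has at least $v-1$ edges, applied componentwise inside each $R$-class and then integrated against $\mu$; the standard descriptive-set-theoretic fact that every Borel graph admits a Borel maximal spanning forest supplies the measurable selections we need.

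For the first (``$\le$'') inequality I would fix $\e>0$ and take a graphing $\mathcal H$ of $R|_A$ with $\tfrac12\int_A\deg_{\mathcal H}\,d\mu|_A\le \cost(R|_A)+\e$, equivalently $\tfrac12\int_A\deg_{\mathcal H}\,d\mu\le \mu(A)\bigl(\cost(R|_A)+\e\bigr)$. I then build a Borel forest $T\subseteq R$ with vertex set $X$ in which every tree contains exactly one point of $A$ (a ``forest rooted at $A$''): fixing an auxiliary graphing of $R$, join each $x\notin A$ to the next vertex along the shortest path to $A$ that is least in a fixed Borel order; since $A$ is a complete section this terminates and yields such a forest. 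In it, non-root vertices correspond bijectively to edges (each $x\notin A$ to the edge joining it to its parent; no edge joins two non-roots to each other because parents strictly decrease distance to $A$), so the Mass Transport Principle gives $\tfrac12\int_X\deg_T\,d\mu=\mu(X\setminus A)$. Now $\mathcal G:=T\cup\mathcal H$ is a graphing of $R$ — in each $R$-class every point is $T$-joined to a root lying in $A$, and all such roots are joined inside $\mathcal H$ — and subadditivity of degree yields $\cost(R)\le\tfrac12\int_X\deg_T\,d\mu+\tfrac12\int_A\deg_{\mathcal H}\,d\mu\le\mu(X\setminus A)+\mu(A)\bigl(\cost(R|_A)+\e\bigr)$; let $\e\to0$.

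For the second (``$\ge$'') inequality I would fix $\e>0$, take a graphing $\mathcal G$ of $R$ with $\tfrac12\int_X\deg_{\mathcal G}\,d\mu\le\cost(R)+\e$, and extract a graphing $\mathcal H$ of $R|_A$ as follows: keep every $\mathcal G$-edge with both endpoints in $A$, and for each connected component $C$ of $\mathcal G|_{X\setminus A}$ add, by a Borel rule, the edges of a spanning tree on the set $\partial C\subseteq A$ of $\mathcal G$-neighbours of $C$ in $A$ (nonempty since $A$ is a complete section). Splitting any $\mathcal G$-path between two points of $A$ at its visits to $A$ shows $\mathcal H$ is indeed a graphing of $R|_A$. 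For the cost, write $E_{\mathrm i},E_{\mathrm{ii}},E_{\mathrm{iii}}$ for the $\mu$-masses of $\mathcal G$-edges with both, exactly one, and no endpoint in $A$, and for a component $C$ let $\mathrm{bdry}(C),\mathrm{int}(C),|C|$ count its boundary edges, internal edges, and vertices. For each $C$ one has $|\partial C|-1\le \mathrm{bdry}(C)+\mathrm{int}(C)-|C|$, since $C$ is connected (so $\mathrm{int}(C)\ge|C|-1$) and $|\partial C|\le\mathrm{bdry}(C)$, with the obvious conventions for infinite $C$. Summing this over all components and using, via the Mass Transport Principle, $\sum_C\mathrm{bdry}(C)=E_{\mathrm{ii}}$, $\sum_C\mathrm{int}(C)=E_{\mathrm{iii}}$, $\sum_C|C|=\mu(X\setminus A)$, together with the fact that the added edges have mass $\sum_C(|\partial C|-1)$, gives $\tfrac12\int_A\deg_{\mathcal H}\,d\mu\le E_{\mathrm i}+E_{\mathrm{ii}}+E_{\mathrm{iii}}-\mu(X\setminus A)=\tfrac12\int_X\deg_{\mathcal G}\,d\mu-\mu(X\setminus A)\le\cost(R)+\e-\mu(X\setminus A)$. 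Since $\mu(A)\cost(R|_A)\le\tfrac12\int_A\deg_{\mathcal H}\,d\mu$, letting $\e\to0$ closes the argument.

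The main obstacle is not any individual estimate but keeping the whole construction genuinely Borel while the mass-transport bookkeeping stays honest: one must choose the $A$-rooted forest $T$, a Borel spanning forest of $\mathcal G|_{X\setminus A}$, and spanning trees on the (possibly infinite) sets $\partial C$ in a Borel, class-equivariant way, and one must apply ``edges $\ge$ vertices $-$ components'' carefully when components are infinite. Both are routine given the standard selection theorems, but they are where the real care lies. It is worth isolating as a lemma the clean identity $\tfrac12\int_X\deg_T\,d\mu=\mu(X\setminus A)$ for an $A$-rooted spanning forest, since this is precisely the computation producing the ``$1-\mu(A)$'' on the right-hand side of the formula.
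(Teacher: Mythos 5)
The paper does not prove this statement; it quotes Gaboriau's induction formula from \cite{Gab} as a black box, so there is no internal proof to compare against. Your argument is correct and is essentially Gaboriau's own: an $A$-rooted Borel spanning forest contributes exactly $\mu(X\setminus A)$ to the cost and lifts a graphing of $R|_A$ to one of $R$, while collapsing each component $C$ of $\mathcal G|_{X\setminus A}$ to a spanning tree on its $A$-boundary $\partial C$ goes the other way. Two points are worth recording. First, you are right to insist that $A$ be a complete section: the identity genuinely fails without it (a positive-measure $R$-invariant set disjoint from $A$ contributes to $\cost(R)$ but is invisible to $R|_A$), and this hypothesis is present in Gaboriau's original statement but is omitted in the paper's. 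Second, in the lower bound the per-component inequality $|\partial C|-1\le\mathrm{bdry}(C)+\mathrm{int}(C)-|C|$ is a literal $\infty-\infty$ when $C$ is infinite; it is cleaner to rearrange to $|\partial C|-1+|C|\le\mathrm{bdry}(C)+\mathrm{int}(C)$, which is termwise true with both sides nonnegative (since $|\partial C|\le\mathrm{bdry}(C)$ and $|C|-1\le\mathrm{int}(C)$), then sum over $C$ and only afterward subtract the finite quantity $\mu(X\setminus A)$ from $E_{\mathrm{ii}}+E_{\mathrm{iii}}$, which one may take finite since otherwise $\cost(R)=\infty$ and the inequality is vacuous. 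Finally, a small slip in the upper bound: edges of the $A$-rooted forest $T$ can certainly join two vertices of $X\setminus A$ (a vertex at distance two from $A$ to its parent at distance one); the fact you actually need, and which does hold, is that $x\mapsto(x,\mathrm{parent}(x))$ is a bijection between $X\setminus A$ and the edge set of $T$ within each class, giving $\tfrac12\int_X\deg_T\,d\mu=\mu(X\setminus A)$ by the mass transport principle.
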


Let $(X,R,\mu)$ and $(Y,Q,\nu)$ be pmp cbers.  A \emph{Borel homomorphism} from $R$ onto $Q$ is a Borel map $f\colon X\rightarrow Y$ such that $xRy$ implies that $f(x) Q f(y)$ for $\mu$-almost every $(y,x)\in R$. We say that $f$ is a \emph{factor map} if it is a Borel homomorphism such that $f_*\mu =\nu$. In this case we say that $Q$ is a \emph{factor} of $R$. A \emph{(class-bijective) extension} is a factor map $f$ such that the restriction $f|_{[x]_R} \colon [x]_R \rightarrow [f(x)]_Q$ is a bijection for $\mu$-almost every $x\in X$. We also say that $R$ is an \emph{extension} of $Q$. If $(X,\cal G,\mu)$ and $(Y,\cal H,\nu)$ are graphings, we say that $f\colon X\rightarrow Y$ is an extension if it is an extension $R_{\cal G} \rightarrow R_{\cal G}$ and for $\mu$-almost every $x\in X$ the bijection $f|_{[x]_R} \colon [x]_R \rightarrow [f(x)]_Q$ induces a graph isomorphism $\cal G_x \cong \cal H_{f(x)}$.

For any Borel subset $A\subseteq X$, its
\emph{saturation} is defined by 
$$
[A]_R:=\{x\in X:~\exists a\in A \text{ such that } aRx\}.
$$
We say that $A$ is $R$-invariant if $\mu ([A]_R\backslash A) = 0$.  A pmp cber 
$(X,R,\mu)$ is \emph{ergodic} if for any  $R$-invariant Borel subset $A\subseteq X$ we have that $\mu(A) = 0,1$.

The \emph{full group} of a pmp cber $(X,R,\mu)$ is the set $$
[R] := \{\alpha\in \Aut(X) : \alpha(x)Rx  \text{ for } \mu\text{-a.e. } x\in X\}.
$$
A sequence of Borel subsets $(A_n)_n$ of $X$ is \emph{asymptotically invariant} if for every $\phi \in [R]$ we have that $$
\mu(\phi A_n \triangle A_n) \to 0.
$$
Such a sequence is \emph{trivial} if $\mu(A_n) (1 - \mu(A_n)) \to 0$. Finally, we say that $(X,R,\mu)$ is \emph{strongly ergodic} if and only if every asymptotically invariant sequence is trivial. 

The \emph{tail equivalence relation} $E_0$ is the cber defined on $2^\NN$ by letting $x R_0 y $, for a pair $x,y \in 2^\NN$, if and only if there exists $N \in \NN$ such that $x_n= y_n$ for every $n\geq N$. A pmp cber is \emph{$E_0$-ergodic} if for every Borel homomorphism $f\colon R \rightarrow E_0$ there exists $y \in 2^\NN$ such that for $\mu$-almost every $x\in X$ we have that $f(x) E_0 y$.

\begin{theorem}[Jones-Schmidt \cite{Jones}]
An ergodic pmp cber is strongly ergodic if and only if it is $E_0$-ergodic.
\end{theorem}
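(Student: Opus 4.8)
The plan is to prove both implications by contraposition; ergodicity will only be needed for one direction.

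\textbf{A nontrivial asymptotically invariant sequence yields a homomorphism to $E_0$ that is not essentially constant.} Suppose $(X,R,\mu)$ is not strongly ergodic and fix a nontrivial asymptotically invariant sequence $(A_n)_n$; passing to a subsequence we may assume $\mu(A_n)\in[\delta,1-\delta]$ for some $\delta>0$. Write $R$ as the orbit equivalence relation of a countable group $\Gamma\acts(X,\mu)$ (Feldman--Moore \cite{FM}). Since $\mu(\gamma A_n\triangle A_n)\to 0$ for each $\gamma\in\Gamma$, a diagonal argument gives a further subsequence $(A_{n_k})_k$ with $\sum_k\mu(\gamma A_{n_k}\triangle A_{n_k})<\infty$ for every $\gamma\in\Gamma$. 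Let $f\colon X\to 2^\NN$, $f(x)=(\one_{A_{n_k}}(x))_k$. By Borel--Cantelli, for each $\gamma$ the set of $x$ with $\one_{A_{n_k}}(\gamma^{-1}x)\neq\one_{A_{n_k}}(x)$ for infinitely many $k$ is null; intersecting over $\gamma\in\Gamma$ and using that $\Gamma$ is a group one obtains an $R$-invariant conull set $X_0$ on which $f(y)E_0f(x)$ whenever $yRx$, so $f$ is a Borel homomorphism $R\to E_0$. It is not essentially constant: if $f(x)E_0y_0$ for a.e.\ $x$, then for a.e.\ $x$ and all large $k$ we have $\one_{A_{n_k}}(x)=(y_0)_k$, so by dominated convergence $\mu(A_{n_k})\to 0$ along the indices with $(y_0)_k=0$ and $\mu(A_{n_k})\to1$ along the others, contradicting $\mu(A_{n_k})\in[\delta,1-\delta]$. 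Hence $R$ is not $E_0$-ergodic.

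\textbf{A homomorphism to $E_0$ that is not essentially constant yields a nontrivial asymptotically invariant sequence.} Suppose $(X,R,\mu)$ is ergodic and not $E_0$-ergodic; fix a Borel homomorphism $f\colon X\to 2^\NN$ admitting no $y_0$ with $f(x)E_0y_0$ a.e. Put $C_n=f^{-1}(\{z:z_n=1\})$. For $\phi\in[R]$ and a.e.\ $x$ the sequences $f(\phi^{-1}x)$ and $f(x)$ are $E_0$-equivalent, hence agree in all but finitely many coordinates, so dominated convergence gives $\mu(\phi C_n\triangle C_n)\to 0$: the sequence $(C_n)_n$ is asymptotically invariant. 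Post-composing $f$ with coordinatewise addition mod $2$ of the fixed sequence $y^*\in 2^\NN$ given by $y^*_n=\one[\mu(C_n)>1/2]$ — an automorphism of $(2^\NN,E_0)$, so harmless — we may assume $\mu(C_n)\le 1/2$ for all $n$. If $\limsup_n\mu(C_n)(1-\mu(C_n))>0$, a subsequence of $(C_n)$ is already a nontrivial asymptotically invariant sequence. Otherwise $\mu(C_n)\to 0$. The event $\{x:x\in C_n\text{ for infinitely many }n\}$ depends only on the tail of $f(x)$, hence is $R$-invariant, hence null or conull; if it were null then $f(x)E_0\mathbf{0}$ a.e., a contradiction, so it is conull and therefore $\mu(\bigcup_{n\ge m}C_n)=1$ for every $m$. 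Now build consecutive finite blocks $J_1,J_2,\dots$ of integers with $\min J_k\to\infty$ greedily: having built $J_1,\dots,J_{k-1}$, choose $\ell>\max J_{k-1}$ with $\mu(C_n)<1/3$ for all $n\ge\ell$ and add $C_\ell,C_{\ell+1},\dots$ one at a time; since each addition increases the measure of the partial union by less than $1/3$ while the full tail union has measure $1$, the partial union eventually lands in $[1/3,2/3]$, and we let $J_k$ be the indices used. Then $E_k:=f^{-1}(\{z:z_n=1\text{ for some }n\in J_k\})$ has $\mu(E_k)\in[1/3,2/3]$, and for $\phi\in[R]$ and a.e.\ $x$ one has $x\notin\phi E_k\triangle E_k$ as soon as $\min J_k$ exceeds the last coordinate where $f(\phi^{-1}x)$ and $f(x)$ differ; so $(E_k)_k$ is a nontrivial asymptotically invariant sequence, contradicting strong ergodicity.

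\textbf{Where the difficulty lies.} The subtle direction is the second one: an arbitrary homomorphism to $E_0$ may have a trivial coordinate sequence, so one has to \emph{manufacture} a nontrivial asymptotically invariant sequence by regrouping coordinates into blocks marching off to infinity, and this regrouping succeeds only because ergodicity forces the tail unions $\bigcup_{n\ge m}C_n$ to have full measure — the one place ergodicity is genuinely used. In the first direction the only real care needed is the bookkeeping that upgrades full-group asymptotic invariance to an orbitwise statement valid on a single invariant conull set (Borel--Cantelli plus the diagonal subsequence).
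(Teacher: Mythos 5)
The paper states the Jones--Schmidt theorem as a citation (to \cite{Jones}) and gives no proof, so there is no in-paper argument to compare against. Your proof is correct and self-contained: the Borel--Cantelli/diagonal subsequence upgrade in the forward direction is standard and carried out properly, and in the converse direction you correctly identify the one genuine subtlety — the coordinate sets $C_n$ can all be trivial, so one must regroup them into blocks $J_k$ with $\min J_k\to\infty$, and the greedy construction works precisely because ergodicity forces the tail unions $\bigcup_{n\ge m}C_n$ to be conull (else $f\sim_{E_0}\mathbf{0}$ a.e.). The only point I would phrase more carefully is the ``we may assume $\mu(C_n)\le 1/2$'' step: after replacing $f$ by $\sigma\circ f$ the sets $C_n$ should be redefined accordingly, and one should note that $\sigma\circ f$ is still a Borel homomorphism to $E_0$ with no essential limit; this is implicit in your ``harmless'' but worth making explicit in a write-up. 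Beyond that the argument is complete.
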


Let $\cal I_R$ denote the Borel subset of Borel probabily measures on $X$ which are $R$-invariant, and $\cal E_R \subseteq \cal I_R$ the set of ergodic $R$-invariant probability measures. We recall the Ergodic Decomposition Theorem.

Each pmp cber $(X, R,\mu)$ has an \emph{ergodic decomposition}, 
that is, there exists a measure space $(Z,\nu)$ and a measurable $R$-equivariant map $X \rightarrow Z$ such that the measure disintegration $\mu = \int_Z \mu_z d\nu (z)$ satisfies that for $\nu$-a.e.~$z\in Z$, we have that $(\cal G,\mu_z)$ is ergodic.
The measure space $(Z,\nu)$ 
is called the \emph{space of ergodic components} and is essentially unique.

The following proposition encapsulates, in the language of factor maps, the fact that ergodic measures are extremal points in the space of all measures preserved by a given cber.

\begin{proposition}\label{prop-ergodic}
Let $(X,R,\mu), (Y,Q,\nu)$ be pmp cbers, suppose $Q$ is ergodic and let $\Phi\colon Y \to X$ be a factor map. Let $(Z,\rho)$ be the ergodic decomposition of $Q$. Then for almost every $z\in Z$ we have that $\Phi_\ast(\nu_z)= \mu$.  
\end{proposition}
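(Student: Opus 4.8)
The plan is to exploit the extremality of ergodic measures in the simplex of $R$-invariant probability measures on $X$. First I would observe that the pushforward map is well-behaved: since $\Phi$ is a factor map from $(Y,Q,\nu)$, for any $Q$-invariant probability measure $\lambda$ on $Y$ the pushforward $\Phi_\ast\lambda$ is an $R$-invariant probability measure on $X$ (this is immediate from the Mass Transport Principle, pushing a Borel function $f\colon R\to\RR_{\geq 0}$ back along the class-bijections $\Phi|_{[y]_Q}$; note a factor map need not be class-bijective for this direction, we only need homomorphism and $\Phi_\ast\nu=\mu$). So $\Phi_\ast\nu = \mu$ by hypothesis.

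Next I would disintegrate. Let $(Z,\rho)$ be the ergodic decomposition of $Q$, with $\nu = \int_Z \nu_z\, d\rho(z)$ and $(Y,Q,\nu_z)$ ergodic for $\rho$-a.e.\ $z$. Applying $\Phi_\ast$ and using that pushforward commutes with integration of measures, we get
\[
\mu = \Phi_\ast \nu = \int_Z \Phi_\ast(\nu_z)\, d\rho(z),
\]
and each $\Phi_\ast(\nu_z)$ is $R$-invariant by the first step. Moreover each $\Phi_\ast(\nu_z)$ is itself \emph{ergodic} for $R$: if $A\subseteq X$ were $R$-invariant with $\Phi_\ast(\nu_z)$-measure strictly between $0$ and $1$, then $\Phi^{-1}(A)$ would be $Q$-invariant (up to null sets, using that $\Phi$ is a homomorphism) with $\nu_z$-measure strictly between $0$ and $1$, contradicting ergodicity of $\nu_z$. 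Hence the displayed equation exhibits the ergodic measure $\mu$ (ergodicity of $\mu$ follows since $Q$ is ergodic and $\Phi_\ast\nu=\mu$, again by pulling back invariant sets) as a $\rho$-average of ergodic $R$-invariant measures.

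Finally I would invoke extremality. The set $\mathcal E_R$ of ergodic $R$-invariant probability measures consists precisely of the extreme points of the Choquet simplex $\mathcal I_R$, and the ergodic decomposition of $\mu$ is essentially unique; since $\mu\in\mathcal E_R$ is extreme, the only way to write it as $\int_Z \Phi_\ast(\nu_z)\,d\rho(z)$ with each integrand in $\mathcal E_R$ is to have $\Phi_\ast(\nu_z) = \mu$ for $\rho$-a.e.\ $z$. (Equivalently: the map $z\mapsto \Phi_\ast(\nu_z)$ is a measurable assignment of ergodic components which, by uniqueness of ergodic decomposition and the fact that $\mu$ is already ergodic, must be $\rho$-a.e.\ constant equal to $\mu$.) The main obstacle I anticipate is handling the measure-theoretic bookkeeping carefully --- checking that $z\mapsto \Phi_\ast(\nu_z)$ is genuinely Borel as a map into the space of probability measures, that pushforward commutes with the disintegration integral, and that the null-set manipulations in the ergodicity-transfer argument are valid (in particular that $\Phi^{-1}$ of an $R$-invariant set is $Q$-invariant mod $\nu_z$-null for $\rho$-a.e.\ $z$ simultaneously). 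None of these is deep, but they need to be stated cleanly to make the extremality argument rigorous.
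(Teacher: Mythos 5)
Your proposal hinges on the opening claim that $\Phi_\ast\lambda$ is $R$-invariant for \emph{every} $Q$-invariant probability measure $\lambda$, together with the parenthetical that class-bijectivity is not needed for this. That is the gap. Verifying the Mass Transport Principle for $\Phi_\ast\lambda$ requires re-indexing a sum over $[\Phi(y)]_R$ by a sum over $[y]_Q$, and this is legitimate exactly when $\Phi|_{[y]_Q}\colon[y]_Q\to[\Phi(y)]_R$ is a bijection. With ``factor map'' read as the paper defines it (a Borel homomorphism with $\Phi_\ast\nu=\mu$), both the claim and the proposition itself fail. Concretely, let $X=\{a,b\}$, $R$ the full relation, $\mu$ uniform; let $Y=Y_1\sqcup Y_2$ with each $Y_i$ a three-element $Q$-class and $\nu$ uniform; let $\Phi$ send two points of $Y_1$ to $a$ and one to $b$, and one point of $Y_2$ to $a$ and two to $b$. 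Then $\Phi$ is a $Q$-to-$R$ Borel homomorphism with $\Phi_\ast\nu=\mu$ and $\mu$ is $R$-ergodic, but the two ergodic components of $\nu$ push forward to $(2/3,1/3)$ and $(1/3,2/3)$, neither of which is $R$-invariant, and the conclusion $\Phi_\ast(\nu_z)=\mu$ fails for every $z$.

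The fix is to require $\Phi$ to be a class-bijective extension, which is what holds in every place the paper invokes Proposition~\ref{prop-ergodic} (inverse-limit projections, the clusterwise Bernoulli extension, extensions of graphings). Under that hypothesis your argument goes through: class-bijectivity gives the $R$-invariance of each $\Phi_\ast(\nu_z)$; the disintegration yields $\mu=\int_Z\Phi_\ast(\nu_z)\,d\rho(z)$; and extremality of the ergodic $\mu$ in $\mathcal{I}_R$ (equivalently, essential uniqueness of ergodic decomposition) forces $\Phi_\ast(\nu_z)=\mu$ for $\rho$-a.e.\ $z$. You do not actually need the separate observation that each $\Phi_\ast(\nu_z)$ is $R$-ergodic --- $R$-invariance plus extremality of $\mu$ already suffices --- though that observation is true and your pullback argument for it is fine. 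Separately, note that the stated hypothesis ``$Q$ is ergodic'' makes the ergodic decomposition of $Q$ trivial and the proposition vacuous; it should read that $R$ (equivalently $\mu$) is ergodic, and your proof, which only uses ergodicity of $\mu$, works under either reading once class-bijectivity is imposed.
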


\subsection{Unitary representations of cbers and Property (T)}
\label{subsec:intro(T)}
We now proceed to recall the concepts of measurable fields of Hilbert spaces and representation of a cber. The reader is referred to \cite[Section IV.8]{takesaki} for a thorough introduction to these concepts. 

Let $(X,\mu)$ be a standard probability space. A \emph{(measurable) field of Hilbert spaces} on $(X,\mu)$ consists of a family of Hilbert spaces $(\cal H^x)_{x\in X}$ and a subspace $\cal M$ of the vector space $\prod_{x\in X} \cal H^x$ such that 
\begin{itemize}
\item for every $\xi\in \cal M$ the function $x\in X\mapsto \|\xi(x)\|$ is $\mu$-measurable, 
\item for any $\eta \in \prod_{x\in X} \cal H_x$, if the function $x\in X \mapsto \langle \eta(x) , \xi(x)\rangle$ is $\mu$-measurable for every $\xi \in \cal M$, then $\eta \in \cal M$, and
\item there exist $\{\xi_n\}_{n\in \NN}$ such that for almost every $x\in X$, the set $\{\xi_n (x)\}_{n\in \NN}$ is total in $\cal H^x$. 
\end{itemize}

We call the elements of $\cal M$ \emph{(measurable) fields}. Whenever it is clear from the context, we will denote the measurable field of Hilbert spaces simply by $\cal H$, not making an explicit reference to the family of measurable fields. For brevity's sake, we will often refer to measurable vector fields by just fields, or to measurable fields of Hilbert spaces as fields of Hilbert spaces. Unless stated otherwise, we will consider complex Hilbert spaces. 

If $(X,R,\mu)$ is a pmp cber, a \emph{unitary (orthogonal) representation} of $R$ is a pair $(\pi,\cal H)$ where $\cal H$ is a field of complex (real) Hilbert spaces over $(X,\mu)$ together with a family of operators $\{\pi(y,x) : (y,x)\in R\}$ such that 
\begin{itemize}
\item for every $(y,x)\in R$, the operator $\pi(y,x) \colon \cal H^x \rightarrow \cal H^y$ is unitary (orthogonal), 

\item whenever $xRyRz$, we have $\pi(z,x) = \pi(z,y)\pi(y,x)$, and 

\item the map $x\in X \mapsto \langle\pi(y,x) \xi(x) , \eta(y) \rangle$  is measurable for any two measurable fields $\xi,\eta \in \cal H$.
\end{itemize}

Let $(\pi, \cal H)$ be a representation of a pmp cber $(X,R,\mu)$. A field $\xi$ in $\cal H$ is \emph{invariant} if for $\mu$-almost every $(y,x)\in R$ we have that $\pi(y,x)\xi(x) = \xi (y)$. We say that a field $\xi$ in $\cal H$ is \emph{normalized} if $\|\xi\| =1$ almost surely. The representation $(\pi,H)$ \emph{has non-trivial invariant fields} if it admits a normalised invariant field. A sequence of fields $(\xi_n)_{n\in \NN}$ is \emph{almost invariant} if for $\mu$-almost every $(y,x)\in R$, \begin{equation*}
\| \pi(y,x) \xi_n(x) - \xi_n(y) \| \rightarrow 0.
\end{equation*}
The representation $(\pi,H)$ has \emph{non-trivial almost invariant fields} if it admits an almost invariant sequence of normalised fields.

\begin{definition}
    A ergodic pmp cber has \emph{(measured) Property (T)} if and only if every representation of it with non-trivial almost invariant fields has non-trivial invariant fields. 
\end{definition}

The rest of this section is devoted to recalling some useful lemmas and fundamental facts about measured Property (T). The first one relates almost invariant fields with invariant fields.

\begin{theorem}[{\cite[Theorem 6]{pichot}}]
    Let $(R,\mu)$ be a pmp cber with Property (T) and $(\pi,\cal H)$ a representation. If $(\xi_n)_n$ is a sequence of normalized almost invariant fields then, upon possibly passing to a subsequence, there exists a sequence of normalized invariant fields $(\eta_n)_n$ such that $$
    \|\xi_n - \eta_n\| \rightarrow 0,
    $$
    almost surely.
\end{theorem}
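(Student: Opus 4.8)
The plan is to reduce the statement to a single $L^2$-estimate and then renormalise. Write $\cal K := L^2(X,\cal H)$ for the Hilbert space of square-integrable fields, on which the full group $[R]$ acts unitarily by $(\phi\cdot\xi)(x):=\pi(x,\phi^{-1}x)\,\xi(\phi^{-1}x)$; its fixed-vector space $\cal K^R$ is exactly the space of invariant fields, and we write $P\colon\cal K\to\cal K^R$ for the orthogonal projection (which is $[R]$-equivariant). The $\xi_n$ are unit vectors of $\cal K$. Almost-invariance is a pointwise statement on $R$; restricting it to the graph of a fixed $\phi\in[R]$, which has total mass $1$ inside $R$, and bounding the integrand $\norm{\pi(\phi x,x)\xi_n(x)-\xi_n(\phi x)}^2$ by $4$, dominated convergence yields $\norm{\phi\cdot\xi_n-\xi_n}_{\cal K}\to0$ for every $\phi\in[R]$. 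The whole proof then reduces to showing $\norm{\xi_n-P\xi_n}_{\cal K}\to0$: granting this, $\norm{P\xi_n}_{\cal K}\to1$, the function $x\mapsto\norm{(P\xi_n)(x)}$ is $R$-invariant hence, by ergodicity of $R$, $\mu$-a.e.\ equal to the constant $\norm{P\xi_n}_{\cal K}$, so for large $n$ the field $\eta_n:=P\xi_n/\norm{P\xi_n}_{\cal K}$ is a genuine normalised invariant field with $\norm{\xi_n-\eta_n}_{\cal K}\to0$; passing to a subsequence gives $\norm{\xi_n(x)-\eta_n(x)}\to0$ for $\mu$-a.e.\ $x$, as required.

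To prove $\norm{\xi_n-P\xi_n}_{\cal K}\to0$ I would decompose the field $\cal H$. First, $\pi$ has a non-trivial invariant field — otherwise, by Property (T), it would have no non-trivial almost invariant fields, contradicting the existence of the $\xi_n$ — so $\cal K^R\neq0$; fix an $L^2$-orthonormal basis $(\eta_k)_{k\in I}$ of $\cal K^R$, with $I$ countable. By ergodicity each fibrewise inner product $\langle\eta_k(x),\eta_l(x)\rangle$ is $R$-invariant, hence $\mu$-a.e.\ equal to $\langle\eta_k,\eta_l\rangle_{\cal K}=\delta_{kl}$, so $(\eta_k(x))_{k\in I}$ is an orthonormal family for a.e.\ $x$. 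Put $\cal H_0^x:=\overline{\operatorname{span}}\{\eta_k(x):k\in I\}$ and $\cal H_1^x:=(\cal H_0^x)^{\perp}$; these define $\pi$-invariant measurable subfields, so $\pi=\pi_0\oplus\pi_1$ and $\xi_n=\xi_n^0\oplus\xi_n^1$ with each summand fibrewise almost invariant for the corresponding subrepresentation. Two observations: via $\eta_k(x)\leftrightarrow e_k$ the representation $\pi_0$ is isomorphic to the trivial representation on the constant field $\ell^2(I)$; and $\pi_1$ has no non-trivial invariant field (such a field would lie in $\cal K^R$, hence be fibrewise in $\cal H_0$, while being fibrewise in $\cal H_1=\cal H_0^{\perp}$), so by Property (T) it has no non-trivial almost invariant field.

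The two parts are then controlled separately. For $\pi_1$: the functions $f_n:=\norm{\xi_n^1(\cdot)}^2\in[0,1]$ satisfy $\abs{f_n(x)-f_n(y)}\le2\norm{\pi_1(y,x)\xi_n^1(x)-\xi_n^1(y)}$, so $(f_n)$ is an asymptotically invariant sequence of uniformly bounded functions; since Property (T) implies strong ergodicity of $R$, a standard level-set argument shows $(f_n)$ is asymptotically equal to a constant $c_n\in[0,1]$, and if $\limsup_n c_n>0$ then, along a subsequence, $\norm{\xi_n^1(x)}$ converges a.e.\ to a strictly positive constant, so $\xi_n^1(\cdot)/\norm{\xi_n^1(\cdot)}$ is a well-defined normalised almost invariant sequence for $\pi_1$ (the renormalisation is harmless because $\norm{\xi_n^1(x)}=\norm{\pi_1(y,x)\xi_n^1(x)}$), contradicting Property (T) together with the absence of invariant fields for $\pi_1$. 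Hence $c_n\to0$, so $\norm{\xi_n^1}_{\cal K}\to0$ (and, along a subsequence, $\norm{\xi_n^1(x)}\to0$ a.e.). For $\pi_0$: writing $\xi_n^0=\sum_{k\in I}a_k^n(\cdot)\,\eta_k(\cdot)$ with $a_k^n:=\langle\xi_n(\cdot),\eta_k(\cdot)\rangle\in L^2(X)$, strong ergodicity of $R$ supplies a finite $F\subseteq[R]$ and $\kappa>0$ with $\norm{a-\textstyle\int_X a\,d\mu}_2^2\le\kappa^{-2}\max_{\phi\in F}\norm{a\circ\phi-a}_2^2$ for all $a\in L^2(X)$; using that fibrewise orthonormality gives both $\norm{\xi_n^0-P\xi_n}_{\cal K}^2=\sum_k\norm{a_k^n-\int_X a_k^n\,d\mu}_2^2$ and $\norm{\phi\cdot\xi_n^0-\xi_n^0}_{\cal K}^2=\sum_k\norm{a_k^n\circ\phi^{-1}-a_k^n}_2^2$, together with $\norm{\phi\cdot\xi_n^0-\xi_n^0}_{\cal K}\le\norm{\phi\cdot\xi_n-\xi_n}_{\cal K}\to0$ (the fibrewise projection onto $\cal H_0$ commutes with the $[R]$-action), summing over $k\in I$ gives
\[
\norm{\xi_n^0-P\xi_n}_{\cal K}^2\le\kappa^{-2}\sum_{\phi\in F}\norm{\phi\cdot\xi_n^0-\xi_n^0}_{\cal K}^2\longrightarrow0 .
\]
Adding the two estimates gives $\norm{\xi_n-P\xi_n}_{\cal K}\to0$, which completes the plan.

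The main obstacle is the decomposition in the second paragraph. The naive move — restrict $\pi$ to $(\cal K^R)^{\perp}$, which has no non-trivial invariant fields, and invoke Property (T) — fails, because $(\cal K^R)^{\perp}$ is not the $L^2$-space of a subrepresentation of $R$: it is not an $L^{\infty}(X)$-submodule, invariance of a field being a constraint along whole orbits rather than fibrewise. Splitting the fibrewise ``trivial'' part $\pi_0$ off from the ``genuinely infinite'' part $\pi_1$ is exactly what repairs this, with $\pi_1$ controlled by Property (T) directly and $\pi_0$ by strong ergodicity (a Hilbert-space-valued spectral gap obtained by tensoring the scalar one with $\ell^2(I)$). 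Two further points need care: the input ``(measured) Property (T) $\Rightarrow$ strong ergodicity of $R$'' is standard but should be established without circularity — it is precisely the special case of the present theorem for the trivial representation, so it requires its own argument (e.g.\ via Jones--Schmidt, pulling back an almost invariant field of an amenable relation through a non-$E_0$-ergodic homomorphism); and ``normalised'' means \emph{fibrewise} norm $1$ almost everywhere, so each renormalisation above must be justified by first checking that the relevant fibrewise norms converge to a strictly positive constant, which is where ergodicity is used.
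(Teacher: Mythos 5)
The paper imports this statement from \cite[Theorem 6]{pichot} without proof, so there is no internal proof to compare against; I assess the proposal on its own. Your overall architecture is sound and you correctly identify the main obstacle (that $(\cal K^R)^\perp$ is not an $L^\infty(X)$-submodule, hence not a subrepresentation) and repair it with the fibrewise decomposition $\pi=\pi_0\oplus\pi_1$, controlling $\pi_1$ by Property~(T) and $\pi_0$ by a spectral-gap estimate.

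The gap is in the $\pi_0$ step. The assertion that ``strong ergodicity of $R$ supplies a finite $F\subseteq[R]$ and $\kappa>0$ with $\|a-\int a\,d\mu\|_2^2\le\kappa^{-2}\max_{\phi\in F}\|a\circ\phi-a\|_2^2$'' does not follow from strong ergodicity. With the paper's definition (every asymptotically invariant sequence of sets is trivial), strong ergodicity is an asymptotic statement about sequences; it says nothing quantitative over a fixed finite set, and for the Polish group $[R]$ (which is not finitely generated) ``no almost invariant vectors in $L^2_0(X)$'' and ``spectral gap over some finite $F$'' are genuinely different assertions: the usual diagonal extraction that identifies them requires a fixed finite (or compact) generating set. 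The correct source of the finite $F$ is Property~(T) itself, via non-approximability: by \cite[Corollary 18]{pichot}, cited in the paper, $R$ admits a bounded-degree graphing, hence (via a Borel edge-colouring) is generated by finitely many full-group involutions $\phi_1,\dots,\phi_m$; strong ergodicity of $R$ is then equivalent to strong ergodicity of the finitely generated group $\langle\phi_1,\dots,\phi_m\rangle\acts X$, and \emph{for a finitely generated group} ``no almost invariant vectors in $L^2_0$'' is equivalent to the spectral gap over the generating set, which is what you need. As written, you derive the finite $F$ from strong ergodicity alone, which is exactly the kind of silent gap you flag at the end of your own argument but do not catch here. Two smaller points: the parenthetical reason given for the harmlessness of renormalising $\xi_n^1/\|\xi_n^1\|$ (the isometry $\|\xi_n^1(x)\|=\|\pi_1(y,x)\xi_n^1(x)\|$) is not the operative fact — what is needed, and what you do establish, is that the fibrewise norms converge a.e.\ to a positive constant along a subsequence, plus a small fix for the possibly non-null set where $\|\xi_n^1(x)\|=0$ at a fixed $n$. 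Once the spectral-gap step is properly routed through Property~(T), the proof is complete.
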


The following is a helpful lemma for proving existence of invariant fields. The lemma is just a convenient generalization to cber representations of \cite[Proposition 1.1.5]{bhv}.

\begin{lemma}[Correlation lemma]
Let $(X,R,\mu)$ be an ergodic pmp cber and $(\pi,H)$ an orthogonal representation. Suppose there exist $\delta > 0$, a non-null Borel subset $A \subseteq X$, and a normalised field $\xi$ such that for every $(y,x) \in R|_A$, \begin{equation}\label{eq:prelim1}
\langle \pi(y,x) \xi(x) , \xi(y) \rangle \geq \delta.
\end{equation}
Then $(\pi,H)$ has non-trivial invariant fields.
\end{lemma}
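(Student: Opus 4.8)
The plan is to mimic the classical proof of Proposition 1.1.5 in \cite{bhv}: average the field $\xi$ (or rather an associated positive-definite kernel built from $\xi$) over the equivalence relation to manufacture a genuine invariant field, and then use the lower bound \eqref{eq:prelim1} together with ergodicity to guarantee that this averaged field is nonzero. Concretely, I would first reduce to the case $A = X$. Because the hypothesis is about $R|_A$, one passes to the ergodic pmp cber $(A, R|_A, \mu|_A)$ and the restricted representation; if one produces an $R|_A$-invariant normalised field $\eta$ on $A$, one extends it over the $R$-saturation $[A]_R = X$ (using ergodicity, $\mu([A]_R) = 1$) by transporting along $\pi$: for $x \in X$ choose $a \in A$ with $aRx$ and set $\eta(x) = \pi(x,a)\eta(a)$, checking Borel measurability via a Borel selection of such an $a$ and well-definedness from $R|_A$-invariance. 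So it suffices to treat $A = X$.

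With $A = X$, consider the field of Hilbert spaces $\cal K$ whose fibre at $x$ is $\ell^2([x]_R) \otimes \cal H^x$ — more precisely, the space of $\ell^2$-summable sections $\zeta$ of the "tautological" bundle $y \mapsto \cal H^y$ over the class $[x]_R$, with $R$ acting by translating the index and applying $\pi$. This is the natural cber analogue of the induced/regular representation. Define a section $\Theta$ of $\cal K$ by $\Theta(x) = \big( y \mapsto \pi(y,x)\xi(x) \big)_{y R x}$; however this is not $\ell^2$ since each vector has norm $1$. The standard fix is instead to consider the vector-valued kernel and average: for the real (orthogonal) case, work with the positive-semidefinite field $\Phi(y,x) = \langle \pi(y,x)\xi(x), \xi(y)\rangle$, which satisfies $\Phi(x,x) = 1$, $|\Phi(y,x)| \le 1$, $\Phi(y,x) = \Phi(x,y)$, is positive-definite along each class, and by \eqref{eq:prelim1} satisfies $\Phi \ge \delta$ everywhere on $R$. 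By the GNS construction fibrewise (applied measurably), $\Phi$ is realised as $\Phi(y,x) = \langle \pi'(y,x)\eta(x), \eta(y)\rangle$ for a representation $(\pi', \cal H')$ with a cyclic normalised field $\eta$, and the map $\pi(y,x)\xi(x) \mapsto \pi'(y,x)\eta(x)$ intertwines; since the bound $\Phi \ge \delta > 0$ passes to $(\pi', \cal H')$, it suffices to find invariant fields there. Then one observes that $\eta$ itself is "almost diagonal": $\|\pi'(y,x)\eta(x) - \eta(y)\|^2 = 2 - 2\Phi(y,x) \le 2 - 2\delta$, so the field is not merely almost invariant but uniformly close to invariance along every edge of $R$.

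To extract an actual invariant field I would then average over the class. The clean way: in the Hilbert space $L^2(X, \cal H'; \mu)$ (the $L^2$-sections of $\cal H'$), the isometries of the full group $[R]$ act, and by the mass transport / $R$-invariance of $\mu$ this is a unitary representation of $[R]$ — or more intrinsically, one works with the von Neumann algebra of $R$ and the corresponding module. The uniform bound $\|\pi'(y,x)\eta(x) - \eta(y)\|^2 \le 2(1-\delta)$ translates into: for every $\phi \in [R]$, $\|\phi \cdot \eta - \eta\|_{L^2}^2 \le 2(1-\delta) < 2 = \|\phi\cdot\eta\|^2 + \|\eta\|^2$, so $\eta$ is bounded away from the origin uniformly under the $[R]$-action, hence $0$ is not in the closed convex hull of the $[R]$-orbit of $\eta$. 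Let $\eta_0$ be the unique element of minimal $L^2$-norm in $\overline{\conv}([R]\cdot\eta)$; it is $[R]$-invariant and nonzero, which translates back to an $R$-invariant nonzero measurable field. Normalising $\eta_0$ fibrewise (it is nonzero a.e. by ergodicity, since $\{x : \eta_0(x) \ne 0\}$ is $R$-invariant and has positive measure) yields the desired normalised invariant field, and transporting back through the GNS intertwiner and then back through the restriction-to-$A$ reduction completes the argument.

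The main obstacle is carrying out the GNS construction and the convex-hull averaging \emph{measurably in $x$} — i.e. making sure every fibrewise Hilbert-space operation (the GNS space, the minimal-norm element of a closed convex set, fibrewise normalisation) assembles into a genuine measurable field of Hilbert spaces and a measurable representation. This is where the disintegration/measurable-selection machinery of \cite[Section IV.8]{takesaki} and the direct-integral formalism have to be invoked carefully; the functional-analytic content, by contrast, is essentially the one-line $\delta$-trick of \cite[Proposition 1.1.5]{bhv}. A secondary technical point is that $[R]$ need not act by a strongly continuous representation in any topology, so the averaging should be phrased purely in terms of closed convex hulls and nearest-point projections in $L^2(X,\cal H';\mu)$ rather than via a Haar-type average.
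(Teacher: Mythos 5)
Your proposal is correct in outline, and it recognisably employs the same core idea as the paper — the $\delta$-trick from \cite[Proposition~1.1.5]{bhv}: the hypothesis $\langle\pi(y,x)\xi(x),\xi(y)\rangle\geq\delta$ pins a relevant closed convex set away from the origin, so its unique minimal-norm element is nonzero and, being canonically defined, invariant. But the place where the averaging happens is genuinely different. The paper does it \emph{fibrewise}: for each $x\in A$ it forms
$U_x := \overline{\conv}\{\pi(x,y)\xi(y) : y\in[x]_R\cap A\}\subseteq H^x$,
notes $\pi(y,x)U_x=U_y$, and takes $\eta(x)$ to be the unique minimal-norm element of $U_x$; uniqueness plus orthogonality of $\pi(y,x)$ then gives $\pi(y,x)\eta(x)=\eta(y)$ on $R|_A$ directly, and the bound $\langle\xi(x),v\rangle\geq\delta$ for all $v\in U_x$ gives $\|\eta(x)\|\geq\delta$. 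This is then extended to $X=[A]_R$ by a single Lusin--Novikov selection, exactly as you do in your reduction. You instead perform the averaging \emph{globally}, in $L^2(X,\cal H';\mu)$, over the orbit of a section under the full group $[R]$, and then translate $[R]$-invariance of the minimal-norm element back to a.e.\ fibrewise $\pi$-invariance. That works, but it trades one measurability concern (measurable choice of minimal-norm elements across fibres, which the paper handles via the countability of classes) for another (passing from fixedness under the uncountable group $[R]$ in $L^2$ to a.e.\ fibrewise invariance, typically via Feldman--Moore). Two further remarks. First, the GNS detour in your argument is unnecessary: your kernel $\Phi(y,x)$ is by definition already realised by the given $(\pi,\cal H)$ and $\xi$, so you can run the convex-hull argument on $\xi$ itself and skip the construction of $(\pi',\cal H')$ entirely. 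Second, your justification that $0\notin\overline{\conv}([R]\cdot\eta)$ is phrased somewhat loosely — the displayed inequality $\|\phi\cdot\eta-\eta\|^2\leq 2(1-\delta)$ is equivalent to $\langle\phi\cdot\eta,\eta\rangle\geq\delta$, and it is \emph{this} lower bound on the inner product, propagated linearly to convex combinations, that keeps the closed convex hull away from $0$ (the orbit itself always lies on the unit sphere, so ``the orbit is bounded away from the origin'' is not the point). The paper's fibrewise version makes exactly this step transparent.
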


\begin{proof}
Consider the field $\eta$ on $A$ defined by letting $\eta(x)$ be the unique norm minimum of the set $$
U_x := \overline{\conv}\{\pi(x,y)\xi (y) : y\in [x]_R \cap A\} \subseteq H^x,
$$ 
where $\overline{\conv}$ denotes closed convex hull. We have that the field of sets $(U_x)_x$ is invariant, in the sense that $\pi(y,x) U_x = U_y$ for every $(y,x)\in R\cap (A\times A)$. Therefore, by uniqueness of the norm minimum of closed convex sets and orthogonality of $\pi$ we must have $\eta(y) = \pi(y,x) \eta(x)$ for every $(y,x)\in R\cap (A\times A)$. For every $x\in A$ we also have that $\| \eta^x\| \geq \delta$ since Equation (\ref{eq:prelim1}) implies that for every $v \in U_x$, and in particular for $\eta(x)$, one has $\left<\xi(x), v \right>\geq \delta$. 

We can then extend $\eta$ to an invariant field on the saturation of $A$ by letting $\eta(y) = \pi(y,x)\eta(x)$, for every $y\in X\backslash A$ and some $x\in A$ measurably chosen, say, by the Lusin-Novikov Theorem. By ergodicity, the field $\eta$ is non-null almost surely and hence its normalization proves the lemma.
\end{proof}

Recall that two pmp cbers $(X,R,\mu)$ and $(Y,Q,\nu)$ are \emph{orbit equivalent} if there exists a bijective extension $f\colon X\rightarrow Y$. We say that they are \emph{weakly orbit equivalent} if there exist $A\subseteq X$ and $B \subseteq Y$ non-null subsets such that $(A,R|_A,\mu|_A)$ and $(B,R|_B,\mu|_B)$ are orbit equivalent.

\begin{theorem}[{\cite[Proposition 13]{pichot}}]
  Measured Property (T) is preserved by weak orbit equivalence.
\end{theorem}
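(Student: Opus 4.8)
The plan is to factor a weak orbit equivalence into an isomorphism followed by a compression to a complete section, and to check that measured Property (T) is preserved by each operation. Isomorphism invariance is immediate, so, writing the hypothesis as an orbit equivalence $(A,R|_A,\mu|_A)\cong(B,Q|_B,\nu|_B)$ with $A\subseteq X$ and $B\subseteq Y$ non-null (and both $R$ and $Q$ ergodic, as is implicit in the definition of measured Property (T)), it suffices to prove: if $(X,R,\mu)$ is an ergodic pmp cber and $A\subseteq X$ is non-null, then $R$ has Property (T) if and only if $R|_A$ does. Ergodicity of $R$ forces $A$ to be a complete section ($[A]_R=X$ up to null) and passes to $R|_A$, so all the statements make sense; in the theorem one applies the forward implication to $R$ and the backward one to $Q$. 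A common tool for both directions is a well-chosen Borel section: by the Lusin--Novikov theorem $R\cap(X\times A)$ is a countable union of graphs of partial Borel isomorphisms, so, incorporating $\id_A$, one can build a Borel map $\rho\colon X\to A$ with $\rho|_A=\id_A$ and $(\rho(x),x)\in R$; since $R$-invariance of $\mu$ forces every such partial isomorphism to preserve $\mu$, one can arrange in addition that $\mu(\rho^{-1}(N))=0$ whenever $\mu(N)=0$. The two features we will use are that $\rho$ sends normalised fields over $A$ to normalised fields over $X$, and that, because $R$ has countable classes, $(y,x)\mapsto(\rho(y),\rho(x))$ pushes the canonical measure on $R$ forward to a measure absolutely continuous with respect to the canonical measure on $R|_A$.

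For the backward implication, assume $R|_A$ has Property (T) and let $(\pi,\cal H)$ be a representation of $R$ with a normalised almost invariant sequence $(\xi_n)_n$. Restricting $\cal H$ and the operators $\pi(y,x)$ to pairs $(y,x)\in R|_A$ yields a representation of $R|_A$, and since the canonical measure of $R|_A$ agrees up to normalisation with the restriction of that of $R$, the sequence $(\xi_n|_A)_n$ is still normalised and almost invariant. Property (T) of $R|_A$ then supplies a normalised $\pi$-invariant field $\eta_0$ over $A$, and we set $\eta(x):=\pi(x,\rho(x))\eta_0(\rho(x))$. For almost every $(y,x)\in R$ the points $\rho(x),\rho(y)$ lie in $[x]_R\cap A$, the pair $(\rho(y),\rho(x))$ lies in $R|_A$, and invariance of $\eta_0$ along such pairs (valid almost everywhere by the absolute continuity noted above) gives $\pi(y,x)\eta(x)=\pi(y,\rho(x))\eta_0(\rho(x))=\pi(y,\rho(y))\eta_0(\rho(y))=\eta(y)$. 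Since $\rho$ preserves normalisation, $\eta$ is a non-trivial invariant field for $\pi$, so $R$ has Property (T).

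For the forward implication, assume $R$ has Property (T) and let $(\sigma,\cal K)$ be a representation of $R|_A$ over $A$ with normalised almost invariant fields $(\eta_n)_n$. Induce it to a representation $(\pi,\cal H)$ of $R$ by setting $\cal H^x:=\cal K^{\rho(x)}$ and $\pi(y,x):=\sigma(\rho(y),\rho(x))$; the cocycle identity for $\sigma$ makes this a representation of $R$, and because $\rho|_A=\id_A$ its restriction to $R|_A$ is exactly $(\sigma,\cal K)$. The fields $\widehat\eta_n(x):=\eta_n(\rho(x))$ are normalised, and for almost every $(y,x)\in R$ we have $\|\pi(y,x)\widehat\eta_n(x)-\widehat\eta_n(y)\|=\|\sigma(\rho(y),\rho(x))\eta_n(\rho(x))-\eta_n(\rho(y))\|\to 0$ by the absolute continuity property of $\rho$. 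Hence $\pi$ has non-trivial almost invariant fields, so by Property (T) of $R$ it has a non-trivial invariant field, whose restriction to $A$ is a non-trivial invariant field for $\sigma$. Therefore $R|_A$ has Property (T), which completes the reduction.

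I expect the crux to be the construction and handling of the section $\rho$: it must be Borel, land in the complete section $A$, restrict to the identity there, and be measure-regular in the precise sense that preimages of null sets are null, and then one must verify that almost invariance genuinely transfers under the restriction and induction operations built from it. This is where the probability-measure-preserving hypothesis is really used; once Lusin--Novikov and the measure preservation of partial isomorphisms are available the verification is routine, but it is the part of the argument that demands care.
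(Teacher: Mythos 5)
Your proof is correct. The paper does not itself prove this theorem---it simply cites Pichot's Proposition~13---but your argument (reduce weak orbit equivalence to the claim that, for an ergodic pmp cber, measured Property~(T) passes both ways between $R$ and $R|_A$ for a non-null Borel $A$, and establish both directions via restriction and induction of representations along a null-preserving Borel retraction $\rho\colon X\to A$ built from Feldman--Moore/Lusin--Novikov) is the standard argument and is essentially what the cited reference does.
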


The following result will be the main ingredient in our proof of strong ergodicity for pmp cbers with Property (T).

\begin{theorem}[{\cite[Proposition 16]{pichot}}]
Let $(X,R,\mu)$ be a pmp cber with Property (T). If $R = \cup_{n\in \NN} R_n$, where $R_1 \subseteq R_2 \subseteq\dots\subseteq R_n \subseteq \dots$ is an increasing sequence of subequivalence relations, then $R|_A=R_N|_A$ for some non-null Borel $A\subseteq X$ and $N$ sufficiently large.
\end{theorem}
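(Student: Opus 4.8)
The plan is to transplant to the measured setting the classical argument for groups, where a Property~(T) group exhausted by an increasing chain of subgroups must equal one of them, the obstruction being the direct sum of the associated quasi-regular representations; the extra ingredient is \cite[Theorem~6]{pichot} above, that in a Property~(T) cber a normalized almost invariant sequence stays close, along a subsequence, to a sequence of normalized invariant fields. For each $n$ let $\pi_n$ be the quasi-regular representation of $R$ modulo $R_n$, on the field of Hilbert spaces $x\mapsto\ell^2([x]_R/R_n)$, where $[x]_R/R_n$ denotes the countable set of $R_n$-classes contained in $[x]_R$, and let $\xi_n$ be the normalized field of distinguished basis vectors $\xi_n(x):=\delta_{[x]_{R_n}}$. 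Placed in the $n$-th summand of $\pi:=\bigoplus_n\pi_n$, the sequence $(\xi_n)_n$ is normalized almost invariant: because $R=\bigcup_nR_n$ with the $R_n$ increasing, for $\mu$-a.e.\ $(y,x)\in R$ one has $xR_ny$ for all large $n$, whence $\pi_n(y,x)\xi_n(x)=\xi_n(y)$ exactly and $\|\pi(y,x)\xi_n(x)-\xi_n(y)\|$ is eventually $0$. By \cite[Theorem~6]{pichot}, after passing to a subsequence there are normalized invariant fields $\eta_n=(\eta_{n,m})_m$ for $\pi$ with $\|\xi_n-\eta_n\|\to0$ a.s., where each $\eta_{n,m}$ is invariant for $\pi_m$ and $\sum_m\|\eta_{n,m}(x)\|^2=1$ a.s.

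Since $\|\xi_n-\eta_n\|^2=\|\xi_n-\eta_{n,n}\|^2+\sum_{m\ne n}\|\eta_{n,m}\|^2$ tends to $0$ a.s.\ and $\|\xi_n(x)\|=1$, one gets $\langle\xi_n(x),\eta_{n,n}(x)\rangle\to1$ a.s. The crucial elementary point is that invariance of $\eta_{n,n}$ under $\pi_n$ forces its coefficient at an $R_n$-class $c$ to depend only on $c$ and not on the basepoint: there is a Borel function $E_n$ on $R_n$-classes with $\eta_{n,n}(x)=\sum_{c\subseteq[x]_R}E_n(c)\delta_c$, so that $|\langle\xi_n(x),\eta_{n,n}(x)\rangle|=|E_n([x]_{R_n})|$ and $\sum_{c\subseteq[x]_R}|E_n(c)|^2=\|\eta_{n,n}(x)\|^2\le1$. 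Now I would fix a small $\varepsilon$ and apply Egorov's theorem to obtain a non-null Borel set $X_0$ and an index $N$ (from the subsequence) with $|E_N([x]_{R_N})|^2>1/2$ for every $x\in X_0$. As $\sum_{c\subseteq[x]_R}|E_N(c)|^2\le1$, each $R$-class contains at most one $R_N$-class on which $|E_N|^2>1/2$; hence on the non-null Borel set $A:=\{x:|E_N([x]_{R_N})|^2>1/2\}$, which contains $X_0$, any two $R$-equivalent points lie in a common $R_N$-class. That is exactly $R|_A=R_N|_A$.

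The main difficulty is not this last pigeonhole but the careful construction of the $\pi_n$ as genuine cber representations in the sense defined above — presenting $x\mapsto\ell^2([x]_R/R_n)$ as a measurable field of Hilbert spaces and verifying the measurability and cocycle axioms — which requires some care because the quotient ``$R/R_n$'' is in general not a standard Borel space. Concretely one works on the standard Borel space $R$ with the countable Borel equivalence relation $(a,x)\sim(a',x)\iff aR_na'$, taking $\pi_n(y,x)\delta_{[(a,x)]}:=\delta_{[(a,y)]}$ and $\xi_n(x):=\delta_{[(x,x)]}$, and replaces an honest transversal (which a pmp cber need not have) by a Lusin--Novikov enumeration of $[x]_R$. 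Everything else — the almost invariance of $(\xi_n)$, the direct-sum bookkeeping, the use of Egorov, and the final count — is routine.
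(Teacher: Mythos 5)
Your proof is correct and follows the standard strategy underlying Pichot's Proposition 16 (which the paper cites without reproving): realise each $R_n$ via the quasi-regular representation of $R$ on the field $x\mapsto\ell^2([x]_R/R_n)$, take the direct sum so the distinguished basis vectors $\xi_n(x)=\delta_{[x]_{R_n}}$ form a normalized almost invariant sequence (eventually exactly invariant along a.e.\ edge, since the $R_n$ increase to $R$), invoke \cite[Theorem 6]{pichot} to replace them by nearby normalized invariant fields, observe that invariance of the $n$-th component forces it to be a function of $R_n$-classes with $\ell^2$-norm at most one over each $R$-class, and then Egorov plus the pigeonhole bound $\sum_c|E_N(c)|^2\le1$ extracts a non-null Borel set $A$ on which each $R$-class meets a single $R_N$-class. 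The measurability caveat about $[x]_R/R_n$ not being standard Borel is real, and your workaround (the cber on $R$ given by $(a,x)\sim(a',x)\iff aR_na'$ plus a Lusin--Novikov enumeration) is the right fix; everything else checks out.
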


To conclude this section, we point out the following corollary of non-approximability. This corollary justifies our restriction to bounded degree graphings throughout the paper. 

\begin{corollary}[{\cite[Corollary 18]{pichot}}]
Pmp cbers with Property (T) admit bounded degree graphings.
\end{corollary}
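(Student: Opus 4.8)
The plan is to deduce this from the non-approximability theorem for increasing unions of subequivalence relations stated just above. Let $(X,R,\mu)$ be a pmp cber with Property~(T); recall this forces $R$ to be ergodic. If a.e.\ $R$-class is finite then, $R$ being ergodic, all classes share a common finite size $k$, and $R$ minus its diagonal is itself a graphing of degree $k-1$, so we may assume $R$ is aperiodic. Fix any graphing of $R$; by Feldman--Moore we may write $R$ as the equivalence relation generated by a countable family $\{\varphi_i\}_{i\ge 1}$ of Borel partial isomorphisms whose graphs lie in $R$. For $n\in\NN$ let $\mathcal{G}_n$ be the Borel graph on $X$ with edge set $\{\{x,\varphi_i(x)\}:1\le i\le n\}$ and put $R_n:=R_{\mathcal{G}_n}$. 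Then $\deg_{\mathcal{G}_n}\le 2n$ everywhere, $R_1\subseteq R_2\subseteq\cdots$ is an increasing sequence of subequivalence relations, and $\bigcup_n R_n=R$.

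Applying the non-approximability theorem produces a non-null Borel set $A\subseteq X$ and an $N\in\NN$ with $R|_A=R_N|_A$. The key observation is that the bounded-degree graphing $\mathcal{G}_N$ already lives on all of $X$, so one only needs to reconnect, with bounded degree, the various $R_N$-classes sitting inside a common $R$-class. By ergodicity $A$ is a complete section of $R$, so a standard construction yields a Borel graph $\mathcal{G}'\subseteq R$ with $\deg_{\mathcal{G}'}\le 2$ each of whose connected components meets $A$: for instance, take a Borel retraction $\theta\colon X\setminus A\to A$ with $\theta(x)\mathrel{R}x$ and, using a Borel enumeration of the $R$-classes coming from a Feldman--Moore generating family, join the elements of each fibre $\theta^{-1}(a)$ into a single line anchored at $a$ (equivalently, use a Rokhlin tower over $A$).

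I then claim $\widetilde{\mathcal{G}}:=\mathcal{G}_N\cup\mathcal{G}'$ is a graphing of $R$; since $\deg_{\widetilde{\mathcal{G}}}\le 2N+2$, this proves the corollary. Indeed $\widetilde{\mathcal{G}}\subseteq R$, so $\widetilde{\mathcal{G}}$-connected vertices are $R$-equivalent. Conversely let $x\mathrel{R}y$. The $\mathcal{G}'$-component of $x$ contains some $a\in A$ and that of $y$ contains some $b\in A$, with $a\mathrel{R}x$ and $b\mathrel{R}y$; hence $a\mathrel{R}b$ with $a,b\in A$, so $a\mathrel{R_N}b$ because $R|_A=R_N|_A$, and therefore $a$ and $b$ lie in a single $\mathcal{G}_N$-component. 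Chaining $x\sim_{\mathcal{G}'}a\sim_{\mathcal{G}_N}b\sim_{\mathcal{G}'}y$ exhibits $x$ and $y$ as $\widetilde{\mathcal{G}}$-connected, so for a.e.\ $x$ the vertex set of $\widetilde{\mathcal{G}}_x$ is exactly $[x]_R$.

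The substantive input here is the non-approximability theorem itself, which we are allowed to quote; the only thing requiring care on our side is the transfer step, i.e.\ the realisation that one should run the increasing-union theorem on a Feldman--Moore exhaustion by bounded-degree subgraphings, together with the (routine but genuinely necessary) fact that a complete section of an aperiodic cber can be reached by a degree-$\le 2$ Borel graph. I do not expect a serious obstacle beyond checking measurability in that last construction and the reduction to the aperiodic ergodic case.
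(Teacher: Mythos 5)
The paper states this as a citation of \cite[Corollary 18]{pichot} without reproducing a proof, and your argument is a correct derivation of it from the preceding non-approximability theorem: truncate a Feldman--Moore exhaustion of $R$ to obtain an increasing chain of bounded-degree subgraphings $\mathcal{G}_n$ and subequivalence relations $R_n$, apply non-approximability to find a non-null $A$ and $N$ with $R|_A = R_N|_A$, and then adjoin a degree-$\leq 2$ Borel ``anchoring'' graph $\mathcal{G}'$ linking every point to $A$, so that $\mathcal{G}_N \cup \mathcal{G}'$ is a bounded-degree graphing of $R$. The measurability and degree bookkeeping you flag (Borel retraction onto the complete section $A$, Borel linear ordering of the fibres via a Feldman--Moore enumeration to build the line structure, reduction to the ergodic aperiodic case) are all routine and resolvable exactly as you indicate.
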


\subsection{Unimodular random graphs}

We now review the necessary background from the theory of unimodular random graphs. For a detailed introduction to the topic the reader is referred to \cite{AldousDJ}.

Throughout this section we fix some $D\in \NN$. For $d \in \NN$ we let $M^d$ be the set of all isoclasses of rooted connected locally finite graphs with degree bounded by $D$ and vertices labelled by elements of $[d]$. We will denote elements of $M^d$ specifying a representative of their class by a pair $(G,u)$, where $G$ is a coloured graph as above and $u \in V(G)$. We will denote $M:=M^1$ and think of the latter as uncoloured graphs.

We endow $M^d$ with the following compact metric:
\[
d((G,u),(H,v)) = \inf\{2^{-r} : B_{(G,u)}(r) \cong B_{(H,v)} (r)\},
\]
where $B_{(G,u)} (r)$ denotes the coloured rooted graph obtained by restricting $(G,u)$ to the ball of radius $r$ around the root $u$ and $\cong$ denotes isomorphism of rooted coloured graphs. 

We further define $M_\to^d$ to be the set of isoclasses of triples $(G,u,v)$ where $(G,u)\in M^d$ and $v$ is a neighbour of $u$ in $G$.  A \emph{decorated unimodular random graph (DURG)} is a probability measure $\Lambda$ on $M^d$ satisfying the following mass transport principle: for every $f \colon M_\to^d \to \RR_{\geq 0}$, $$
\int_{M^d} \sum_{(u,v)\in E(G)} f(G,u,v) d\Lambda (G,u) = \int_{M^d} \sum_{(u,v)\in E(G)} f(G,v,u) d\Lambda (G,u). 
$$
In the special case where $d=1$ we simply refer to $\Lambda$ as a \emph{unimodular random graph (URG)}.

We say that $\Lambda'$ is a \emph{DURG over the URG} $\Lambda$ if the URG one gets by forgetting the colours of $\Lambda'$ is $\Lambda$. In the special case where $\Lambda$ is the Cayley graph of a fixed group $\Gamma$, one can identify a DURG over $\Lambda$ with an invariant random $d$-colouring of $\Gamma$. 

Let $(\cal G,\mu)$ be a graphing and $\pi = \bigsqcup_{i=1}^d \pi_i$ a partition of its vertex space $X_{\cal G}$. There is a sampling map $\sigma \colon X_{\cal G} \to M^d$ obtained by letting $\sigma (x)$ be the $\cal G$-connected component of $x$ rooted at $x$  coloured by the corresponding indices of the partition. Aldous-Lyons \cite{AldousDJ} show the following.

\begin{proposition}\label{prop:realiza}
    Let $(\cal G,\mu)$ be a graphing and $\pi$ a partition as above. Then the pushforward $\sigma_* \mu$ is a DURG. Conversely, for every DURG $\Lambda$ there exists a graphing $(\cal G,\mu)$ together with a partition $\pi$ satisfying that $\sigma_* \mu = \Lambda$. 
\end{proposition}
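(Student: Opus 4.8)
We prove the two implications separately.

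\textbf{The measure $\sigma_\ast\mu$ is a DURG.} Since $\sigma$ is only defined when $\cal G$ has degree at most $D$, we assume this. First, $\sigma\colon X_{\cal G}\to M^d$ is Borel: for each radius $r$ the isomorphism type of the coloured ball $B_{\sigma(x)}(r)$ is a Borel function of $x$ (as $\cal G$ and $\pi$ are Borel), and by the definition of the metric on $M^d$ these balls determine $\sigma(x)$. Hence $\sigma_\ast\mu$ is a Borel probability on $M^d$, and we must check its mass transport principle. Given Borel $f\colon M_\to^d\to\RR_{\geq 0}$, define $\tilde f\colon R_{\cal G}\to\RR_{\geq 0}$ by $\tilde f(x,y)=f([\cal G_x,x,y])$ if $(x,y)\in\cal G$ and $\tilde f(x,y)=0$ otherwise; this is a well-defined Borel function on $R_{\cal G}$ because $\cal G_x=\cal G_y$ whenever $xR_{\cal G}y$. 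Apply the mass transport principle of the pmp cber $(X_{\cal G},R_{\cal G},\mu)$ to $\tilde f$. The left-hand side, $\int_{X_{\cal G}}\sum_{yR_{\cal G}x}\tilde f(x,y)\,d\mu(x)=\int_{X_{\cal G}}\sum_{y\sim x}f([\cal G_x,x,y])\,d\mu(x)$, is exactly the left-hand side of the DURG mass transport principle for $\sigma_\ast\mu$ after the change of variables $x\mapsto\sigma(x)$; and the right-hand side, $\int_{X_{\cal G}}\sum_{yR_{\cal G}x}\tilde f(y,x)\,d\mu(x)$, matches the corresponding right-hand side once one notes that re-rooting $\cal G_x$ at a neighbour $y$ of $x$ produces precisely $\sigma(y)$. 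So $\sigma_\ast\mu$ is a DURG.

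\textbf{Every DURG is realised.} Let $\Lambda$ be a DURG on $M^d$. We use the re-rooting construction, first breaking graph automorphisms with extra randomness. Let $X$ be the standard Borel space of rooted graphs equipped, besides the $[d]$-colouring, with a mark in $[0,1]$ at every vertex --- elements $(G,u,\omega)$ with $(G,u)\in M^d$ and $\omega\colon V(G)\to[0,1]$ --- and let $\mu$ be the law of $(G,u,\omega)$ obtained by sampling $(G,u)\sim\Lambda$ and attaching i.i.d.\ uniform marks; adding i.i.d.\ marks preserves unimodularity, so $\mu$ satisfies the mass transport principle of a unimodular random (marked) network. Let $\cal G$ be the Borel graph on $X$ joining $(G,u,\omega)$ to $(G,v,\omega)$ whenever $v$ is a $G$-neighbour of $u$; it has degree at most $D$, and the $\cal G$-component of $(G,u,\omega)$ is $\{(G,w,\omega):w\in V(G)\}$. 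For $\mu$-a.e.\ point the marks $\omega$ are injective, so $w\mapsto(G,w,\omega)$ is a bijection onto this component and a coloured-graph isomorphism from $G$ onto it; in particular $(X,R_{\cal G},\mu)$ is a pmp cber, the $R_{\cal G}$-invariance of $\mu$ being the mass transport principle for $\mu$ in its ``all vertices'' form (equivalent to the defining neighbour form; see below), since summation over a $\cal G$-component is summation over $V(G)$. Finally take the Borel partition $\pi=\bigsqcup_{i=1}^d\pi_i$ with $\pi_i=\{(G,u,\omega): u \text{ has colour } i \text{ in } G\}$. The isomorphism above identifies $\sigma(G,u,\omega)$, the $\cal G$-component rooted at $(G,u,\omega)$ and coloured by $\pi$, with $(G,u)$ carrying its original colouring, the marks being forgotten because $\pi$ ignores them; hence $\sigma_\ast\mu=\Lambda$.

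\textbf{Main obstacle.} There is little conceptual difficulty; care is needed at two points. First, the descriptive set theory: that $X$ is standard Borel, and that $\sigma$, $\cal G$, and the re-rooting map are Borel. This is routine but should be spelled out. Second, in the converse one needs the mass transport principle in the form with summation over \emph{all} vertices of the component, whereas the definition of a DURG gives only the form with summation over neighbours of the root; the equivalence of these two forms for connected locally finite graphs is the one genuine lemma used (it is standard --- one routes mass along geodesics --- and is contained in Aldous--Lyons \cite{AldousDJ}). The auxiliary $[0,1]$-marks are what make the $\cal G$-component of $(G,u,\omega)$ faithfully a copy of $G$ rather than a quotient by $\Aut(G)$, so that $\sigma$ recovers $(G,u)$ on the nose.
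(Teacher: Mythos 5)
The paper does not prove this proposition; it is attributed to Aldous--Lyons (with the corresponding fact for URGs also credited to Lov\'asz, Theorem~18.37), so there is no in-text argument to compare against. Your reconstruction is correct and follows the standard route. A few remarks.

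The forward direction is clean: $\sigma$ is Borel because each finite-radius coloured ball is a Borel function of $x$, and transferring the pmp cber mass transport principle through $\tilde f(x,y) = f([\cal G_x, x, y])\one_{(x,y)\in\cal G}$ gives exactly the DURG mass transport principle for $\sigma_\ast\mu$, using $\cal G_x = \cal G_y$ for $xR_{\cal G}y$.

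For the converse you correctly identify the one real issue: the naive re-rooting construction on $M^d$ fails when the sampled graph has nontrivial automorphisms, since the $\cal G$-component of $(G,u)$ would be a quotient of $G$ by $\Aut(G)$ rather than $G$ itself; attaching i.i.d.\ $[0,1]$ marks rigidifies a.e.\ sample. You also correctly flag that verifying $R_{\cal G}$-invariance of $\mu$ needs the mass transport principle in its ``sum over all vertices of the component'' form, whereas the defining form sums only over neighbours of the root; the equivalence of these two forms for connected locally finite (marked) graphs is exactly the content of the Aldous--Lyons involutive invariance characterisation and is the nontrivial ingredient. Your description of $\cal G$ as a relation on isomorphism classes is slightly informal (adjacency should be phrased in terms of representatives, which is Borel because the mark injectivity lets one canonically identify representatives), but this is exactly the kind of descriptive-set-theoretic bookkeeping you already flag as routine. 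Overall the argument is sound and matches what the cited sources do.
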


A graphing $(\cal G, \mu)$ as in the above proposition is called a \emph{realization} of the DURG $\Lambda$.

The set $M^d$ is the unit space of a groupoid whose operation is defined by re-rooting. A DURG $\Lambda$ is \emph{ergodic} if and only if its only invariant events are trivial, that is, if $A\subseteq M^d$ is Borel and invariant under re-rooting then $\Lambda (A) = 0,1$. Again, Aldous-Lyons show that a DURG $\Lambda$ is ergodic if and only if it is \emph{extremal}: if $\Lambda_1$, $\Lambda_2$ are DURGs such that $\Lambda = t \Lambda_1 + (1-t)\Lambda_2$ with $0<t<1$, then $\Lambda = \Lambda_1 = \Lambda_2$.

Ergodic DURGs admit realizations by ergodic graphings. Indeed, if $\Lambda$ is an ergodic DURG and $(\cal G,\mu)$ a realization of it with ergodic decomposition $\mu = \int_Z \mu_z d\nu(z)$, then extremality ensures that $\Lambda = \sigma_* \mu = \sigma_* \mu_z$ for $\nu$-almost every $z \in Z$. 

Let $f\colon M^d \to \RR$ be a continuous function. We let $f^\to \colon M_\to^d \to \RR$ be the function defined by $$
f^\to (G,u,v) = f(G,u) - f(G,v).
$$
Similarly, we associate a finite measure on $M^d_\to$ to a dURG $\Lambda$ by letting $$
\int_{M^d_\to} g d\Lambda^\to := \int_{M^d} \sum_{(u,v)\in E(G)} g(G,u,v) d\Lambda (G,u),
$$
for every Borel $g \colon M^d_\to \to \RR$.

\begin{proposition}\label{prop:est}
    Let $f \colon M^d \to \RR$ be a continuous function and $\Lambda$ a dURG. Then $f^\to$ is continuous and $$
    \|f^\to\|_{1,\Lambda^\to} \leq 2D \|f\|_{1,\Lambda}.
    $$
\end{proposition}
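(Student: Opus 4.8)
The plan is to reduce the statement to two separate claims: that $f^\to$ is continuous, and that the $L^1$ bound holds. For continuity, I would argue that if $f$ is continuous on the compact metric space $M^d$, then $f$ is uniformly continuous, so given $\varepsilon > 0$ there is $r$ such that agreement of radius-$r$ balls forces the values of $f$ to be within $\varepsilon$. Now if $(G,u,v)$ and $(H,u',v')$ are close in $M_\to^d$, then their radius-$(r+1)$ balls agree as bi-rooted graphs, which forces the radius-$r$ balls around $u$ and $u'$ to agree, and likewise around $v$ and $v'$; hence $|f(G,u) - f(H,u')| < \varepsilon$ and $|f(G,v) - f(H,v')| < \varepsilon$, so $|f^\to(G,u,v) - f^\to(H,u',v')| < 2\varepsilon$. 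This gives continuity of $f^\to$.

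For the norm bound, I would unwind the definition of $\|f^\to\|_{1,\Lambda^\to}$ using the finite measure $\Lambda^\to$ on $M_\to^d$ associated to $\Lambda$:
\[
\|f^\to\|_{1,\Lambda^\to} = \int_{M^d_\to} |f^\to| \, d\Lambda^\to = \int_{M^d} \sum_{(u,v)\in E(G)} |f(G,u) - f(G,v)| \, d\Lambda(G,u).
\]
By the triangle inequality this is at most $\int_{M^d} \sum_{(u,v)\in E(G)} \bigl(|f(G,u)| + |f(G,v)|\bigr) d\Lambda(G,u)$. The first sum contributes $\int_{M^d} \deg_G(u) |f(G,u)| \, d\Lambda(G,u) \leq D \int_{M^d} |f(G,u)| \, d\Lambda(G,u) = D\|f\|_{1,\Lambda}$, since the degree is bounded by $D$. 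For the second sum, I would apply the mass transport principle of the DURG: the function $g(G,u,v) = |f(G,v)|$ (more precisely the function whose value at $(G,u,v)$ is $|f|$ evaluated at $(G,v)$, which is Borel on $M_\to^d$) satisfies, by the MTP,
\[
\int_{M^d} \sum_{(u,v)\in E(G)} g(G,u,v)\, d\Lambda(G,u) = \int_{M^d} \sum_{(u,v)\in E(G)} g(G,v,u)\, d\Lambda(G,u) = \int_{M^d} \deg_G(u)|f(G,u)| \, d\Lambda(G,u),
\]
which is again at most $D\|f\|_{1,\Lambda}$. Adding the two contributions yields the bound $2D\|f\|_{1,\Lambda}$.

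The only genuinely delicate point is making sure the function to which we apply the mass transport principle is genuinely of the form $M_\to^d \to \RR_{\geq 0}$ and Borel — i.e., that "$|f|$ evaluated at the far endpoint of the edge" is well-defined on isomorphism classes of bi-rooted graphs and measurable; this is routine since swapping the two roots is a continuous involution of $M_\to^d$ and $f$ is continuous, so $(G,u,v) \mapsto |f(G,v)|$ is the composition of that involution with $(G,u,v)\mapsto |f(G,u)|$, which factors through the continuous forgetting map $M_\to^d \to M^d$. I do not anticipate any real obstacle beyond keeping this bookkeeping clean.
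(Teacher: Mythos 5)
Your proof is correct and follows essentially the same route as the paper: triangle inequality, the bounded-degree bound for the $|f(G,u)|$ term, and an application of the mass transport principle (which the paper calls ``unimodularity'') for the $|f(G,v)|$ term. The only difference is that you spell out the continuity argument where the paper declares it routine.
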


\begin{proof}
    It is routine to check continuity. The norm estimate follows from unimodularity: \begin{align*}
        \|f^\to\|_{1,\Lambda^\to} &= \int_{M^d} \sum_{(u,v)\in E(G)} |f(G,v) - f(G,u)| d\Lambda (G,u)\\
        &\leq \int_{M^d} \sum_{(u,v)\in E(G)} |f(G,v)| + |f(G,u)| d\Lambda (G,u)\\
        &\leq D\|f\|_{1,\Lambda} + \int_{M^d} \sum_{(u,v)\in E(G)} |f(G,v)|  d\Lambda (G,u) \\
        &\leq 2D\|f\|_{1,\Lambda}.\qedhere     
    \end{align*}
\end{proof}

\subsection{Point processes on locally compact groups}\label{PPprelims}

We review some of the basic terminology of point processes. For a self-contained account using the same notation, see \cite{AbertMellick}. See also \cite{LastPenrose}, \cite{baccelli}, \cite{dvj1}, \cite{dvj2}. 

Let $G$ be a locally compact second countable unimodular (lcscu) group. Such a group admits a left-invariant proper metric $d$ and a left-invariant Haar measure $\lambda$. Its \emph{configuration space} is
\[
    \MM := \{ \omega \subseteq G \mid \omega \text{ is locally finite} \}.
\]
More generally, if $\Xi$ is a complete and separable metric space then we define the \emph{$\Xi$-marked configuration space} on $G$ by 
\[
    \Xi^{\MM} := \{ \omega \subseteq G \times \Xi \mid \omega \text{ is locally finite and uniquely marked} \},
\]
where $\omega \in \Xi^{\MM}$ is \emph{uniquely marked} if for all $(g, \xi) \in \omega$, if $(g, \xi') \in \omega$ then $\xi = \xi'$. In other words, we think of a $\Xi$-marked configuration as a subset of $G$ where each point has a ``mark'' or ``label'' from $\Xi$. The $\Xi$-marked configuration space is a complete and separable metric space in its own right. There is a natural action $G \acts \Xi^{\MM}$, induced by the shift action $G \acts G \times \Xi$, where the action on the second coordinate is trivial.

A \emph{point process on $G$} is a Borel probability measure $\mu$ on $\MM$. More generally, one can consider $\Xi$-marked point processes, which are Borel probability measures on $\Xi^{\MM}$. All of the definitions below apply equally well for marked point processes, in the interests of brevity we simply state them for unmarked point processes. We say $\mu$ is \emph{invariant} if $G \acts (\MM, \mu)$ is a probability measure preserving (pmp) action. The \emph{intensity} of an invariant point process $\mu$ is
\[
    \intensity(\mu) = \frac{1}{\lambda(U)}\int_{\MM} |\omega \cap U| d\mu(\omega),
\]
where $U \subseteq G$ is a subset with $\lambda(U) = 1$ (the intensity does not depend on this choice by the uniqueness of Haar measure).

A point process $\mu$ is \emph{essentially free} (or simply \emph{free}) if $\stab(\omega)$ is trivial for $\mu$ almost every $\omega \in \MM$. 

For reasons of notational coherence, we will denote the identity of $G$ by $0$, despite the fact that $G$ will rarely be an abelian group. We wish to make sense of conditioning an invariant point process $\mu$ on containing $0$. A priori this makes no sense, as this is an event of zero probability. However, one can do this rigorously by \emph{Palm theory}.

We define the \emph{rooted configuration space} as
\[
    \MM_0 = \{ \omega \in \MM \mid 0 \in \omega \},
\]
and we will sometimes refer to its elements as being \emph{rooted at $0 \in G$}. We also define the $\Xi$-marked rooted configuration space as
\[
    \Xi^{\MM_0} = \{ \omega \in \Xi^{\MM} : \exists \xi \in \Xi \text{ such that } (0, \xi) \in \omega \}.
\]
The \emph{Palm measure} associated to an invariant point process $\mu$ of finite, nonzero intensity is the measure $\mu_0$ on $\MM_0$ given by
\[
    \mu_0(A) = \frac{1}{\intensity(\mu)} \int_{\MM} \#\{g \in U \mid g^{-1}\omega \in A \} d\mu(\omega),
\]
where $U \subseteq G$ is of unit volume.

\begin{example}
The most fundamental example of an invariant point process is the Poisson point process of intensity $t$. For its definition and basic properties see \cite{LastPenrose}. It defines an essentially free pmp action of $G$. We denote its distribution by $\mathcal{P}_t$. Let $\alpha : \MM \to \MM_0$ be the ``root adding'' map given by $\alpha(\omega) = \omega \cup \{0\}$. Then the Palm measure of the Poisson point process is simply $\alpha_* \mathcal{P}$ (in fact, this characterises the Poisson point process).
\end{example}

There is a natural rerooting equivalence relation $\cal R$ on $\MM_0$, given by restricting the orbit equivalence relation of $G \acts \MM$ to $\MM_0$. Explicitly, one declares that $\omega \cal R g^{-1} \omega$ for all $g \in \omega$. 

If $\mu$ is an invariant point process, then we refer to the triple $(\MM_0, \cal R, \mu_0)$ as its \emph{Palm equivalence relation}. One can show that this is a quasi-pmp cber, and moreover pmp if the ambient group $G$ is unimodular, see Section 3.3 of \cite{AbertMellick}. Moreover, the equivalence relation is aperiodic if and only if the ambient group is noncompact.

Given a configuration $\omega$, the \emph{Voronoi tessellation} associated to it is the family of sets $\{V_g(\omega)\}_{g \in \omega}$ defined by
\[
    V_g(\omega) = \{ x \in G \mid d(g, x) \leq d(h, x) \text{ for all } h \in \omega \}.
\]
Strictly speaking, the Voronoi tessellation as defined above forms a cover of $G$ by closed sets. This can be further refined to a genuine partition into measurable sets by using a ``tie-breaking'' function. For further information, see Definition 3.15 of \cite{AbertMellick} and the discussion following it. We will work with that tessellation, and abuse notation slightly by suppressing the tie-breaking function. 

Note that the Voronoi tessellation is measurably and equivariantly defined: we have $V_{\gamma g}(\gamma \omega) = \gamma V_g(\omega)$.

Given $g \in G$, we may define an element $X_g(\omega)$ to be the unique element of $\omega$ such that $g$ is in the Voronoi cell of $X_g(\omega)$ with respect to $\omega$. Again, an equivariance property holds: $X_{\gamma g}(\gamma \omega) = \gamma X_g(\omega)$. For the purposes of working with representations, it will be convenient to introduce the following notation:
\[
    \omega_g := X_g(\omega)^{-1} \omega.
\]
In words, $\omega_g$ is the configuration that results from rooting $\omega$ at the point of $\omega$ whose cell contains $g$. Observe that the following identity is satisfied: $$
\omega_g = (g^{-1} \omega)_0.
$$ 
Thus we have defined for each $g \in G$ a map $\bullet_g : \MM \to \MM_0$, which we refer to as \emph{rooting at the cell containing $g$}.

The rooted configuration space is endowed with a countable Borel equivalence relation $\cal R$ defined by re-rooting, that is, two rooted configurations $\omega,\omega'\in \MM$ are $\cal R$ related if and only if there exists $g\in \omega$ such that $g\omega = \omega'$. 

The Palm measure of an invariant point process $\mu$ is a random configuration rooted at $0 \in G$. But we can also consider $\mu$ rooted at the cell containing the identity (that is, its pushforward under the map $\omega \mapsto \omega_0$). These two measures are different in general, but in the same measure class, as is expressed in the following formula:

\begin{theorem}[Voronoi Inversion Formula]
    Let $G$ be a lcsc group and $\mu$ a invariant point process. Then, for any Borel function $f\colon \MM \rightarrow \RR$, the following equality holds $$
    \int_{\MM} f d\mu = \intensity (\mu)\int_{\MM_0} \int_{V_0 (\omega)} f(g\omega) dh (g) d\mu_0 (\omega)
    $$
\end{theorem}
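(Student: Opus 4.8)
The plan is to derive this from the refined Campbell--Mecke formula for the Palm measure $\mu_0$, using the tie-broken Voronoi tessellation as a measurable, equivariant way of selecting a fundamental domain for the re-rooting action inside each orbit. Throughout I take as given that the configuration is nonempty $\mu$-almost surely (automatic, for instance, whenever $\mu$ is ergodic of positive intensity), as the formula genuinely needs this.

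First I would record the form of the Campbell--Mecke formula to be used: for every Borel $F \colon G \times \MM_0 \to \RR_{\geq 0}$,
\[
\intensity(\mu)\int_{\MM_0}\int_G F(g,\omega)\, d\lambda(g)\, d\mu_0(\omega) \;=\; \int_{\MM} \sum_{g \in \omega} F\!\left(g,\, g^{-1}\omega\right) d\mu(\omega).
\]
For $F = \one_U \otimes \one_A$ with $\lambda(U)=1$ this reduces to the definition of $\mu_0$ (note that the summand forces $g \in \omega$ whenever $g^{-1}\omega \in A \subseteq \MM_0$). The general case follows by the standard monotone class argument: one first observes that $B \mapsto \int_{\MM} \#\{g \in B : g^{-1}\omega \in A\}\, d\mu(\omega)$ is a translation-invariant measure on $G$, hence a multiple of $\lambda$, which upgrades $\one_U$ to an arbitrary nonnegative weight on $G$ using the $G$-invariance of $\mu$, and then one upgrades $\one_A$ in the $\MM_0$-variable. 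Alternatively this can simply be cited from \cite{LastPenrose} or Section~3.3 of \cite{AbertMellick}.

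Next I would rewrite $f$ using the Voronoi partition. For $\mu$-a.e.\ $\eta \in \MM$ the sets $\{V_x(\eta)\}_{x \in \eta}$ form a genuine partition of $G$ --- this is exactly what incorporating the tie-breaking function into the tessellation buys us --- so, $\eta$ being locally finite and nonempty, there is exactly one $x \in \eta$ with $0 \in V_x(\eta)$, and hence $f(\eta) = \sum_{x \in \eta} \one[\,0 \in V_x(\eta)\,]\, f(\eta)$ almost surely. I would then apply the Campbell--Mecke formula to $F(g,\omega) := \one[\,0 \in V_g(g\omega)\,]\, f(g\omega)$, which is Borel by measurability of the tessellation and satisfies $F(g,\, g^{-1}\eta) = \one[\,0 \in V_g(\eta)\,]\, f(\eta)$; the equivariance $V_g(g\omega) = g\, V_0(\omega)$ lets us rewrite $\one[\,0 \in V_g(g\omega)\,] = \one[\,g^{-1} \in V_0(\omega)\,]$, and the formula yields
\[
\int_{\MM} f\, d\mu \;=\; \intensity(\mu)\int_{\MM_0}\int_G \one[\,g^{-1} \in V_0(\omega)\,]\, f(g\omega)\, d\lambda(g)\, d\mu_0(\omega).
\]
A change of variables $g \mapsto g^{-1}$ in the inner integral, legitimate because $G$ is unimodular and hence $\lambda$ is inversion-invariant, converts the domain $\{g : g^{-1} \in V_0(\omega)\}$ into $V_0(\omega)$ and brings the expression to the form in the statement; the extension from $f \geq 0$ to general Borel $f$ is the usual $f = f^+ - f^-$.

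I do not expect a serious obstacle: morally the theorem is just the Campbell--Mecke formula evaluated on the Voronoi fundamental domain. The two points needing care are (a) promoting the bare definition of $\mu_0$ to the displayed formula above, via the monotone class theorem together with the invariance of $\mu$, and (b) the measurability and exact-partition properties of the tie-broken Voronoi tessellation, which are precisely what allow us to write $1 = \sum_{x \in \eta}\one[\,0 \in V_x(\eta)\,]$ and to know that the integrand $F$ is Borel. Keeping the left/right translation and inversion conventions consistent throughout (and tracking where unimodularity is invoked) is the thing most likely to go wrong in a careless write-up.
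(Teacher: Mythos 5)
The paper states the Voronoi Inversion Formula without proof, citing it as standard Palm theory (cf.\ \cite{AbertMellick}, \cite{LastPenrose}), so there is no in-paper argument to compare against. Your approach --- Mecke's formula plus the tie-broken Voronoi tessellation as a measurable ``partition of unity'' $1 = \sum_{x\in\eta}\one[0\in V_x(\eta)]$, followed by equivariance and an inversion change of variables --- is exactly the standard derivation, and the structure of the argument is sound, including the reduction of Campbell--Mecke to the definition of $\mu_0$ by monotone class/translation invariance.

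There is one concrete slip at the very end that you should not paper over. Starting from $F(g,\omega)=\one[0\in V_g(g\omega)]f(g\omega)$, your display is correct:
\[
\int_{\MM} f\, d\mu \;=\; \intensity(\mu)\int_{\MM_0}\int_G \one[\,g^{-1}\in V_0(\omega)\,]\, f(g\omega)\, d\lambda(g)\, d\mu_0(\omega).
\]
After the substitution $g\mapsto g^{-1}$ the domain becomes $V_0(\omega)$, but the integrand becomes $f(g^{-1}\omega)$, \emph{not} $f(g\omega)$. So your argument proves
\[
\int_{\MM} f\, d\mu \;=\; \intensity(\mu)\int_{\MM_0}\int_{V_0(\omega)} f(g^{-1}\omega)\, d\lambda(g)\, d\mu_0(\omega),
\]
which differs from the printed statement by an inverse. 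This is not a defect of your proof: a direct sanity check (e.g.\ $G=\RR$, $\mu$ a random translate of a non-symmetric periodic configuration, $f=\one[d(0,\omega)<\eps]$) shows the $f(g^{-1}\omega)$ version is the correct one, and the paper's own subsequent use of the formula in the proof of Theorem~\ref{TForGroupImpliesTForPalm} (the computation $\|\tilde\xi_n\|^2=\intensity(\mu)\int_{\MM_0}\lambda(V_0(\omega))\|\xi_n(\omega)\|^2 d\mu_0$) relies on $(g^{-1}\omega)_0=\omega$ for $g\in V_0(\omega)$, which is exactly what the $f(g^{-1}\omega)$ version gives. The printed $f(g\omega)$ is therefore a typo. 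The fix in your write-up is simply to state the conclusion with $f(g^{-1}\omega)$ (equivalently, integrate $f(g\omega)$ over $V_0(\omega)^{-1}$) and note the discrepancy with the statement, rather than asserting that the substitution ``brings the expression to the form in the statement.''
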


An immediate consequence of the Voronoi Inversion Formula is that the expected volume of the cell containing the identity (with respect to the Palm measure $\mu_0$) is $\intensity(\mu)^{-1}$. One can show that this implies that $\mu$ almost surely, every Voronoi cell has finite volume.

The \emph{cost} of an invariant point process $\mu$ is defined by
\[
    \cost(\mu) - 1 := \intensity(\mu) (\cost(\MM_0, \cal R, \mu_0) - 1).
\]
It is shown in \cite{AbertMellick} that cost as defined above is an isomorphism invariant. Note that the above definition can be interpreted as an infinitesimal version of the induction formula of Gaboriau. 

The \emph{cost} of a nondiscrete lcscu group $G$ is then defined as
\[
    \cost(G) := \inf \cost(\mu),
\]
where $\mu$ varies over the essentially free point processes on $G$.

    It was known in the community for some time that there was a way to extend the definition of cost to lcscu groups via the means of ``cross-sections'', which we will soon define. This method first appears in implicitly in \cite[Proposition 4.3]{KPV}, where it is described as part of folklore, and then explicitly in \cite{Carderi}. This method is essentially equivalent to the point process version given above, as described in \cite[Section 3.5]{AbertMellick}. The point process perspective and techniques have proved to be useful in proving fixed price, as in \cite{AbertMellick}, \cite{FMW}, and \cite{Mellick2023}.

Let $G \acts (X, \mu)$ be a pmp action of an lcsc group. A Borel set $Y \subseteq X$ is a \emph{cross-section} for the action if there exists a neighbourhood of the identity $U \subseteq G$ such that the map $(u, y) \mapsto uy$ is injective and $\mu(X \setminus GY) = 0$. Cross sections always exist -- for history and further information, see Section 4.2 of \cite{kechriscber}.

Given an essentially free pmp action $G \acts(X, \mu)$, a choice of cross-section $Y \subseteq X$ gives the ``orbit viewing map'' $V : X \to \MM$, given by
\[
    V(x) = \{ g \in G : g^{-1}x \in Y \}.
\]
One can check that this map is equivariant, and thus the pushforward $V_* \mu$ is a point process. The uniform separation set $U \subseteq G$ for the cross-section guarantees that this process has finite intensity. Cross-sections have the structure of a quasi-pmp cber (pmp when the group is unimodular), and under the map $V$ this can be identified with the Palm equivalence relation of $V_* \mu$. 

It is known that that class-bijective extensions of a cross-section equivalence relation for an action $G \acts (X, \mu)$ arise as the cross-section equivalence relation of some extension $G \acts (\tilde{X}, \tilde{\mu})$ of the action. For a proof, see Proposition 8.3 of \cite{BowenHoffIoana}.

\subsection{Gaussian Hilbert spaces}

In this subsection we will briefly recall some basic facts about Gaussian Hilbert spaces. The reader interested on the theory of Gaussian Hilbert spaces and the proofs of the facts here mentioned is referred to \cite{janson} and \cite[Appendix A.7]{bhv}.

Let $(\Omega,\nu)$ be a standard probability space. A Borel function $Z\colon \Omega \to \RR$ is a \emph{gaussian} if $Z_* \nu$ is a gaussian measure on $\RR$, that is, there exist $\mu\in \RR$ and $\sigma\geq 0$ such that $$
\nu (Z^{-1} A) = \frac{1}{\sqrt{2\pi\sigma} }\int_A \exp\left[- \frac{1}{2} \left(\frac{x - \mu}{\sigma}\right)^2 \right] dx,
$$
for every Borel subset $A\subseteq \RR$. 

We say that $Z$ is \emph{centered} if $\EE [Z] = \mu = 0$. If $Z$ is a centered gaussian we have that $\|Z\|_2 = \sigma$. An identity along the lines of the following lemma is at the core of the proof of the Connes-Weiss Theorem \cite{cw}, which we will generalize to cbers in Theorem \ref{thm:CW}.

\begin{lemma}\label{lem:gauss}
    Let $(\Omega,\nu)$ be a standard probability space and $X,Y$ normalized centered gaussians in $L^2(\Omega,\nu)$. Then $$
    \nu(\{X \geq 0\}\cap \{Y<0\}) = \frac{1}{\pi} \cos^{-1} \EE[XY]
    $$
\end{lemma}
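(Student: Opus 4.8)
The plan is to reduce the statement to the classical fact that for a standard bivariate Gaussian vector $(X,Y)$ with unit variances and correlation $\rho = \EE[XY]$, the probability that the two coordinates have opposite signs is $\tfrac{1}{\pi}\cos^{-1}\rho$. Since $X$ and $Y$ are jointly Gaussian (any linear combination $aX+bY$ lies in the Gaussian Hilbert space they generate, hence is itself a centered Gaussian), the pair $(X,Y)$ has a genuine bivariate normal law, and its distribution depends only on the covariance matrix $\begin{pmatrix} 1 & \rho \\ \rho & 1 \end{pmatrix}$. So $\nu(\{X \geq 0\} \cap \{Y < 0\})$ is a function of $\rho$ alone, and it suffices to compute it.

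First I would handle the degenerate case: if $|\rho| = 1$ then $Y = \pm X$ almost surely, and the formula gives $\tfrac{1}{\pi}\cos^{-1}(\pm 1) \in \{0,1\}$, matching $\nu(\{X\ge 0\}\cap\{Y<0\})$ directly; and if the covariance matrix is nondegenerate ($|\rho| < 1$) the law of $(X,Y)$ has the standard bivariate normal density. For the nondegenerate case I would use the rotational argument: write $X = \cos\theta \cdot U + \sin\theta\cdot V$ style, or more cleanly, realise $(X,Y)$ as $(W_1, \rho W_1 + \sqrt{1-\rho^2} W_2)$ for independent standard Gaussians $W_1, W_2$, and then pass to polar coordinates $(W_1, W_2) = r(\cos\phi, \sin\phi)$ with $\phi$ uniform on $[0,2\pi)$. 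The event $\{X \geq 0\} \cap \{Y < 0\}$ becomes an event depending only on $\phi$, namely that $\phi$ lies in an arc whose angular length works out to $\cos^{-1}\rho$; dividing by $2\pi$ gives the result. Alternatively one can cite the classical "orthant probability" computation directly, but since the paper wants a self-contained treatment I would include the short polar-coordinates derivation.

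Concretely, with $X = W_1$ and $Y = \rho W_1 + \sqrt{1-\rho^2}\,W_2$, the condition $X \geq 0$ is $\cos\phi \geq 0$ and the condition $Y < 0$ is $\rho\cos\phi + \sqrt{1-\rho^2}\sin\phi < 0$, i.e. $\sin(\phi + \psi) < 0$ where $\psi = \cos^{-1}\rho \in [0,\pi]$ (using $\cos\psi = \rho$, $\sin\psi = \sqrt{1-\rho^2}$). So we need $\phi \in [-\pi/2, \pi/2] \pmod{2\pi}$ and $\phi + \psi \in (\pi, 2\pi) \pmod{2\pi}$; intersecting these two arcs gives an arc of length exactly $\psi$, and since $\phi$ is uniform the probability is $\psi/(2\pi) = \tfrac{1}{2\pi}\cos^{-1}\rho$. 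Wait — this gives $\tfrac{1}{2\pi}\cos^{-1}\rho$, not $\tfrac{1}{\pi}\cos^{-1}\rho$; the discrepancy is resolved because the stated quantity in the lemma must in fact be $\tfrac{1}{2\pi}\cos^{-1}\EE[XY]$, or else the lemma intends $\nu(\{X \geq 0\}\,\triangle\,\{Y \geq 0\}) = \nu(\{X\ge0,Y<0\}) + \nu(\{X<0,Y\ge0\})$, which by the symmetry $(X,Y)\mapsto(-X,-Y)$ doubles the quantity to $\tfrac{1}{\pi}\cos^{-1}\rho$. I would therefore either (i) prove the symmetric-difference version, which is the quantity actually used in a Connes--Weiss argument, or (ii) keep the one-sided event and correct the constant. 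In the write-up I would go with whichever matches the intended application in Theorem \ref{thm:CW}.

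The main obstacle is not any deep step — it is entirely bookkeeping: correctly identifying the arc lengths in the polar-coordinate computation and getting the constant ($\tfrac{1}{\pi}$ versus $\tfrac{1}{2\pi}$) right, together with cleanly justifying that $(X,Y)$ is jointly Gaussian (so that the reduction to the covariance matrix is legitimate) and disposing of the degenerate case $|\rho|=1$ separately. Everything else follows from rotational invariance of the standard Gaussian on $\RR^2$.
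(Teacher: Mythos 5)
Your reading of the situation is correct: the lemma as stated is off by a factor of two. The one-sided orthant probability is indeed
\[
\nu\big(\{X\ge 0\}\cap\{Y<0\}\big)=\tfrac{1}{2\pi}\cos^{-1}\EE[XY],
\]
and the $\tfrac{1}{\pi}$ constant belongs to the symmetric version $\nu\big(\{X\ge 0\}\triangle\{Y\ge 0\}\big)$, which is the classical Sheppard/Grothendieck identity and is what the proof of the Connes--Weiss theorem actually uses (there the sum of the two ``opposite sign'' terms appears). The paper itself does not prove the lemma --- it cites \cite{janson} and \cite[Appendix A.7]{bhv} --- so your polar-coordinates derivation is supplying an argument the paper omits, and in the process you caught a genuine misstatement. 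You are also right that the error is harmless downstream: the inequality in the proof of Theorem~\ref{thm:CW} becomes $(\mu\otimes\nu)(A_n\triangle\tilde\alpha_n A_n)\ge\tfrac{1}{\pi}\cos^{-1}\tfrac12=\tfrac13$, which is still bounded away from zero.

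One small slip in your computation: $\rho\cos\phi+\sqrt{1-\rho^2}\sin\phi$ is $\cos(\phi-\psi)$, not $\sin(\phi+\psi)$, where $\cos\psi=\rho$, $\sin\psi=\sqrt{1-\rho^2}$. With $\psi\in[0,\pi]$, the event $\{X\ge 0,\,Y<0\}$ becomes $\phi\in[-\pi/2,\pi/2]$ together with $\phi-\psi\in(\pi/2,3\pi/2)\bmod 2\pi$, and the intersection is $[-\pi/2,\,\psi-\pi/2)$, an arc of length exactly $\psi$. Since $\phi$ is uniform on $[0,2\pi)$, this gives probability $\psi/(2\pi)=\tfrac{1}{2\pi}\cos^{-1}\rho$ as you concluded. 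So your bookkeeping is correct up to that one trigonometric identity, which does not affect the arc length. Your treatment of the joint-Gaussianity point (any $L^2$-linear combination of centred Gaussians in a Gaussian Hilbert space is centred Gaussian, so $(X,Y)$ is genuinely bivariate normal with law determined by the covariance matrix) and of the degenerate case $|\rho|=1$ is also fine; note that the degenerate case checks out for the corrected constant as well, since $\tfrac{1}{2\pi}\cos^{-1}(-1)=\tfrac12=\PP(X>0)$ when $Y=-X$.
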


A \emph{Gaussian Hilbert space (GHS)} is a Hilbert subspace $K$ of $L_\RR^2(\Omega,\nu)$  where every random variable $Z\in K$ is a centered Gaussian. Every real separable Hilbert space is isomorphic to a GHS in which the Gaussians generate the $\sigma$-algebra of the underlying space. When the Gaussians of the GHS $K$ generate the $\sigma$-algebra of the underlying space, every orthogonal operator $T:K\rightarrow K$ is induced by an essentially unique measure-preserving bijection $\theta_T$ of $\Omega$ in the sense that $T=\theta_T^*$, where $\theta_T^* Z := Z\circ \theta_T^{-1}$ for every $Z\in L_\RR^2 (\Omega,\nu)$. Due to uniqueness, the equality $\theta_{TS} = \theta_T \theta_S$ holds almost surely for any pair $S,T$ of orthogonal operators.

\section{Measured Property (T) and URGs}\label{sec:(T)}

The first aim of this section is to prove a generalization of the Connes-Weiss Theorem in the setting of ergodic pmp cbers. In order to prove this characterization, we will need to show invariance of measured Property (T) under extensions. We will deduce from the Connes-Weiss style characterization that measured Property (T) is invariant under factor maps, generalizing the fundamental fact that group Property (T) is preserved quotients. These results show that measured Property (T) is completely determined by the URG obtained by sampling from any bounded degree graphing. We conclude this section with the introduction of Property (T) for URGs.

\begin{proposition}\label{prop:passup}
Ergodic extensions of pmp cbers with Property (T) have Property (T).
\end{proposition}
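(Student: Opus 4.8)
The plan is to start from an ergodic extension $f\colon (Y,Q,\nu)\to(X,R,\mu)$, where $(X,R,\mu)$ has Property (T), and to show $(Y,Q,\nu)$ has Property (T). So suppose $(\pi,\mathcal H)$ is a representation of $Q$ with a sequence $(\xi_n)_n$ of normalised almost invariant fields. The key idea is to \emph{push $\pi$ down} to a representation $\bar\pi$ of $R$ on a new field of Hilbert spaces $\bar{\mathcal H}$, exploiting that $f$ is class-bijective. Concretely, for $x\in X$ set $\bar{\mathcal H}^x := \bigoplus_{y\in f^{-1}(x)} \mathcal H^y$; since $f$ restricted to each $Q$-class is a bijection onto the corresponding $R$-class, for $(x',x)\in R$ and $y\in f^{-1}(x)$ there is a unique $y'\in f^{-1}(x')$ with $y'Qy$, and we define $\bar\pi(x',x)$ to act coordinatewise by $\pi(y',y)$. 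One checks this is a bona fide unitary/orthogonal representation of $R$ — measurability follows from the Lusin--Novikov uniformisation of $f$, and the cocycle identity is inherited from $\pi$.

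Next I would transport the almost invariant fields. Given $\xi_n\in\mathcal H$, for $\nu$-almost every $y$ with $f(y)=x$ place $\xi_n(y)$ in the corresponding summand of $\bar{\mathcal H}^x$; call the resulting field $\bar\xi_n\in\bar{\mathcal H}$. Using $f_*\nu=\mu$ and the disintegration of $\nu$ over the fibres of $f$ (which is finite, since $f$ is class-bijective, giving countable fibres — in fact the fibre cardinality is the index, possibly infinite, so one should normalise the inner product on $\bar{\mathcal H}^x$ or simply work with $\ell^2$ of the fibre), one sees $\|\bar\xi_n\|^2_{\bar{\mathcal H}}$ is controlled, and the almost invariance estimate $\|\bar\pi(x',x)\bar\xi_n(x)-\bar\xi_n(x')\|\to 0$ for a.e.\ $(x',x)\in R$ follows termwise from $\|\pi(y',y)\xi_n(y)-\xi_n(y')\|\to 0$ on $Q$ together with the Mass Transport Principle to move the fibrewise sum across $f$. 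Since $(X,R,\mu)$ has Property (T), the representation $(\bar\pi,\bar{\mathcal H})$ admits a normalised invariant field $\bar\eta$; unpacking the definition of $\bar\pi$, invariance $\bar\pi(x',x)\bar\eta(x)=\bar\eta(x')$ means precisely that the field $\eta\in\mathcal H$ obtained by reading off the coordinates of $\bar\eta$ along the fibres satisfies $\pi(y',y)\eta(y)=\eta(y')$ for $\nu$-a.e.\ $(y',y)\in Q$, i.e.\ $\eta$ is a nonzero invariant field for $\pi$; normalising it proves the proposition.

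The main obstacle I anticipate is bookkeeping around the fibre $f^{-1}(x)$ when the extension has infinite index: the naive Hilbert-space direct sum $\bigoplus_{y\in f^{-1}(x)}\mathcal H^y$ is fine, but one must check that $\bar\xi_n$ actually lands in it (square-summability over the fibre) and that the pushed-down invariant field $\bar\eta$, when unpacked, is \emph{nonzero on a non-null set} rather than concentrated in a way that vanishes $\nu$-a.e.\ — here ergodicity of $Q$ (hence of $R$, via Proposition \ref{prop-ergodic}, or directly) is what rescues us, since the norm field $y\mapsto\|\eta(y)\|$ is $Q$-invariant and thus a.s.\ constant and positive. A clean alternative that sidesteps the infinite-index issue is to invoke weak orbit equivalence invariance: pass to a non-null $A\subseteq Y$ on which $f$ restricted to a transversal realises $(A,Q|_A,\nu|_A)$ as orbit equivalent to a restriction of $(X,R,\mu)$, apply the cited theorem that Property (T) is a weak orbit equivalence invariant, and conclude. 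I would present the direct induced-representation argument as the main line, since it also sets up notation reused later, but remark that the weak orbit equivalence route gives a one-line proof.
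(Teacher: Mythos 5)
Your overall strategy---inducing a representation of $R$ from one of $Q$ by aggregating over the fibres of $f$, transporting the almost invariant fields, invoking Property (T) of $R$, and reading the resulting invariant field back along the fibres---is exactly the paper's. But the specific construction has a real gap: the fibres $f^{-1}(x)$ of a class-bijective extension need \emph{not} be countable. Class-bijectivity constrains $f$ along equivalence classes, not the cardinality of its fibres; a skew product $Y = X\times Z$ over $X$ with $Z$ a nonatomic probability space (precisely the kind of extension built in the paper's Connes--Weiss argument) has $f^{-1}(x)=\{x\}\times Z$ uncountable. Consequently $\bigoplus_{y\in f^{-1}(x)}\mathcal H^y$ is a nonseparable Hilbert space, Lusin--Novikov uniformisation does not apply to $f$, and the field $\bar\xi_n(x)$ you wish to place in the direct sum has every coordinate of norm one and hence infinite $\ell^2$-norm. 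This failure already occurs for countably infinite fibres, so rescaling the inner product by a constant cannot repair it.

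The fix is to replace the $\ell^2$-direct sum over the fibre by a \emph{direct integral} against the disintegration $(\nu_x)_{x\in X}$ of $\nu$ over $f$: set $\bar{\mathcal H}^x := \int_{f^{-1}\{x\}}^\oplus \mathcal H^y\,d\nu_x(y)$ and $\bar\xi_n(x) := \bigl(y\mapsto\xi_n(y)\bigr)$. Since each $\nu_x$ is a probability measure, $\|\bar\xi_n(x)\|^2=\int\|\xi_n(y)\|^2\,d\nu_x(y)=1$, and the rest of your argument---$\bar\pi$ acting fibrewise by $\pi$ via the class-bijection, almost invariance by dominated convergence, and unpacking the invariant $\bar\eta$ to $\eta(y):=\bar\eta(f(y))(y)$---goes through as you describe and coincides with the paper's proof. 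Your proposed one-line alternative via weak orbit equivalence founders on the same point: any measurable transversal of uncountable fibres is null, so there is no non-null $A\subseteq Y$ on which $f$ restricts to a Borel bijection, and one cannot conclude that $Q$ and $R$ are weakly orbit equivalent; indeed the paper needs the direct-integral induced representation precisely because no such reduction is available in general.
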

\begin{proof}
Let $(X,R,\mu)$ and $(Y,Q, \nu)$ be ergodic cbers on and $h\colon Y \rightarrow X$ an extension. Assume $(R,\mu)$ has Property (T) and fix a representation $(\pi,\cal H)$ of $(Q,\nu)$. We begin by constructing a representation $(\bar\pi, \overline{ \cal H})$ of $R$. 

The measurable field of Hilbert spaces $\overline{ \cal H}$ is defined by\begin{equation*}
x\in X \longmapsto \overline{ \cal H}^x=\int_{h^{-1} \{x\}}^\oplus \cal H^y d\nu_x (y),
\end{equation*}
where $(\nu_x)_{x\in X}$ is the disintegration of $\nu$ with respect to $h$. Given $x\in X$ and $y\in Y$ such that $h(y) R x$, we let $x_y$ be the unique element of $Y$ such that $x_y Q y $ and $h(x_y)=x$. This is defined for almost every $x\in X$. We then define the unitaries of the representation by\begin{equation*}
\bar\pi(x',x)=\int_{h^{-1} \{x\}}^\oplus \pi(x'_y,x_y)d\nu_{x} (y),
\end{equation*}
for every $(x',x)\in R$.

If $(\xi_n)_n$ is an almost invariant sequence of normalised fields for $(\pi,\cal H)$, then one may consider the normalized fields in $\overline{\cal H}$ given by \begin{equation*}
x\in X \longmapsto \bar\xi_n^x := \int_{h^{-1} \{x\}}^\oplus \xi_n (y) d\nu_x (y)
\end{equation*}
for each $n\in \NN$. Observe that for almost every $(x',x)\in R$, $$
\|\pi(x',x)\bar\xi_n (x) - \bar\xi_n (x')\|^2 = \int_{h^{-1}\{x'\}} \|\pi(x'_y,x_y)\xi_n(x_y) - \xi_n ({x'_y})\| ^2 d\nu_{x'} (y),
$$
so by the Dominated Convergence Theorem and almost invariance of the $(\xi_n)_n$, it follows that $(\bar\xi_n)_n$ is an almost invariant sequence of normalized vectors for $(\bar\pi,\overline{\cal H})$. Therefore, by Property (T), there exists an invariant normalised field $\bar\eta$ for $\overline{\cal H}$. 

Consider the field $y\in Y \mapsto\eta (y) := \bar\eta (h(y))(y)$. Defined as such, $\eta$ is normalised and, for almost every $(y',y)\in Q$, \begin{align*}
\pi(y',y) \eta(y) &= \pi(y',y) [\bar\eta(h(y)) (y)]\\
&=  [\bar\pi(h(y'),h(y))\bar \eta (h(y))](y') \\
&= \bar\eta (h(y'))(y')\\
&= \eta (y'),
\end{align*}
so it is also invariant. Therefore $(Q,\nu)$ has Property (T) by definition.
\end{proof}

Our next aim is to proof a characterization of measured Property (T) \`a la Connes-Weiss. We begin showing that measured Property (T) implies strong ergodicity.

\begin{proposition}\label{prop:ergodiplust}
Pmp cbers with measured Property (T) are strongly ergodic.
\end{proposition}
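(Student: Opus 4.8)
\emph{Proof strategy.} The plan is to reduce to $E_0$-ergodicity and then invoke the Jones--Schmidt theorem \cite{Jones}; since a pmp cber with measured Property~(T) is ergodic by definition, it will suffice to prove that $(X,R,\mu)$ is $E_0$-ergodic, i.e.\ that any Borel homomorphism $f\colon R\to E_0$ satisfies $f(x)\,E_0\,y^\ast$ for $\mu$-a.e.\ $x$ and some fixed $y^\ast\in 2^\NN$. The one substantial tool I will use is \cite[Proposition 16]{pichot} on increasing unions of subequivalence relations, which is why it was singled out above.

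Given such an $f$, I would first write $E_0$ as the increasing union $E_0=\bigcup_{k\in\NN}F_k$, where $s\,F_k\,t$ means $s_n=t_n$ for all $n\geq k$; each $F_k$ is a Borel equivalence relation on $2^\NN$ with $F_k\subseteq F_{k+1}$. Pulling these back along $f$, set $R_k:=(f\times f)^{-1}(F_k)\cap R$. Then each $R_k$ is a Borel subequivalence relation of $R$ (reflexivity and transitivity are inherited from $F_k$), the $R_k$ increase, and $\bigcup_k R_k=\{(x,y)\in R: f(x)\,E_0\,f(y)\}=R$ because $f$ is a homomorphism into $E_0$. Applying \cite[Proposition 16]{pichot} to this tower yields a non-null Borel set $A\subseteq X$ and an $N$ with $R|_A=R_N|_A$ (modulo a null set). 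The point is that the Borel map $x\mapsto(f(x)_n)_{n\geq N}$ is then $R|_A$-invariant: $(x,y)\in R|_A$ implies $(x,y)\in R_N$, hence $f(x)\,F_N\,f(y)$, for a.e.\ such pair. Since $(R|_A,\mu|_A)$ is ergodic (the restriction of an ergodic cber to a non-null Borel set is ergodic), this invariant map is essentially constant, so $f(x)\,E_0\,y^\ast$ for $\mu|_A$-a.e.\ $x\in A$, for some fixed $y^\ast$.

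Finally I would transfer this from $A$ to $X$ using ergodicity of $R$: the $R$-saturation of the conull subset of $A$ on which $f(x)\,E_0\,y^\ast$ holds is itself conull, and since $f$ is a homomorphism, for $\mu$-a.e.\ $x$ one can find $x'$ in $[x]_R\cap A$ with $f(x)\,E_0\,f(x')\,E_0\,y^\ast$; hence $f(x)\,E_0\,y^\ast$ for $\mu$-a.e.\ $x$, giving $E_0$-ergodicity and thus strong ergodicity by Jones--Schmidt. I do not expect a genuine obstacle — the argument is largely an assembly of cited results — but the two places needing care are this last transfer step and the fact that \cite[Proposition 16]{pichot} only gives $R|_A=R_N|_A$ up to a null set, so the invariance of $x\mapsto(f(x)_n)_{n\geq N}$ holds only after discarding a null invariant set before applying ergodicity.
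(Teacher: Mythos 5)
Your proposal is correct and follows essentially the same route as the paper: both reduce to $E_0$-ergodicity via Jones--Schmidt, pull back the standard finite filtration of $E_0$ along the given homomorphism $f$, and apply \cite[Proposition 16]{pichot} to collapse the union to some $R_N$ on a non-null set $A$. The only cosmetic difference is that you argue directly (concluding that the tail map $x\mapsto(f(x)_n)_{n\geq N}$ is $R|_A$-invariant, hence essentially constant, and then saturate), whereas the paper runs the contrapositive and derives the contradiction with ergodicity of $R$ by post-composing $f|_A$ with a Borel selector for $F_N$; the two formulations carry the same content.
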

\begin{proof}

 Let $(X,R,\mu)$ be a pmp cber with Property (T) which is not strongly ergodic. Then by the Jones-Schmidt Theorem, there exists a Borel homomorphism $f \colon X \to 2^\NN$ such that there exists no $y \in E_0$ for which $f(x) E_0 y$ for almost every $x\in X$. 

Let $R_n = f^{-1} (F_n)\cap R$, where $(F_n)_n$ is an increasing sequence of finite Borel subequivalence relations of $E_0$ such that $\bigcup_n F_n = E_0$. For $f$ being a Borel homomorphism, we have that $R = \bigcup_n R_n$, so by non-approximability of Property (T) cbers there exists $A \subseteq X$ non-null such that $R|_A = R_N|_A$ for some $N$ sufficiently large. 

Let $s$ be a Borel selector for $F_N$ and consider the map $h:=s\circ f|_A$. Due to our assumptions on $f$, we have that the measure $h_*\mu|_A$, where $\mu|_A$ is the normalization of the restriction of $\mu$ to $A$, is not supported on a single atom. Therefore, if $B$ is a Borel subset of $2^\NN$ such that $0<h_*\mu|_A (B) <1$, we have that $h^{-1}(B)$ is an $R|_A$-invariant set with $0 < \mu (h^{-1}(B)) < \mu (A)$. This implies that $R$ is not ergodic, contradicting our assumptions.
\end{proof}

Propositions \ref{prop:passup} and \ref{prop:ergodiplust} imply Schmidt's theorem \cite{schmidt} in our setting: every ergodic extension of a pmp cber with Property (T) is strongly ergodic. In the next theorem we adapt the argument of Connes-Weiss \cite{cw} to prove the converse.

\begin{theorem}[Connes-Weiss for cbers]\label{thm:CW}
An ergodic pmp cber has Property (T) if and only if every ergodic extension of it is strongly ergodic.
\end{theorem}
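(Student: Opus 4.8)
The plan is to prove the nontrivial direction: assume every ergodic extension of $(X,R,\mu)$ is strongly ergodic, and deduce Property (T). I will argue by contradiction, adapting the Connes--Weiss scheme. Suppose $(R,\mu)$ does not have Property (T); then there is an orthogonal representation $(\pi,\mathcal H)$ (real, Gaussian-ready) with a normalised almost invariant sequence $(\xi_n)_n$ but no nontrivial invariant field. First I would reduce to a representation of this shape: by standard manipulations (direct sums, tensoring with the trivial representation, the Correlation lemma) I may assume $(\pi,\mathcal H)$ has \emph{no nonzero invariant field at all}, and that the almost invariant sequence witnesses this. The point of the Gaussian machinery from the preliminaries is to turn $(\pi,\mathcal H)$ into a \emph{measurable family of probability spaces with a cber action}.

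Next, the core construction. Fibrewise over $x\in X$, let $(\Omega^x,\nu^x)$ be the Gaussian probability space built from the real Hilbert space $\mathcal H^x$ (so that $\mathcal H^x$ sits inside $L^2_\RR(\Omega^x,\nu^x)$ as centered Gaussians generating the $\sigma$-algebra). Each orthogonal operator $\pi(y,x)\colon\mathcal H^x\to\mathcal H^y$ induces an essentially unique measure-preserving isomorphism $\theta_{(y,x)}\colon\Omega^x\to\Omega^y$, and the cocycle identity $\pi(z,x)=\pi(z,y)\pi(y,x)$ passes (a.s., by uniqueness) to $\theta_{(z,x)}=\theta_{(z,y)}\theta_{(y,x)}$. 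Form the skew product space $\widehat X=\int_X^\oplus \Omega^x\, d\mu(x)$ with the measure $\widehat\mu=\int_X \nu^x\, d\mu(x)$, and let $\widehat R$ be the cber on $\widehat X$ generated by the maps $(x,\omega)\mapsto(y,\theta_{(y,x)}\omega)$ for $(y,x)\in R$. The projection $\widehat X\to X$ is then a class-bijective extension of pmp cbers, so $(\widehat X,\widehat R,\widehat\mu)$ is an extension of $(R,\mu)$; passing to an ergodic component, which is still an extension by Proposition~\ref{prop-ergodic}, I get an \emph{ergodic} extension $(\widehat X',\widehat R',\widehat\mu')$, which by hypothesis must be \emph{strongly ergodic}.

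Now I derive a contradiction by building a nontrivial asymptotically invariant sequence in $(\widehat X',\widehat R',\widehat\mu')$. For each $n$, pick a measurable choice of normalised Gaussian $Z_n^x\in\mathcal H^x\subseteq L^2_\RR(\Omega^x,\nu^x)$ realising $\xi_n(x)$, and set $A_n=\{(x,\omega): Z_n^x(\omega)\ge 0\}\subseteq\widehat X$. Then $\widehat\mu(A_n)=1/2$ for every $n$ (it is $\tfrac12$ in every fibre), so the sequence is manifestly nontrivial. For the asymptotic invariance: given $\phi\in[\widehat R']$, $\phi$ is a.e. implemented by $(x,\omega)\mapsto(g(x)\cdot x,\theta_{(g(x)x,x)}\omega)$ for a Borel map $g$ into the full group $[R]$ of the base; so for a.e.\ base point, $\phi A_n\triangle A_n$ in the fibre over $x$ is, in the Gaussian space $\Omega^{\phi(x)}$, the symmetric difference of $\{Z_n^{\phi(x)}\ge 0\}$ with $\{(\theta_{(\phi(x),x)})_\ast Z_n^x\ge 0\}=\{\pi(\phi(x),x)Z_n^x\ge 0\}$. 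By Lemma~\ref{lem:gauss}, the $\nu^{\phi(x)}$-measure of this symmetric difference equals $\tfrac1\pi\cos^{-1}\langle \pi(\phi(x),x)Z_n^x, Z_n^{\phi(x)}\rangle\cdot(\text{const})$, more precisely $\tfrac2\pi\cos^{-1}\langle\pi(\phi(x),x)\xi_n(x),\xi_n(\phi(x))\rangle$, and almost invariance of $(\xi_n)$ says this inner product tends to $1$ a.s., so the fibrewise measure tends to $0$ a.s.; dominated convergence gives $\widehat\mu(\phi A_n\triangle A_n)\to 0$. Hence $(A_n)$ is a nontrivial asymptotically invariant sequence, contradicting strong ergodicity. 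For the converse direction of the theorem, Propositions~\ref{prop:passup} and~\ref{prop:ergodiplust} already do the work: an ergodic extension of a Property (T) cber has Property (T) and is therefore strongly ergodic.

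\textbf{Main obstacle.} The routine-looking but genuinely delicate step is the \emph{measurability and fibrewise-selection bookkeeping}: choosing the field of Gaussian spaces $(\Omega^x,\nu^x)$ measurably in $x$, choosing the induced isomorphisms $\theta_{(y,x)}$ so that the cocycle identity holds \emph{simultaneously} almost everywhere (uniqueness only gives it for each fixed pair $(y,x)$, so one must invoke a Fubini/Lusin--Novikov argument over $R$ to get a genuine cber action), and verifying that the skew-product projection is class-bijective. Ensuring that passing to an ergodic component of $\widehat R$ does not destroy the ``comes from $(R,\mu)$'' structure is handled by Proposition~\ref{prop-ergodic}, but one must check the almost invariant sequence survives restriction to that component with the fibrewise $\tfrac12$-measure intact, which is immediate since $A_n$ meets every fibre in a set of the fibre-measure $\tfrac12$.
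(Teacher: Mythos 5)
Your overall strategy — Gaussian skew-product extension over $(X,R,\mu)$, half-space sets $A_n=\{Z_n^x\geq 0\}$, then passing to an ergodic component — is the same as the paper's, and your asymptotic-invariance computation via Lemma~\ref{lem:gauss} is essentially the paper's. (The paper simplifies the set-up by using ergodicity of $R$ to take the field of Hilbert spaces constant, so it works over a single Gaussian space $\Omega$ rather than a measurable field $\Omega^x$; this is cosmetic.)

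There is, however, a genuine gap at the last step, precisely where you call something ``immediate.'' You claim that after passing to an ergodic component $(\widehat X',\widehat R',\widehat\mu')$ the sequence $(A_n)$ remains nontrivial ``since $A_n$ meets every fibre in a set of the fibre-measure $\tfrac12$.'' This does not follow. The disintegration of an ergodic component $\widehat\mu_b$ over the base $X$ produces fibre measures $\nu_b^x$ that are in general \emph{not} the Gaussian measures $\nu^x$; nothing constrains an ergodic component of the skew product to reproduce the product structure fibre-by-fibre. Consequently $\widehat\mu_b(A_n)$ need not equal $\tfrac12$ and could a priori drift to $0$ or $1$ as $n\to\infty$, making $(A_n)$ trivial in that component. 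This is exactly the difficulty the paper has to work for, and it does so by invoking the ``no invariant field'' hypothesis a \emph{second} time: using the Correlation Lemma together with a maximal finite partial subequivalence relation argument, it constructs for each $n$ a full group element $\alpha_n\in[R]$ with $\EE_\nu[Z_n(y)\,\pi(y,x)Z_n(x)]\leq\tfrac12$ a.e.\ on $\graph(\alpha_n)$. By Lemma~\ref{lem:gauss} this forces the \emph{lower} bound $(\mu\otimes\nu)(A_n\triangle\tilde\alpha_n A_n)\geq\tfrac{2}{\pi}\cos^{-1}\tfrac12$ for the lift $\tilde\alpha_n$ of $\alpha_n$ to the extension; since for any ergodic component one has $(\mu\otimes\nu)_b(A_n\triangle\tilde\alpha_n A_n)\leq 2\min\bigl((\mu\otimes\nu)_b(A_n),\,1-(\mu\otimes\nu)_b(A_n)\bigr)$, this keeps $(\mu\otimes\nu)_b(A_n)$ bounded away from $0$ and $1$ on a positive-measure set of ergodic components. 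Your sketch never calls on the ``no invariant field'' hypothesis after the initial set-up, which is the tell-tale sign that a nontrivial step has been skipped.
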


\begin{proof}
It is only left to show sufficiency. Let $(X,R,\mu)$ be an ergodic pmp cber without Property (T). We will show that $(X,R,\mu)$ admits an ergodic extension which is not strongly ergodic. Since $(X,R,\mu)$ does not have Property (T), after considering the realification of an appropriate unitary representation, there exists an orthogonal representation $(\pi,\cal K)$ of $(X,R,\mu)$ with an almost invariant sequence of normalised fields $(Z_n)_n$ but no invariant normalised fields. 

By ergodicity, we can assume that $\cal K^x = K$ for a fixed GHS $K$ on a standard probability space $(\Omega,\nu)$ in which the Gaussians generate the $\sigma$-algebra. For every $(y,x) \in R$ we then have that $\pi (y,x) = \theta_{y,x}^*$ for a measure-preserving automorphism $\theta_{y,x}$ of $\Omega$. Moreover, we have that $\theta_{z,y}\theta_{y,x} = \theta_{z,x}$ almost surely. 

We consider the extension $Q$ of $R$ defined on the standard probability space $(X\times \Omega, \mu\otimes\nu)$ by $$
(x,\omega) Q (y,\omega') \iff xRy \text{ and } \theta_{y,x}^{-1} \omega = \omega'.
$$
In order to ease notation, let us denote $Z_n (x)(\omega) := Z_n (x,\omega)$. We will show that the Borel subsets $$
    A_n := \{(x,\omega)\in X\times \Omega : Z_n(x,\omega) \geq 0 \}
$$
form an asymptotically invariant sequence for $Q$. We will then conclude the proof by passing to an ergodic component where they are still asymptotically invariant and non-trivial.

For each $\alpha \in [R]$, we let $\theta_{\alpha,x} := \theta_{\alpha x,x}$ and then define a full group automorphism $\tilde\alpha \in [Q]$ by $$
\tilde\alpha (x,\omega) := (\alpha x, \theta_{\alpha, x}^{-1} \omega).
$$

We may then compute that for any such $\alpha\in [R]$, \begin{align*}
\notag (\mu\otimes \nu) (A_n \triangle  \tilde\alpha A_n) &= \int_{X} \nu\left(\{Z_n (x,\cdot) \geq 0 \}\triangle \{Z_n (\alpha^{-1} x, \theta_{\alpha^{-1},x} \cdot )  \geq 0\} \right) d\mu(x) \label{eq:CW1}\\
&= \int_{X} \nu\left(\{Z_n(x) \geq 0 \}\triangle \{\pi(x,\alpha^{-1}x)Z_n(\alpha^{-1} x) \geq 0\} \right) d\mu(x)\\
&\notag =\int_{X} \nu\left(\{Z_n(x) \geq 0\} \cap \{(\pi(x,\alpha^{-1} x) Z_n(\alpha^{-1} x)   < 0\} \right) d\mu(x) \\
&  +\int_{X} \nu\left(\{Z_n(x) < 0\} \cap \{(\pi(x,\alpha^{-1} x) Z_n(\alpha^{-1} x)   \geq 0\} \right) d\mu(x) \\
&= \frac{2}{\pi} \int_X \cos^{-1}\EE_\nu [Z_n (x) (\pi(x, \alpha^{-1} x) Z_n(\alpha^{-1}x))] d\mu(x)\\
&= \frac{2}{\pi} \int_X \cos^{-1}\EE_\nu [Z_n (\alpha x) (\pi(\alpha x,  x) Z_n(x))] d\mu(x)
\end{align*}

By almost invariance of the $(Z_n)_n$ we have that $\EE_\nu [Z_n (\alpha x) (\pi(\alpha x,  x) Z_n(x))] \rightarrow 1$ for almost every $x\in X$. Hence, for the $Z_n$ being normalized fields we deduce from the Dominated Convergence Theorem that $(\mu\otimes \nu) (A_n \triangle \tilde\alpha A_n) \rightarrow 0$ for every $\alpha \in [R]$. For $Q$ being an extension of $R$, we have that $[x]_Q = \{\tilde\alpha x: \alpha \in [R]\}$, and so $(\mu\otimes \nu) (A_n \triangle \alpha A_n) \rightarrow 0$ for every $\alpha \in [Q]$.

Let $(B,\beta, ((\mu\otimes\nu)_b)_{b\in B})$ be the ergodic decomposition of $(X\times \Omega , Q,\mu\otimes \nu)$. We show that there exists a positive measure subset of ergodic components for which $(\mu\otimes\nu)_b (A_n)$ stays bounded away from 0. To do this, fix $n\in \NN$ and consider the Borel set $$
\Phi := \left\{\{y,x\}\in [X]^2 : yRx \text{ and } \EE_\nu [Z_n (y) (\pi(y,  x) Z_n(x))]\leq \frac{1}{2}\right\}.
$$
By \cite[Lemma 7.3]{KM}, there exists a $\Phi$-maximal finite partial subequivalence relation $F$. We may regard $F$ as a partial matching with $\graph (\alpha_n) \subseteq R$. 

We now show that actually $\alpha_n \in [R]$. For this we only need to show that $\mu (\dom (\alpha_n)) = 1$. Suppose otherwise that $C:= X\backslash \dom (\alpha_n) $ were not $\mu$-null. Then, by $\Phi$-maximality of $F$, for any $(y,x)\in R|_{C}$ we would have that $$
\EE_\nu [Z_n (y) (\pi(y,  x) Z_n(x))]> \frac{1}{2}.
$$
But then it  would follow from the correlation lemma that $(\pi,\cal K)$ had normalized invariant vectors, a contradiction. 

We have thus constructed for every $n\in \NN$ a full group element $\alpha_n \subseteq [R]$ such that $$
\EE_\nu [Z_n (y) (\pi(y,  x) Z_n(x))]\leq \frac{1}{2},
$$
for almost every $(y,x)\in R$. But then, $$
(\mu\otimes \nu)(A_n \triangle \tilde\alpha_n A_n) \geq \frac{2}{\pi}\cos^{-1}\frac{1}{2} \geq \frac{2}{3}.
$$
We deduce that $(\mu\otimes\nu)_b (A_n \triangle \tilde\alpha_n A_n) $ stays bounded away from zero on a positive measure set of ergodic components, concluding the proof, as there exists then $b\in B$ where $(A_n)_n$ is a non-trivial asymptotically invariant sequence for $(X\times \Omega, Q, (\mu\otimes\nu)_b)$. 
\end{proof}

The Connes-Weiss theorem for cbers can be used to show invariance of Property (T) under factor maps.

\begin{proposition}\label{prop:passdown}
Factors of pmp cbers with Property (T) have Property (T).
\end{proposition}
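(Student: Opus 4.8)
The plan is to deduce this from our Connes--Weiss theorem, Theorem~\ref{thm:CW}. Write $\Phi\colon (X,R,\mu)\to (Y,Q,\nu)$ for the factor map. Since $R$ is ergodic and the $\Phi$-preimage of a $Q$-invariant set is $R$-invariant, $Q$ is ergodic, so by Theorem~\ref{thm:CW} it suffices to show that every ergodic extension $(\tilde Y,\tilde Q,\tilde\nu)$ of $Q$, say via a class-bijective map $p\colon\tilde Y\to Y$, is strongly ergodic. The strategy is to pull $\tilde Q$ back along $\Phi$ to an extension of $R$ — to which Proposition~\ref{prop:passup} applies — and then push the resulting strong ergodicity forward to $\tilde Q$.

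First I would form the fibre product $Z:=X\times_Y\tilde Y=\{(x,\tilde y):\Phi(x)=p(\tilde y)\}$, carrying the relatively independent joining $\rho=\int_Y\mu_y\otimes\tilde\nu_y\,d\nu(y)$ (using the disintegrations of $\mu$ over $\Phi$ and of $\tilde\nu$ over $p$), together with the equivalence relation $S$ on $Z$ given by $(x,\tilde y)\mathrel{S}(x',\tilde y')$ iff $xRx'$ and $\tilde y\tilde Q\tilde y'$. The key pointwise observation is that if $(x,\tilde y)\in Z$ and $x'Rx$, then $\Phi(x')\in[\Phi(x)]_Q=[p(\tilde y)]_Q$, so, $p$ being class-bijective, there is a unique $\tilde y'\in[\tilde y]_{\tilde Q}$ with $p(\tilde y')=\Phi(x')$, and $(x',\tilde y')\in Z$. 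Consequently the first projection $r\colon Z\to X$ restricts to a bijection $[(x,\tilde y)]_S\to[x]_R$ for every $z\in Z$, and clearly $r_\ast\rho=\mu$ and $q_\ast\rho=\tilde\nu$ for the second projection $q\colon Z\to\tilde Y$. One then checks that $\rho$ is $S$-invariant; this is the one genuinely computational point, a change-of-variables along the fibres of $p$ using the mass transport principle for $R$ together with the fact that $\tilde Q$-invariance of $\tilde\nu$ means precisely that the fibre measures $\tilde\nu_y$ transform correctly under the class-bijective structure of $p$. The upshot is that $r$ is a class-bijective extension of $R$ and $q$ is a factor map onto $\tilde Q$.

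Next I would pass to the ergodic decomposition $\rho=\int\rho_c\,d\kappa(c)$ of $S$. Applying Proposition~\ref{prop-ergodic} to the factor map $r$ (whose target $R$ is ergodic) and to the factor map $q$ (whose target $\tilde Q$ is ergodic), for $\kappa$-a.e.\ $c$ we get both $r_\ast\rho_c=\mu$ and $q_\ast\rho_c=\tilde\nu$; since the class-bijectivity of $r$ holds at every point, for such $c$ the triple $(Z,S,\rho_c)$ is an ergodic class-bijective extension of $(X,R,\mu)$. As $R$ has Property~(T), Proposition~\ref{prop:passup} gives that $(Z,S,\rho_c)$ has Property~(T), hence it is strongly ergodic by Proposition~\ref{prop:ergodiplust}, hence $E_0$-ergodic by the Jones--Schmidt theorem. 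Fix one such $c$.

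Finally, to conclude that $\tilde Q$ is strongly ergodic it is enough, by Jones--Schmidt again (as $\tilde Q$ is ergodic), to show it is $E_0$-ergodic. Given a Borel homomorphism $g\colon\tilde Q\to E_0$, the composite $g\circ q\colon S\to E_0$ is a Borel homomorphism, so $E_0$-ergodicity of $(Z,S,\rho_c)$ yields $w\in 2^{\NN}$ with $g(q(z))\mathrel{E_0}w$ for $\rho_c$-a.e.\ $z$; pushing forward under $q$ and using $q_\ast\rho_c=\tilde\nu$ gives $g(\tilde y)\mathrel{E_0}w$ for $\tilde\nu$-a.e.\ $\tilde y$. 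Thus every ergodic extension of $Q$ is strongly ergodic, and $Q$ has Property~(T) by Theorem~\ref{thm:CW}. I expect the only real obstacle to be the bookkeeping around the fibre product — verifying $S$-invariance of $\rho$, and arranging the various ``almost every ergodic component'' statements to hold simultaneously (only countably many conditions) — rather than anything conceptual.
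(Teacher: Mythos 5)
Your argument is correct and follows essentially the same route as the paper: reduce via Theorem~\ref{thm:CW} to showing that every ergodic extension of $Q$ is strongly ergodic, build the fibre product over $Y$ with the relatively independent joining, pass to an ergodic component using Proposition~\ref{prop-ergodic} so that both projections remain factor maps (and the first remains a class-bijective extension of $R$), invoke Propositions~\ref{prop:passup} and~\ref{prop:ergodiplust} to get strong ergodicity upstairs, and push $E_0$-ergodicity down along the second projection via Jones--Schmidt. The only cosmetic difference is that you define $S$ as the restriction of the product relation $R\times\tilde Q$ to the fibre product and then observe it is in fact a skew product by the uniqueness forced by class-bijectivity of $p$, whereas the paper defines the skew product directly; these give the same relation.
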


\begin{proof}
Let $(X, R,\mu)$ be a pmp cber with Property (T) and $(Y, Q,\nu)$ an ergodic factor determined by $\phi\colon X\rightarrow Y$. Consider any ergodic extension $(Z,P,\rho)$ of $(Y,Q,\nu)$ given by $\psi \colon Z\rightarrow X$. Our goal is to show that $(Z,P,\rho)$ is strongly ergodic, as then Property (T) for $(Y,Q,\nu)$ follows from the Connes-Weiss Theorem for cbers.

We will show that there exists an ergodic extension $(W,S,\lambda)$ of $R$ which factors onto $P$ by a factor map $\delta \colon W \rightarrow Z$. This is enough to conclude the proof. Indeed, for $R$ having Property (T) we have that $S$ is strongly ergodic. Then if $f\colon Z\rightarrow 2^\NN$ is a Borel homomorphism $P\rightarrow E_0$, we have that its composition with $\delta$ is a Borel homomorphism $S\rightarrow E_0$. Hence, there exists $a\in 2^\NN$ such that $f(\delta(w))E_0 a$ for almost every $w\in W$. But then $f(z)E_0 a$ for almost every $z\in Z$. It follows that $P$ is $E_0$-ergodic, so strongly ergodic by the Jones-Schmidt Theorem.

Existence of $(W,S,\lambda)$ follows from an adaptation of the joining construction for ergodic systems \cite[Chapter 6]{joinings} to our setting. We sketch this construction for the convenience of the reader. Let $W$ be the following pushout $$
W:= X\ast_Y Z = \{(x,z) \in X\times Z : \phi (x) = \psi (z)\}.
$$
The equivalence relation $S$ is then defined as a skew-product. For each $(x,z)\in W$ and $x' Rx$ we define $(x',x) z$ as the unique element of $Z$ satisfying $$
\psi((x',x) z) = \phi (x') \text{ and } (x',x) z P z
$$
Then we let $(x,z)S(x',z')$ if and only if $x'Rx$ and $z' = (x',x)z$. The extension and factor maps on $R$ and $P$ are given by projection on first and second components respectively. Letting $(\mu_y)_{y\in Y}$ and $(\rho_y)_{y\in Y}$ be the decompositions of $\mu$ and $\rho$ respectively over $(Y,\nu)$, we define $$
\lambda := \int_Y \mu_y \otimes \rho_y d\nu (y).
$$ 
This measure is invariant, though not necessarilly ergodic. To conclude the proof it suffices to pass to an ergodic component by Proposition \ref{prop-ergodic}
\end{proof}

Invariance under extensions and factor maps allows us to prove stability under finite index subequivalence relations. For an ergodic pmp cber $R$ and a Borel subequivalence relation $Q\leq R$, recall that $[Q:R]\in \NN \cup \{0,\infty\}$ is the number of $Q$-classes in almost every $R$-class. Following \cite[Section 25]{KM}, we say that $Q$ has finite index in $R$ if $[Q:R]< \infty$.

\begin{proposition}\label{prop:fini}
    Let $(X,R,\mu)$ be an ergodic pmp cber and $Q \leq R$ an ergodic finite index subequivalence relation. Then $R$ has Property (T) if and only if $Q$ has Property (T).
\end{proposition}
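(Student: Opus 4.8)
The plan is to prove the two implications separately; the implication ``$Q$ has Property (T) $\Rightarrow$ $R$ has Property (T)'' needs no finiteness hypothesis, and the index only enters the converse. For this first implication, observe that since $Q\subseteq R$ the identity map $\id\colon (X,Q,\mu)\to(X,R,\mu)$ is a Borel homomorphism from $Q$ onto $R$ (because $xQy$ implies $xRy$), and it is measure preserving, so $(X,R,\mu)$ is an ergodic factor of $(X,Q,\mu)$; Proposition \ref{prop:passdown} then applies directly.

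For the converse, assume $R$ has Property (T). The idea is to realise $Q$, up to weak orbit equivalence, inside a class-bijective extension of $R$, and then appeal to invariance of measured Property (T) under weak orbit equivalence. Write $n=[R:Q]<\infty$; by ergodicity almost every $R$-class contains exactly $n$ $Q$-classes. Let
\[
  \hat X := \{(x,D) : D \text{ is a }Q\text{-class with } D\subseteq [x]_R\},
\]
a standard Borel space (embed it in $R$ using a Borel selector for $Q$) with the $n$-to-one Borel projection $\pi_1(x,D)=x$, equipped with the probability measure $\hat\mu$ whose disintegration over $\mu$ along $\pi_1$ is uniform on each $n$-point fibre. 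Define $\hat R$ by $(x,D)\,\hat R\,(y,D')$ iff $xRy$ and $D=D'$. A routine computation with the mass transport principle for $R$ shows $(\hat X,\hat R,\hat\mu)$ is a pmp cber, and since $\pi_1$ maps $[(x,D)]_{\hat R}=\{(y,D):yRx\}$ bijectively onto $[x]_R$, the projection $\pi_1$ exhibits $\hat R$ as a class-bijective extension of $R$. Crucially, the Borel section $\hat Y := \{(x,[x]_Q):x\in X\}$ of $\pi_1$ satisfies $\hat R|_{\hat Y}\cong Q$, since $(x,[x]_Q)\,\hat R\,(y,[y]_Q)$ holds iff $xRy$ and $[x]_Q=[y]_Q$, i.e.\ iff $xQy$.

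Since $\hat R$ need not be ergodic, pass to its ergodic decomposition $\hat\mu=\int_Z\hat\mu_z\,d\beta(z)$. As $R$ is ergodic and $\pi_1$ is a factor map, Proposition \ref{prop-ergodic} gives $(\pi_1)_\ast\hat\mu_z=\mu$ for $\beta$-a.e.\ $z$; for such $z$, $\hat R_z$ is an ergodic class-bijective extension of $R$ and hence has Property (T) by Proposition \ref{prop:passup}. Because $\hat\mu(\hat Y)=1/n>0$, choose such a $z$ with $\hat\mu_z(\hat Y)>0$. Set $X':=\pi_1(\hat Y\cap\operatorname{supp}\hat\mu_z)\subseteq X$; it is non-null since $\mu(X')=(\pi_1)_\ast\hat\mu_z(X')=\hat\mu_z(\pi_1^{-1}X')\geq\hat\mu_z(\hat Y)>0$. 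Under the identification $\hat Y\cong X$, the pmp cber $\hat R_z|_{\hat Y}$ becomes $Q|_{X'}$ carrying the normalisation of $\hat\mu_z|_{\hat Y}$; but $Q|_{X'}$ is ergodic (a restriction of the ergodic $Q$), so it has a unique invariant probability measure, which must therefore be $\mu|_{X'}$ normalised. Hence the identification is an isomorphism of pmp cbers, so $\hat R_z$ and $Q$ are weakly orbit equivalent (via the non-null sets $\hat Y\cap\operatorname{supp}\hat\mu_z$ and $X'$), and since $\hat R_z$ has Property (T) and measured Property (T) is preserved by weak orbit equivalence, $Q$ has Property (T).

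The genuinely delicate step is the passage through the ergodic decomposition of $\hat R$: one must simultaneously arrange that some ergodic component with Property (T) meets the section $\hat Y$ in positive measure and that the invariant measure it induces on $\hat Y\cong X'$ is a multiple of $\mu|_{X'}$ — the latter being exactly where ergodicity of $Q|_{X'}$ and uniqueness of its invariant measure are used. The remaining ingredients — that $\hat R$ is a pmp cber, that $\pi_1$ is a class-bijective factor, and that the restriction of a pmp cber to a non-null subset is weakly orbit equivalent to it — are routine consequences of the mass transport principle, Lusin--Novikov uniformisation, and the definitions.
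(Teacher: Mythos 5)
Your overall strategy is sound and, in fact, cleaner than the paper's. You separate the two implications: the direction ``$Q$ has (T) $\Rightarrow$ $R$ has (T)'' via the observation that $\id\colon(X,Q,\mu)\to(X,R,\mu)$ is a factor map (Proposition~\ref{prop:passdown}) is a nice shortcut that the paper does not use. For the converse, your space $\hat X=\{(x,D):D\text{ a }Q\text{-class in }[x]_R\}$ is an intrinsic version of what the paper builds: the paper instead realises the $n$-fold class-bijective cover concretely on $X\times[n]$ using a finite transversal equivalence relation $F\leq R$ (produced via a $\Phi$-maximal fsr, where $\Phi$ consists of $n$-element $R$-related sets meeting each $Q$-class at most once) and an auxiliary $\ZZ/n\ZZ$-action. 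Your $\hat X$ avoids that auxiliary choice, and your identification $\hat R|_{\hat Y}\cong Q$ with $\hat Y=\{(x,[x]_Q)\}$ is clean. The appeal to Proposition~\ref{prop-ergodic} to choose an ergodic component of $\hat\mu$ that is still an extension of $(X,R,\mu)$, then to Proposition~\ref{prop:passup} and invariance under weak orbit equivalence, is exactly the route the paper takes.

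There is, however, a genuine gap at the step where you identify the measure on $\hat Y$. You write that $Q|_{X'}$ is ergodic and therefore ``has a unique invariant probability measure, which must therefore be $\mu|_{X'}$ normalised.'' That implication is false: an ergodic pmp cber typically has a huge space of invariant probability measures (for instance the tail relation $E_0$ on $2^{\NN}$ is ergodic with respect to every nondegenerate Bernoulli measure). Ergodicity of $(X,Q,\mu)$ tells you nothing about uniqueness among all $Q$-invariant measures, and the measure $\nu':=(\pi_1|_{\hat Y})_\ast\bigl(\hat\mu_z|_{\hat Y}/\hat\mu_z(\hat Y)\bigr)$ is a priori some other $Q$-invariant measure. (The use of $\operatorname{supp}\hat\mu_z$ is also undefined on a bare standard Borel space; you would need to fix a compatible Polish topology or rephrase in terms of Radon--Nikodym densities.) The correct argument is a relative one: since $(\pi_1)_\ast\hat\mu_z=\mu$ and the fibres of $\pi_1$ are finite, $\hat\mu_z$ disintegrates as $\hat\mu_z=\int_X\tau_x\,d\mu(x)$ with $\tau_x$ a probability on the $n$-point fibre, so $\nu'\ll\mu$ with density $g(x)=\tau_x(\{(x,[x]_Q)\})$ (up to normalisation). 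Because $\hat\mu_z$ and $\hat\mu$ are both $\hat R$-invariant and $\hat\mu_z\ll\hat\mu$, the Radon--Nikodym derivative $d\hat\mu_z/d\hat\mu$ is $\hat R$-invariant, which forces $\tau_x(D)=\tau_y(D)$ whenever $xRy$; hence $g$ is $Q$-invariant and, by ergodicity of $Q$ with respect to $\mu$, constant $\mu$-a.e. So $\nu'=\mu$ and $X'=X$ mod null, and $\pi_1|_{\hat Y}$ is a genuine orbit equivalence. With this repair your proof is complete; the error is localised and the rest of the reasoning stands.
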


\begin{proof}
It suffices to show that there exists an ergodic extension $(Y,\tilde R,\tilde\mu)$ of $(X,R,\mu)$ which is weakly orbit equivalent to $(X,Q,\mu)$. Then Property (T) of $(X,R,\mu)$ will be equivalent to that of $(Y,\tilde R,\tilde\mu)$, by invariance under extensions and factors, which will also be equivalent to  Property (T) of $(X,Q,\mu)$, by invariance of Property (T) under weak orbit equivalence.
    
    Let $n:=[Q:R]<\infty$. Observe first that there exists a finite equivalence relation $F \leq R$ such that $|[x]_F| = n$ for almost every $x \in X$ and for any two $xFy$ we have that $x$ is not $Q$-related to $y$. To produce such $F$, let $$
    \Phi := \{\phi \subseteq [X]^{n} : \phi\times\phi \subseteq R, \forall x,y\in \phi\text{ such that } x\neq y,   x\not\sim_Q y \}.
    $$
    By \cite[Lemma 7.3]{KM}, there exists a $\Phi$-maximal finite partial subequivalence relation $F$. This implies that for every $x\in \dom (F)$, we have that $[x]_F \in \Phi$. Moreover, by ergodicity of $Q$ we have that $X\backslash \dom (F)$ contains less than $n$ vertices in almost every $R$-class, so for $R$ being pmp it follows that $\mu (\dom (F))= 1$.

    We define $Y:= X\times [n]$ together with the Borel equivalence relations $(x,i) \tilde Q (y,j)$ if and only if $x Q y$ and $i = j$, and $(x,i)\tilde R (y,j)$ if and only if $x R y$. When $Y$ is endowed with $\tilde\mu$, the product measure of $\mu$ and the normalized counting measure on $[n]$, we have that both $(X\times [n],\tilde Q,\tilde\mu)$ and $(X,\tilde R,\tilde\mu)$ are pmp. 

    Using a Borel linear order on $X$, we may construct an essentially free action $\ZZ/n\ZZ \actson X$ such that $F$ is the orbit equivalence relation of the action. Using this action we construct a Borel map $f \colon X\times [n] \rightarrow X$ defined by $f(x,i):= (i-1)\cdot x$. We then have that, by construction of $F$, the map $f$ witnesses that $\tilde R$ is an extension of $R$. Therefore $(X\times [n], \tilde R,\tilde\mu)$ has Property (T). For Property (T) being weakly orbit equivalent, it follows that $(X\times \{1\}, \tilde R|_{X\times \{1\}}, \tilde \mu|_{X\times \{1\}} )$ has Property (T), so  $(X,Q,\mu)$ has Property (T), since $\tilde R|_{X\times\{1\}} = \tilde Q|_{X\times\{1\}}$.
\end{proof}

Invariance under extensions and factors implies that Property (T) of a pmp cber is completely determined by the URG generated by any of its graphings, as it is made precise in the following corollary of Theorem \ref{thm:CW}. This observation motivates the introduction of Property (T) for URGs below.  

\begin{corollary}\label{cor:TandBi}
Let $(X,\cal G,\mu)$ and $(Y,\cal H, \nu)$ be ergodic realizations of an URG $\Lambda$. Then $\Rel (\cal G)$ has Property (T) if and only if $\Rel (\cal H)$ has Property (T).
\end{corollary}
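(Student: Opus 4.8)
The plan is to reduce the statement to the structural results already established, namely that measured Property (T) passes to ergodic extensions (Proposition \ref{prop:passup}) and to ergodic factors (Proposition \ref{prop:passdown}). The key point is that two ergodic realizations of the same URG are connected by a common extension that is again a graphing realizing $\Lambda$. First I would recall that if $(X,\cal G,\mu)$ realizes $\Lambda$, then the sampling map $\sigma\colon X\to M$ sending $x$ to its rooted $\cal G$-component satisfies $\sigma_*\mu = \Lambda$, and $\sigma$ is a factor map from $(X,\Rel(\cal G),\mu)$ onto the rerooting groupoid on $M$ equipped with $\Lambda$. So both $\Rel(\cal G)$ and $\Rel(\cal H)$ factor onto the ``same'' object. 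However, $\Lambda$ with its rerooting structure is not literally a pmp cber in the sense used here (it is a groupoid, and the rerooting relation need not have the right structure), so I would instead build an explicit common extension.

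The main step is to construct an ergodic pmp cber $(W,\cal S,\lambda)$ with a graphing that realizes $\Lambda$ and that is a class-bijective extension of both $\Rel(\cal G)$ and $\Rel(\cal H)$; equivalently, construct the graphing-theoretic joining over $\Lambda$. Concretely, take $W$ to be the set of pairs $(x,y)\in X\times Y$ whose rooted coloured (here: uncoloured) components agree, $\sigma_{\cal G}(x)=\sigma_{\cal H}(y)$; define a graphing on $W$ by connecting $(x,y)$ to $(x',y')$ exactly when $x\sim_{\cal G}x'$ is an edge and $y,y'$ are the correspondingly matched vertices under the unique rooted-graph isomorphism $\cal G_x\cong \cal H_y$. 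This is the same pushout/skew-product construction used in the proof of Proposition \ref{prop:passdown}, now carried out over the base $M$ (with measure $\Lambda$) rather than over an auxiliary cber; one equips $W$ with the relative product measure $\lambda$ obtained by disintegrating $\mu$ and $\nu$ over $\sigma_*\mu=\sigma_*\nu=\Lambda$ and integrating the fibre products. By construction the projections $W\to X$ and $W\to Y$ are class-bijective graphing extensions, and the induced graphing on $W$ again realizes $\Lambda$. Passing to an ergodic component (using Proposition \ref{prop-ergodic} as in Proposition \ref{prop:passdown}, and the fact that an ergodic realization of $\Lambda$ has ergodic realizations on almost every ergodic component since $\Lambda$ is extremal) gives the desired ergodic common extension $(W,\cal S,\lambda)$.

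Granting this, the conclusion is immediate: if $\Rel(\cal G)$ has Property (T), then its ergodic extension $\cal S$ has Property (T) by Proposition \ref{prop:passup}, and hence its ergodic factor $\Rel(\cal H)$ has Property (T) by Proposition \ref{prop:passdown}; the converse is symmetric. I expect the only real obstacle to be the bookkeeping around ergodicity: the common extension $\cal S$ built as above need not be ergodic even when $\cal G$ and $\cal H$ are, so one must show that a generic ergodic component of $(W,\cal S,\lambda)$ still projects onto $\mu$ and $\nu$ (so that it is genuinely a common extension of the given cbers, not merely of their ergodic components) and still realizes $\Lambda$. This follows from extremality of $\Lambda$ together with Proposition \ref{prop-ergodic}, exactly as in the last line of the proof of Proposition \ref{prop:passdown}, but it is the step that needs care. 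The compatibility of the fibrewise graph isomorphisms (measurability of the matching $\cal G_x\cong\cal H_y$, uniqueness up to the finitely many root-preserving automorphisms) is routine via Lusin--Novikov and is what makes the skew-product well defined.
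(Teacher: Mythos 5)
Your overall plan matches the paper's: produce an ergodic common graphing extension of $\Rel(\cal G)$ and $\Rel(\cal H)$ that again realizes $\Lambda$, pass to an ergodic component via Proposition~\ref{prop-ergodic}, and then invoke Propositions~\ref{prop:passup} and~\ref{prop:passdown}. The paper obtains the common extension in one line by citing \cite[Theorem~18.59]{lovasz} (locally equivalent bounded-degree graphings are bi-locally isomorphic), whereas you set out to build it by hand as a fibre product over $M$; the gap in your sketch sits exactly where that cited theorem does its real work.

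The problem is the automorphisms. For $\Lambda$-typical $(G,o)$ the group of root-fixing automorphisms need not be trivial, and for infinite bounded-degree graphs it can be an infinite (compact, profinite) group, so ``unique'' and ``finitely many root-preserving automorphisms'' are both false in general. More seriously, even when that group is finite, a Lusin--Novikov measurable selection of one isomorphism $\cal G_x\cong\cal H_y$ per pair $(x,y)\in W$ does not produce a graphing: the relation you then write on $W$ need not be symmetric or transitive, because the selected isomorphism at $(x',y')$ has no reason to be the re-rooting of the one selected at $(x,y)$. The correct move, which is the substantive content of Theorem~18.59, is to enlarge $W$ to the set of triples $(x,y,\phi)$ with $\phi$ a rooted isomorphism $\cal G_x\to\cal H_y$, transport edges through $\phi$ to define $\cal S$, and build $\lambda$ by disintegrating $\mu$ and $\nu$ over $\Lambda$ and integrating over the compact automorphism fibres. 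With the triples in place, both coordinate projections are genuinely class-bijective graphing extensions realizing $\Lambda$, and the remainder of your argument --- including the careful treatment of the ergodic decomposition --- coincides with the paper's.
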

\begin{proof}
In the language of \cite{lovasz}, the graphings $\cal G$ and $\cal H$ are locally equivalent by hypothesis, so they are bi-locally isomorphic by \cite[Theorem 18.59]{lovasz}, that is, there exists a third realization $(Z,\cal I,\rho)$ of $\Lambda$ such that $\Rel (Z)$ is a common extension of $\Rel (\cal G)$ and $\Rel (\cal H)$. By proposition \ref{prop-ergodic}, we can assume  that $(\cal I , \rho)$ is ergodic after passing to an ergodic component. The conclusion follows from invariance of Property (T) under extension and factor maps.
\end{proof}

\begin{definition}\label{def:Tforurg}
    An URG $\Lambda$ has Property (T) if and only if it admits an ergodic realization $(X,\cal G,\mu)$ such that $(X,\Rel (\cal G),\mu)$ has Property (T).
\end{definition}

For the convenience of the reader, we compile other characterizations of Property (T) for URGs in terms of their realizations in the following theorem. Further characterizations will be given in the next section, in Theorems \ref{lem:completed2} and \ref{thm:glasner}. 

\begin{theorem}\label{thm:completed1} Let $\La$ be a URG. The following conditions are equivalent.
\begin{enumerate}
\item $\La$ has Property (T).
\item Every ergodic realization of $\Lambda$ has Property (T).
\item Every ergodic realization of $\Lambda$ is strongly ergodic.
\end{enumerate}
\end{theorem}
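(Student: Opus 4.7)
My plan is to prove the chain $(2) \Rightarrow (1)$, $(1) \Rightarrow (2)$, $(2) \Rightarrow (3)$, $(3) \Rightarrow (2)$. The first implication is immediate from Definition \ref{def:Tforurg}, since the definition only requires the existence of one ergodic realization with Property (T). For $(2) \Rightarrow (3)$, I would invoke Proposition \ref{prop:ergodiplust} directly, which says any pmp cber with Property (T) is strongly ergodic.

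For $(1) \Rightarrow (2)$, I would apply Corollary \ref{cor:TandBi}: given a single ergodic realization with Property (T) (provided by (1)) and any other ergodic realization of $\Lambda$, bi-local isomorphism from \cite[Theorem 18.59]{lovasz} furnishes a third realization extending both, and invariance of Property (T) under extensions (Proposition \ref{prop:passup}) and factor maps (Proposition \ref{prop:passdown}) transports Property (T) from the first to the second.

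The heart of the argument is $(3) \Rightarrow (2)$, where I would use the Connes--Weiss theorem for cbers (Theorem \ref{thm:CW}). Fix an arbitrary ergodic realization $(X,\cal G,\mu)$ of $\Lambda$. To invoke Theorem \ref{thm:CW}, I need that every ergodic extension of $\Rel(\cal G)$ is strongly ergodic. Given such an extension $f\colon(Y,Q,\nu)\to(X,\Rel(\cal G),\mu)$, I would upgrade it to an ergodic realization of $\Lambda$ by pulling back the graphing: define
\[
\cal H := \{(y_1,y_2) \in Q : (f(y_1),f(y_2)) \in \cal G\}.
\]
Since $f$ is class-bijective, each restriction $f|_{[y]_Q}\colon[y]_Q\to[f(y)]_{\Rel(\cal G)}$ is by construction a rooted graph isomorphism $(\cal H_y,y)\cong (\cal G_{f(y)},f(y))$, so $\Rel(\cal H)=Q$ and the sampling distribution of $(Y,\cal H,\nu)$ agrees with that of $(X,\cal G,\mu)$ since $f_*\nu=\mu$. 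Hence $(Y,\cal H,\nu)$ is an ergodic realization of $\Lambda$, which by hypothesis (3) is strongly ergodic; thus $Q$ is strongly ergodic. Theorem \ref{thm:CW} then yields that $(X,\Rel(\cal G),\mu)$ has Property (T), completing the proof.

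The main conceptual obstacle will be the lifting step in $(3)\Rightarrow(2)$: one must verify that a cber extension of a graphing's equivalence relation can always be upgraded to a graphing extension realizing the same URG. This is essentially a bookkeeping verification using that class-bijectivity of $f$ forces the pulled-back graph structure to be componentwise isomorphic to the original, but it is the step that makes Theorem \ref{thm:CW} applicable at the URG level rather than only at the cber level.
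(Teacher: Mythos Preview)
Your proposal is correct and follows essentially the same approach as the paper: the implications $(1)\Leftrightarrow(2)$ via Corollary~\ref{cor:TandBi}, $(2)\Rightarrow(3)$ via Proposition~\ref{prop:ergodiplust}, and the key step $(3)\Rightarrow(2)$ via the pulled-back graphing $\cal H=\{(y_1,y_2)\in Q:(f(y_1),f(y_2))\in\cal G\}$ combined with Theorem~\ref{thm:CW} are exactly what the paper does (the paper organizes it as the cycle $(a)\Rightarrow(b)\Rightarrow(c)\Rightarrow(a)$, but the content is identical).
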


\begin{proof}
 $(a)\implies (b)$ follows from  Corollary \ref{cor:TandBi}. Now $(b)$ implies that every ergodic realization of $\Lambda$ has Property (T), so by Proposition \ref{prop:ergodiplust}, every ergodic realization is strongly ergodic, proving $(c)$. 
 
 To conclude the proof, let us assume $(c)$ and let $(X,\cal G,\mu)$ be an ergodic realization of $\Lambda$. If $(Y,Q,\nu)$ is an ergodic extension of $(X,\Rel (\cal G),\mu)$ by a Borel map $f$, then $Q$ admits a graphing $\cal H$ defined by $(y,y')\in \cal H$ if and only if $(f(y),f(y'))\in \cal G$ and $yQy'$. For $f$ being an extension, we have that $(Y,\cal H, \nu)$ is an ergodic realization of $\Lambda$, so it is strongly ergodic. Therefore, every ergodic extension of $(X,\Rel (\cal G),\mu)$ is strongly ergodic and by the Connes-Weiss Theorem for cbers, it has Property (T). Hence $\Lambda$ has Property (T) by definition.
\end{proof}

\section{A Glasner-Weiss Theorem for URGs}\label{sec:Glasner}

The aim of this section is to prove a Glasner-Weiss Theorem for URGs (see Theorem \ref{thm:glasner}). This Theorem will be crucial in our proof of existence of cost 1 extensions of pmp cbers with measured Property (T). Precisely, it will be the main ingredient in the proof of existence of Kazhdan-optimal partitions (see Theorem \ref{thm-optimal-existence}).

Our next goal is to show that realisations of a given URG with Property (T) are strongly ergodic in a uniform fashion, in the following sense: 

\begin{theorem}\label{lem:completed2}
Let $\Lambda$ be an URG. Then $\Lambda$ has Property (T) if and only if for every $\eps \in (0,1)$ there exists $\kappa=\kappa_\eps >0$ such that for every ergodic realization $(\cal G,\mu)$ of $ \Lambda$ and every Borel $A \subseteq X_{\cal G}$ with $\eps \leq \mu(A) \leq 1-\eps$ we have $\mu (\partial_{\cal G} A) \geq \kappa$.
\end{theorem}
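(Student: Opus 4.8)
The easy direction is the converse: if such a $\kappa_\eps$ exists for every $\eps$, then in particular every ergodic realization $(\cal G, \mu)$ of $\Lambda$ is strongly ergodic, since an asymptotically invariant sequence $(A_n)_n$ with $\mu(A_n)(1-\mu(A_n))\not\to 0$ would, along a subsequence, satisfy $\eps \le \mu(A_n)\le 1-\eps$ for some fixed $\eps$, forcing $\mu(\partial_{\cal G}A_n)\ge\kappa_\eps>0$; but $\mu(\partial_{\cal G}A_n)\to 0$ because the edge-boundary is controlled by $\sum_{\gamma}\mu(\gamma A_n\triangle A_n)$ over a countable generating family of the full group (the edges of a bounded-degree graphing are realized by countably many partial full-group elements, by Feldman--Moore / Lusin--Novikov). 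Hence every ergodic realization of $\Lambda$ is strongly ergodic, and by Theorem \ref{thm:completed1}, $\Lambda$ has Property (T).

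For the forward direction, assume $\Lambda$ has Property (T) and suppose, for contradiction, that the uniform estimate fails for some fixed $\eps\in(0,1)$: there is a sequence of ergodic realizations $(\cal G_n, \mu_n)$ on spaces $X_n$ and Borel sets $A_n\subseteq X_n$ with $\eps\le\mu_n(A_n)\le 1-\eps$ but $\mu_n(\partial_{\cal G_n}A_n)\to 0$. The plan is to encode each pair $(\cal G_n, A_n)$ as a DURG over $\Lambda$: colour each vertex by whether it lies in $A_n$, giving a unimodular random $2$-colouring $\Lambda_n'$ over $\Lambda$ with $\PP[\text{root red}]\in[\eps,1-\eps]$ and $\EE[\#\{\text{neighbours of the root with the other colour}\}] = \mu_n(\partial_{\cal G_n}A_n)\to 0$ (here I use the mass transport identity relating the $\mu_n$-measure of the boundary to the expected number of bichromatic edges at the root). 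By compactness of $M^2$, pass to a weak limit $\Lambda'_\infty$ of $(\Lambda_n')$ along a subsequence; it is a DURG over $\Lambda$, and $\PP[\text{root red in }\Lambda'_\infty]\in[\eps,1-\eps]$ by continuity of the evaluation at the root. The key point is that the expected number of bichromatic edges at the root is given by integrating the continuous function $f^\to$ against $(\Lambda'_n)^\to$ where $f$ is the indicator-difference of colours; by Proposition \ref{prop:est} this passes to the limit, so $\EE[\#\{\text{bichromatic neighbours of the root in }\Lambda'_\infty\}]=0$. Thus in $\Lambda'_\infty$ almost surely no edge of $G$ joins a red vertex to a blue vertex, i.e.\ the colour is $G$-locally constant; but then "the root is red" is an invariant event of a realization of $\Lambda$ with probability in $[\eps,1-\eps]$, contradicting strong ergodicity of ergodic realizations of $\Lambda$ (which holds by Theorem \ref{thm:completed1}).

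To make the last step rigorous I would realize $\Lambda'_\infty$ by a graphing $(\cal H, \nu)$ via Proposition \ref{prop:realiza}, pass to an ergodic component (using extremality of ergodic DURGs, so the colour-probability interval is preserved on a positive-measure set of components), forget the colour to get an ergodic realization $(\cal G_\infty, \nu)$ of $\Lambda$, and observe that the red colour class is a $\cal G_\infty$-invariant Borel set of measure in $[\eps, 1-\eps]$ — contradicting strong ergodicity, which $(\cal G_\infty, \nu)$ enjoys by Theorem \ref{thm:completed1}(c).

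The main obstacle I expect is bookkeeping around the limit object: ensuring that weak convergence of the DURGs genuinely forces the limiting expected bichromatic degree to vanish (rather than merely being lower semicontinuous the wrong way), and that one can extract an \emph{ergodic} realization whose root-colour probability still lies in $[\eps,1-\eps]$. The first is handled by Proposition \ref{prop:est} together with the fact that $f^\to$ is a fixed continuous compactly-approximable function, so that $\int f^\to\, d(\Lambda'_n)^\to \to \int f^\to\, d(\Lambda'_\infty)^\to$; the second is handled by extremality/ergodic-decomposition of DURGs. Everything else is a routine translation between the graphing picture and the DURG picture via Propositions \ref{prop:realiza} and \ref{prop:est}.
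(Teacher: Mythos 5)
The easy direction of your argument (uniform estimate implies Property~(T)) is correct and matches the paper's — the only subtlety you gloss over, that a vanishing edge boundary forces $\mu(\phi A_n \triangle A_n) \to 0$ for \emph{every} $\phi\in[R]$ and not just the finitely many partial isomorphisms labelling the graphing edges, is standard for bounded degree graphings, and your appeal to Feldman--Moore/Lusin--Novikov is the right instinct.

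The forward direction, however, has a genuine gap at the final step. You produce a weak limit $\Lambda'_\infty$ of 2-coloured DURGs, realize it by a coloured graphing $(\cal H, \nu)$, and observe that the red class $A$ satisfies $\nu(\partial_{\cal H} A) = 0$ with $\nu(A)\in[\eps,1-\eps]$. But this means $A$ is an \emph{invariant} set of intermediate measure, so $(\cal H,\nu)$ is not ergodic — and a non-ergodic realization of an ergodic URG is no contradiction (any Property~(T) group has plenty of non-ergodic p.m.p.\ actions, hence non-ergodic graphings realizing its Cayley graph). Your proposal to ``pass to an ergodic component, using extremality of ergodic DURGs so the colour-probability interval is preserved'' cannot be carried out: extremality applies to \emph{ergodic} DURGs, whereas $\Lambda'_\infty$ is not ergodic precisely because the colour class is nontrivially distributed; and once $A$ is invariant, $\nu_z(A)\in\{0,1\}$ on \emph{every} ergodic component $\nu_z$ of the uncoloured graphing, so the interval $[\eps,1-\eps]$ evaporates exactly at the moment you restrict to a component. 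What you actually need is a single \emph{ergodic} realization carrying a nontrivial asymptotically invariant \emph{sequence}, not a non-ergodic realization carrying a single invariant set. (Note also that ``$\Lambda'_\infty$ not ergodic'' plus ``$\Lambda$ has Property~(T)'' \emph{is} a contradiction via the Glasner--Weiss theorem, Theorem~\ref{thm:glasner} — but that theorem is proved in the paper \emph{using} Theorem~\ref{lem:completed2}, so this route would be circular.)

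The paper avoids this by a different mechanism. Rather than taking a weak limit of DURGs, it inductively builds a common ergodic extension $(\cal H_n,\nu_n)$ dominating both $(\cal H_{n-1},\nu_{n-1})$ and $(\cal G_n,\mu_n)$ (using bi-local isomorphism of realizations of the same URG, \cite[Theorem 18.59]{lovasz}, and Proposition~\ref{prop-ergodic} to stay ergodic), and then forms the inverse limit $(\cal H_\infty,\nu_\infty)$. Proposition~\ref{prop:ergolim} guarantees this inverse limit is ergodic, and because each $(\cal G_n,\mu_n)$ is an extension-factor of $\cal H_\infty$, the sets $A_n$ pull back to sets $B_n\subseteq X_{\cal H_\infty}$ with $\eps\le\nu_\infty(B_n)\le 1-\eps$ and $\nu_\infty(\partial_{\cal H_\infty} B_n)\to 0$ — a nontrivial asymptotically invariant sequence inside a single ergodic realization, which is exactly the failure of strong ergodicity needed to conclude by Theorem~\ref{thm:completed1}. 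The crucial structural point your plan misses is that the $A_n$ must all live on the \emph{same} ergodic space before the boundary estimates can be read as a strong-ergodicity failure.
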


Before proving this theorem, we recall the inverse limit construction for graphings.

An \emph{inverse system of graphings} $(\cal G_n,\mu_n,\phi_n)_{n}$ consists of a sequence of graphings $(\cal G_n,\mu_n)_{n}$ and a sequence of extensions $\phi_n:\cal G_n \rightarrow \cal G_{n-1}$ defined for each $n\in \NN$. Let
\[
X_{\cal G_\infty} = \left\{ x \in \prod_{n\in \NN} X_{\cal G_n} : \phi_n (x_n) = x_{n-1},\ \text{for every}\ n\in \NN \right\},
\]
and set \begin{equation*}
\cal G_\infty = \left\{(x,y)\in X_{\cal G_\infty} \times X_{\cal G_\infty} : (x_n,y_n) \in \cal G_n,\ \text{for every}\ n\in \NN \right\}.
\end{equation*}
Kolmogorov's consistency theorem implies that there exists a unique Borel probability measure $\mu_\infty$ on $X_{\cal G_\infty}$ making $(\cal G_\infty,\mu_\infty)$ and the Borel \emph{projection} maps $\pi_n : x\in X_{\cal G_\infty} \rightarrow x_n \in X_{\cal G_n}$ an extension for every $n\in \NN$. The graphing $(\cal G_\infty, \mu_\infty)$ is the \emph{inverse limit} of the inverse system $(\cal G_n,\mu_n,\phi_n)_{n\in \NN}$.

\begin{proposition} \label{prop:ergolim}
The inverse limit of an inverse system of ergodic graphings is ergodic.
\end{proposition}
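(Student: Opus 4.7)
The plan is to run a conditional expectation / martingale argument with respect to the natural filtration coming from the projections $\pi_n : X_{\cal G_\infty} \to X_{\cal G_n}$. Let $A \subseteq X_{\cal G_\infty}$ be a Borel $\Rel(\cal G_\infty)$-invariant set; the goal is to show $\mu_\infty(A) \in \{0, 1\}$. Because $X_{\cal G_\infty}$ sits inside $\prod_n X_{\cal G_n}$ with the inverse-limit Borel structure, the sub-$\sigma$-algebras $\cal F_n := \pi_n^{-1}(\cal B(X_{\cal G_n}))$ form an increasing filtration whose union generates $\cal B(X_{\cal G_\infty})$ modulo $\mu_\infty$-null sets.

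The steps are as follows. First, form the martingale $X_n := \EE_{\mu_\infty}[\one_A \mid \cal F_n]$ and write $X_n = g_n \circ \pi_n$ for a Borel function $g_n : X_{\cal G_n} \to [0, 1]$. Second, show that each $g_n$ is $\Rel(\cal G_n)$-invariant $\mu_n$-almost everywhere. Third, invoke ergodicity of $(\cal G_n, \mu_n)$ to deduce that $g_n$ is $\mu_n$-a.e.~equal to the constant $\mu_\infty(A)$, so that $X_n \equiv \mu_\infty(A)$ almost surely. Fourth, apply the martingale convergence theorem, which gives $X_n \to \one_A$ $\mu_\infty$-a.e., forcing $\one_A = \mu_\infty(A)$ a.s.~and hence $\mu_\infty(A) \in \{0, 1\}$.

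The main obstacle is the second step, which is where the class-bijective nature of the extensions $\pi_n$ enters. Using Lusin--Novikov, I will decompose $\cal G_n$ into a countable union of Borel partial matchings, each realised by a Borel partial isomorphism $\phi_M$ of $X_{\cal G_n}$. Given such an $M$, define its lift $\tilde M := \{(x, y) \in \cal G_\infty : (\pi_n(x), \pi_n(y)) \in M\}$; the class-bijective hypothesis on $\pi_n$ guarantees that $\tilde M$ is itself a Borel partial matching in $\cal G_\infty$ with associated partial isomorphism $\tilde\phi_M$ satisfying $\pi_n \circ \tilde\phi_M = \phi_M \circ \pi_n$. Invariance of $\mu_\infty$ under $\Rel(\cal G_\infty)$ implies that $\tilde\phi_M$ preserves $\mu_\infty$ on its domain, while $\Rel(\cal G_\infty)$-invariance of $A$ means $A$ is preserved setwise by $\tilde\phi_M$. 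Combining these and pushing down via $\pi_n$ gives $\int_B g_n \, d\mu_n = \int_{\phi_M(B)} g_n \, d\mu_n$ for every Borel $B \subseteq \dom(\phi_M)$, whence $g_n \circ \phi_M = g_n$ holds $\mu_n$-a.e.~on $\dom(\phi_M)$. Ranging over the countable family of matchings yields $\Rel(\cal G_n)$-invariance of $g_n$, completing the key step.
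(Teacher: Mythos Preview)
Your argument is correct, but it follows a genuinely different route from the paper's. The paper dispatches the proposition in three lines: it takes the ergodic decomposition of $\mu_\infty$, applies Proposition~\ref{prop-ergodic} to conclude that almost every ergodic component still pushes forward to $\mu_n$ under each $\pi_n$, and then invokes the uniqueness clause of Kolmogorov's consistency theorem to force every such component to equal $\mu_\infty$ itself. Your approach instead runs a L\'evy martingale argument against the filtration $(\cal F_n)_n$, reducing the problem to showing that the conditional expectations $g_n$ are $\Rel(\cal G_n)$-invariant, which you verify by lifting Borel matchings through the class-bijective projections. The paper's proof is shorter and more conceptual, but it leans on the ergodic decomposition machinery and on Proposition~\ref{prop-ergodic}, which it has set up earlier precisely for this kind of use. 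Your proof is more self-contained and hands-on: it avoids ergodic decomposition entirely, and the matching-lift step makes transparent exactly where the class-bijectivity of the extensions is consumed. Both are standard patterns---ergodic decomposition plus uniqueness versus filtration plus martingale convergence---and either would be acceptable here.
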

\begin{proof}
Let $(\cal G_n , \mu_n, \phi_n)_{n}$ be an inverse system of ergodic graphings. By Proposition \ref{prop-ergodic}, almost every ergodic component of $\mu_\infty$ makes the maps $\pi_n$ extensions for every $n\in \NN$. It follows from the uniqueness clause in the Kolmogorov consistency theorem that almost every ergodic component of $\mu_\infty$ coincides with $\mu_\infty$, so $(\cal G_\infty,\mu_\infty)$ is ergodic.
\end{proof}

\begin{proof}[Proof of Theorem \ref{lem:completed2}]
Sufficiency follows directly from the definition of URG with Property (T). To prove necessity, let $\Lambda $ be a URG and suppose there exists $\eps \in (0,1)$ and a sequence of ergodic realizations $(\cal G_n,\mu_n)_{n}$ of $\Lambda$ with Borel subsets $A_n \subseteq X_{\cal G_n}$ such that $\eps \leq \mu (A_n) \leq 1-\eps$ and $\mu (\partial_{\cal G_n} A_n) \rightarrow 0$. We will produce an ergodic realization of $\Lambda$ which is not strongly ergodic from this data.

Let $(\cal H_1 , \nu_1) := (\cal G_1, \nu_1)$ and define inductively $(\cal H_n , \nu_n)$ to be a common ergodic extension the graphings $(\cal H_{n-1} , \nu_{n-1})$ and $(\cal G_n , \mu_n)$. Existence of $(\cal H_n , \nu_n)$ follows again from \cite[Theorem 18.59]{lovasz} after passing to an ergodic component, if necessary, by Proposition \ref{prop-ergodic}. Let $\phi_n : \cal H_n \rightarrow \cal H_{n-1}$ and $\alpha_n : \cal H_n \rightarrow \cal G_n$ be the corresponding extensions, and let $(\cal H_\infty , \nu_\infty)$ be the inverse limit of the inverse system of graphings $(\cal H_n,\nu_n,\phi_n)_{n}$, with $n$-th projection map denoted by $\pi_n$. We get the following diagram for the system.

\begin{center}
\begin{tikzcd}

  & \cal H_2 \arrow[dl,"\phi_1"]\arrow{d}{\alpha_1} & \cal H_3 \arrow[l,"\phi_3"]\arrow{d}{\alpha_3} & \dots\arrow{l}{\phi_4} &  \cal H_n \arrow{l}{\phi_n}\arrow{d}{\alpha_n} &\dots\arrow{l}{\phi_{n+1}} & \cal H_{\infty} \arrow{l} \\
\cal G_1 &  \cal G_2 & \cal G_3  & \dots   & \cal G_n & \dots &
\end{tikzcd}
\end{center}

For each $n\in \NN$, let $B_n = \pi_n^{-1} \alpha_n^{-1} (A_n) \subseteq X_{\cal H_\infty}$. Since $\pi_n$ and $\alpha_n$ are extensions, we get $\eps \leq \nu_\infty (B_n) \leq 1-\eps$ for every $n\in \NN$ and $\nu_\infty ( \partial_{\cal H_\infty} B_n) = \nu_n (\partial_{\cal H_n} A_n) \rightarrow 0$. Hence $\cal H_\infty$ is not strongly ergodic but ergodic by Proposition \ref{prop:ergolim}. 
\end{proof}

The following is our Glasner-Weiss Theorem \cite{GW} for unimodular random graphs.

\begin{theorem}\label{thm:glasner}
    An URG $\Lambda$ has Property (T) if and only if the limit of any weakly convergent sequence of ergodic  DURGs over $\Lambda$ is ergodic.
\end{theorem}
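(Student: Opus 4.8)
The overall strategy is to prove the two implications separately, both times converting between the analytic side (Property (T), via the uniform isoperimetric inequality of Theorem \ref{lem:completed2}) and the combinatorial side (ergodicity of DURGs), with compactness of $M^D$ and the mass transport principle as the bridge. We may assume $\Lambda$ is ergodic; this is automatic when $\Lambda$ has Property (T), and for the other direction we argue the contrapositive under this standing assumption.

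\emph{Property (T) $\Rightarrow$ limits are ergodic.} Suppose $\Lambda$ has Property (T), let $\Lambda_n \to \Lambda_\infty$ be a weakly convergent sequence of ergodic DURGs over $\Lambda$, and assume towards a contradiction that $\Lambda_\infty$ is not ergodic, witnessed by a re-rooting invariant Borel set $E \subseteq M^D$ with $\beta := \Lambda_\infty(E) \in (0,1)$. Put $\eps_0 := \tfrac14 \min(\beta, 1-\beta)$ and let $\kappa := \kappa_{\eps_0} > 0$ be the constant furnished by Theorem \ref{lem:completed2}. Choose a small $\delta > 0$ (depending only on $\beta$, $D$, $\kappa$) and a continuous $f \colon M^D \to [0,1]$ with $\|f - \one_E\|_{1,\Lambda_\infty} < \delta$. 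Since $E$ is invariant we have $(\one_E)^\to \equiv 0$ on $M_\to^D$, so applying the unimodularity estimate of Proposition \ref{prop:est} to $f - \one_E$ gives $\|f^\to\|_{1,\Lambda_\infty^\to} = \|(f-\one_E)^\to\|_{1,\Lambda_\infty^\to} \le 2D\delta$; and Markov's inequality yields $\Lambda_\infty(\{f \ge t\}) \in (\beta - 6\delta, \beta + 6\delta)$ for all $t \in [\tfrac13,\tfrac23]$. Now fix an ergodic realization $(\mathcal{G}_n,\mu_n)$ of each $\Lambda_n$ (simultaneously an ergodic realization of $\Lambda$ upon forgetting colours), write $\sigma_n$ for the colouring–sampling map, and set $F_n := f \circ \sigma_n$. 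By the definition of $\Lambda_n^\to$ and of $\sigma_n$, a coarea (Fubini) identity gives $\int_0^1 \mu_n\bigl(\partial_{\mathcal{G}_n}\{F_n \ge t\}\bigr)\,dt = \|f^\to\|_{1,\Lambda_n^\to}$, and since $|f^\to|$ is continuous and bounded on $M_\to^D$ this converges to $\|f^\to\|_{1,\Lambda_\infty^\to} \le 2D\delta$.

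\emph{The crux of this direction} is to find, for some large $n$, a level $t^\ast$ for which $A_n := \{F_n \ge t^\ast\}$ satisfies both $\mu_n(\partial_{\mathcal{G}_n} A_n) < \kappa$ and $\mu_n(A_n) \in [\eps_0, 1-\eps_0]$, which contradicts Theorem \ref{lem:completed2}. For the measure constraint: monotonicity of $t \mapsto \mu_n(\{F_n \ge t\}) = \Lambda_n(\{f \ge t\})$ together with the portmanteau theorem applied at the endpoints of a fixed subinterval $I \subset (\tfrac13,\tfrac23)$ confines $\mu_n(\{F_n \ge t\})$ to $(\beta - 7\delta, \beta + 7\delta) \subseteq (\eps_0, 1-\eps_0)$ uniformly for $t \in I$ and all large $n$. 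For the boundary constraint: Markov's inequality applied to the coarea identity shows that for all large $n$ the set of $t \in I$ with $\mu_n(\partial_{\mathcal{G}_n}\{F_n \ge t\}) \ge \kappa$ has Lebesgue measure at most $3D\delta/\kappa$, which is below $|I|$ once $\delta$ was chosen small enough at the outset. Intersecting the two good ranges of levels produces the required $t^\ast$. Turning a genuinely global invariant event of $\Lambda_\infty$ into a small-boundary intermediate set inside an approximant $\mathcal{G}_n$ — that is, this level-selection argument — is where essentially all of the technical work lies.

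\emph{Limits are ergodic $\Rightarrow$ Property (T).} We prove the contrapositive. If $\Lambda$ fails Property (T), then by Theorem \ref{lem:completed2} there is $\eps \in (0,1)$ and, for each $n$, an ergodic realization $(\mathcal{G}_n,\mu_n)$ of $\Lambda$ with a Borel set $A_n$ satisfying $\eps \le \mu_n(A_n) \le 1-\eps$ and $\mu_n(\partial_{\mathcal{G}_n} A_n) < 1/n$. Colour $A_n$ red and $X_{\mathcal{G}_n} \setminus A_n$ blue; the resulting DURG $\Lambda_n'$ over $\Lambda$ is ergodic, because any invariant event pulls back along the sampling map to an invariant, hence null or co-null, subset of the ergodic space $X_{\mathcal{G}_n}$. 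It has $\Lambda_n'(\{\text{root red}\}) = \mu_n(A_n) \in [\eps,1-\eps]$ and $\EE_{\Lambda_n'}[\#\{\text{neighbours of the root of a different colour}\}] = \mu_n(\partial_{\mathcal{G}_n} A_n) \to 0$. By compactness of the space of probability measures on $M^2$, pass to a subsequence with $\Lambda_{n_k}' \to \Lambda_\infty'$ weakly; then $\Lambda_\infty'$ is a DURG over $\Lambda$. Since $\#\{\text{bichromatic edges at the root}\}$ and $\{\text{root red}\}$ are continuous functions of the $2$-coloured rooted graph, $\EE_{\Lambda_\infty'}[\#\{\text{bichromatic edges at the root}\}] = 0$ and $\Lambda_\infty'(\{\text{root red}\}) \in [\eps,1-\eps]$. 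The vanishing expectation forces the root to have no bichromatic incident edge $\Lambda_\infty'$-almost surely; a mass transport argument — each vertex sends unit mass to a nearest endpoint of a bichromatic edge in its component, carried out first for bichromatic edges within a fixed radius $r$ and then letting $r \to \infty$ — upgrades this to: the colouring is $\Lambda_\infty'$-a.s.\ constant on each connected component. Hence $\{\text{root red}\}$ coincides $\Lambda_\infty'$-a.s.\ with the re-rooting invariant event $\{\text{the whole component is red}\}$, whose probability lies in $(0,1)$, so $\Lambda_\infty'$ is a non-ergodic limit of ergodic DURGs over $\Lambda$, contradicting the hypothesis. The only delicate point in this direction is the truncation to bounded radius, which is needed to make the mass transport argument legitimate when components are infinite.
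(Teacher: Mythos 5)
Your proof is correct, and its overall architecture coincides with the paper's: both directions are routed through Theorem~\ref{lem:completed2} (the uniform isoperimetric characterisation of Property~(T)), weak-$*$ compactness of $\Prob(M^D)$, and the unimodularity estimate of Proposition~\ref{prop:est}; the contrapositive direction you give second (not~(T) $\Rightarrow$ some limit is non-ergodic) is essentially identical to the paper's first half, including the observation that $\Lambda_\infty$-a.s.\ monochromaticity plus both colours having probability $\ge\eps$ at the root forces non-ergodicity. Where you genuinely deviate is the first half: to pass from a non-trivial invariant set $E$ of $\Lambda_\infty$ to small-boundary, intermediate-measure sets in the approximants $\Lambda_n$, you take a \emph{continuous} $L^1$-approximant $f$ of $\one_E$ and run a coarea/level-set argument, selecting a good level $t^\ast$ by Markov's inequality. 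The paper instead exploits that $M^D$ is totally disconnected (bounded degree makes the ball types $\{B_r\cong F\}$ a finite clopen partition at each radius $r$), so one may approximate $E$ directly by \emph{clopen} sets $A_k$; then $\Lambda_n(A_k)\to\Lambda_\infty(A_k)$ and $\Lambda_n^\to(\partial_E A_k)\to\Lambda_\infty^\to(\partial_E A_k)$ are immediate from weak convergence, and a short diagonalisation finishes. Your route is more robust (it would work without zero-dimensionality) at the cost of extra bookkeeping — the endpoint-of-interval argument, continuity sets for the portmanteau step, and a factor-of-two ambiguity in the coarea identity that depends on whether $\partial_{\cal G}$ is read as a one- or two-sided edge boundary (the paper uses the one-sided $\partial_E$, so your identity should carry a $\tfrac12$; this is harmless). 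Two smaller remarks: Proposition~\ref{prop:est}'s norm estimate holds for any $L^1$ function, so applying it to $f-\one_E$ is legitimate even though that function is not continuous; and the truncation-to-radius-$r$ in your mass transport step is indeed needed with the edge-restricted MTP as stated, though one can equivalently iterate the one-step MTP to propagate ``no bichromatic edge at the root'' to ``no bichromatic edge within distance $k$'' for every $k$.
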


\begin{proof}

If $\Lambda$ does not have Property (T), then by definition it admits an ergodic realization $(\cal G,\mu)$ which is not strongly ergodic. Hence, there exists a sequence of almost invariant sets $(A_n)_n$ of vertices such that $\eps \leq \mu (A_n)\leq 1 -\eps$ for some $\eps>0$ and every $n\in \NN$, and $\mu (\partial_{\cal G} A_n) \to 0$. Let $\Lambda_n$ be the sequence of ergodic DURGs associated to the partition $A_n \sqcup A_n^c$ as in Proposition \ref{prop:realiza}. Upon passing to a subsequence we may assume that the $\Lambda_n$ weakly converge to some $\Lambda_\infty$. 

 Let \begin{equation}\label{eq:jag1}
K_d := \{(G,u,v)\in M^d_\to : u\ \text{and}\ v\ \text{have a different colour}\}.
\end{equation}
Since $\mu (\partial_{\cal G} A_n) \rightarrow 0$, we have that $\Lambda_n^\to (K_2) \rightarrow 0$. As $K_2$ is clopen, it follows from weak convergence $\Lambda^\infty (K_2) = 0$, so $\Lambda_\infty$-almost every graph has all vertices with the same colour. However, since $\eps \leq \mu (A_n)\leq 1 -\eps$, both colours appear at the root with positive probability. Hence $\Lambda_\infty$ is not ergodic. 

Conversely, let $(\Lambda_n)_n$ be a weakly convergent sequence of ergodic DURGs over $\Lambda$ and $\Lambda_\infty$ their limit. Suppose that $\Lambda_\infty$ is not ergodic. We will show that $\Lambda$ does not have Property (T) by negating the conclusion of Theorem \ref{lem:completed2}. Then, there exists a Borel re-rooting invariant set $A \subseteq M^d$ with $0<\Lambda_\infty (A) <1$. Let $(A_k)_k$ be a sequence of clopen subsets of $M^d$ such that $\Lambda_\infty (A \triangle A_k) \to 0$. 

We have that $\one_{A_k}\to \one_A$ in mean, so by Proposition \ref{prop:est} we get that $\one_{A_k}^\to \to \one_A^\to$ in mean. Observe that $\one_A^\to$ is null $\Lambda_\infty$-almost surely, and that $\EE_{\Lambda_\infty}[\one_{A_k}^\to] = \Lambda_\infty^\to (\partial_E A_k)$, where $$
\partial_E A_k := \{(G,u,v) \in M_\to^d : (G,u)\in A_k, (G,v)\not\in A_k\}.
$$
The latter is a clopen subset of $M_\to^d$ corresponding to the edge boundary of $A_k$. It then follows that $\Lambda_\infty^\to (\partial_E A_k) \rightarrow 0$. 

For each $k$ there exists $n(k)$ sufficiently large such that $|\Lambda_{n(k)} (A_k) - \Lambda_\infty (A_k)|<\frac{1}{k}$ and $|\Lambda_{n(k)}^\to (\partial_E A_k) - \Lambda_{\infty}^\to (\partial_E A_k)|<\frac{1}{k}$ hold. Hence, we have that $\Lambda_{n(k)}^\to (\partial_E A_k) \rightarrow 0$ and $\Lambda_{n(k)}(A_k) \rightarrow \Lambda_\infty (A)$. Considering ergodic realizations of the $\Lambda_{n(k)}$, we deduce that $\Lambda$ does not have Property (T) by Theorem \ref{lem:completed2}.
\end{proof}

\section{Hutchcroft-Pete Theorem for cbers}\label{sec:HP}

In this section we prove our generalization of the Hutchcroft-Pete Theorem \cite{HP}.

\begin{theorem}\label{thm:HP}
Infinite URGs with Property (T) have cost one. Equivalently, pmp cbers with Property (T) have a cost 1 extension.
\end{theorem}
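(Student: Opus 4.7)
The strategy follows the sketch in the introduction: for each $n \geq 2$ we construct a Kazhdan-optimal balanced $n$-colouring of $(G,o)$, show that one of its colour classes induces a subgraph of $G$ with finitely many connected components, and then pass to the limit $n \to \infty$ using Gaboriau's induction formula.

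Call a unimodular $n$-colouring of $(G,o)$ \emph{balanced} if each colour has root-probability in $(1/n - 1/n^2,\; 1/n + 1/n^2)$, and let $\kappa_n$ be the infimum of $\EE[\#\{\text{neighbours of } o \text{ of different colour}\}]$ over balanced $n$-colourings. Merging any one colour against the remaining $n-1$ produces a $2$-colouring whose root-probability lies in $[1/n - 1/n^2,\; 1 - 1/n + 1/n^2]$ and whose edge boundary is bounded by that of the original; thus $\kappa_n > 0$ by Theorem \ref{lem:completed2}. To see that $\kappa_n$ is attained by an ergodic colouring -- a \emph{Kazhdan-optimal partition} -- I take a minimising sequence, reduce to ergodic colourings by ergodic decomposition, pass to a weakly convergent subsequence using compactness of $M^n$, and deduce ergodicity of the limit from the Glasner--Weiss theorem (Theorem \ref{thm:glasner}). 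The balance condition involves integrating continuous functions, and the edge boundary is computed by integrating the clopen indicator $\one_{K_n}$ (cf.\ \eqref{eq:jag1}) against $\Lambda^\to$, so both are continuous under weak convergence by Proposition \ref{prop:est}; the limit is therefore a balanced ergodic $n$-colouring achieving $\kappa_n$.

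The technical heart of the proof is showing that every Kazhdan-optimal partition has at least one colour class $A$ such that $G|_A$ has only finitely many connected components almost surely. The plan is a local surgery argument: if every colour class had infinitely many clusters in its induced subgraph, unimodularity would supply infinitely many finite ones, and one would define a measurable, unimodular, balance-preserving recolouring -- swapping the colour of an equivariant family of small finite clusters in one colour class with a matching family in another -- that strictly decreases the expected edge boundary, contradicting optimality. The delicate part is ensuring the swap is genuinely equivariant and balance-preserving while \emph{strictly} (not merely weakly) reducing the boundary: the matched clusters must be chosen so that, on average, each swap loses more cross-colour boundary than it creates, and the pairing between the two colour classes must not cancel this gain. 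This is the main obstacle and the place where the structure theory of finite clusters in a URG is genuinely used.

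With such a colour class in hand for each $n$, I obtain a vertex-percolation $\Pi_n \subseteq V(G)$ of density at most $1/n + 1/n^2$ whose induced subgraph in $G$ has finitely many clusters almost surely. Passing to a bounded-degree realisation $(X,\cal G,\mu)$ of $(G,o)$ with cber $R$, let $A_n \subseteq X$ correspond to the complement of $\Pi_n$, so $\mu(A_n) \geq 1 - 1/n - 1/n^2$. The graphing $\cal G|_{A_n}$ fails to span $R|_{A_n}$ only through the finitely many clusters of $\cal G|_{X \setminus A_n}$ per $R$-class, so adjoining one edge per such cluster produces a graphing of $R|_{A_n}$ whose extra edge mass is controlled by the vanishing density of $\Pi_n$. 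Gaboriau's induction formula
$$\cost(R) - 1 = \mu(A_n)\bigl(\cost(R|_{A_n}) - 1\bigr),$$
together with $\mu(A_n) \to 1$ and this bound, then forces $\cost(R) = 1$, which is equivalent to $\cost(G,o) = 1$ and to the existence of a cost-one extension.
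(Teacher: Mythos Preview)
Your outline tracks the paper through the existence of Kazhdan-optimal partitions, but the two subsequent steps contain real gaps.

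For the cluster step, your surgery plan rests on ``unimodularity would supply infinitely many finite [clusters]'' and on swapping matched families of small finite clusters between two colour classes. There is no reason finite clusters need exist --- infinite Property~(T) URGs are nonamenable, and invariant percolations there can have infinitely many clusters all of which are infinite --- and even when finite clusters do exist, making a swap simultaneously equivariant, balance-preserving, and \emph{strictly} boundary-decreasing is precisely the difficulty you flag but do not resolve. The paper sidesteps all of this by passing to a \emph{clusterwise Bernoulli extension}: having chosen parts $\pi_a,\pi_b$ with $\mu(\pi_a)\le\alpha_a$, $\mu(\pi_b)\ge\alpha_b$ and $\mu(\partial_{\cal G}(\pi_a,\pi_b))>0$, one forms the skew-product extension in which each $\pi_b$-cluster touching $\pi_a$ is independently labelled $0$ or $1$ with bias $\eps$ toward $1$, and in (an ergodic component of) this extension moves the clusters labelled $1$ into $\pi_a$. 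The move is automatically equivariant, keeps $d_\infty(W(\bar\rho),\alpha)\le\eps$ by the choice of $a,b$, and strictly lowers the Kazhdan constant since every moved cluster carried an edge into $\pi_a$. No finiteness of clusters and no inter-colour matching are needed.

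For the induction step, you restrict to the large complement $A_n$ and assert that $\cal G|_{A_n}$ fails to span $R|_{A_n}$ ``only through the finitely many clusters of $\cal G|_{X\setminus A_n}$.'' That is false: removing a set with finitely many (even finite) clusters can shatter the complement into infinitely many pieces --- remove the centre of an infinite star. And even granting your claim, the resulting graphing of $R|_{A_n}$ has expected half-degree close to that of $\cal G$, not close to $1$, so $\mu(A_n)\to 1$ yields nothing. The paper instead restricts to the \emph{small} set $A$ with $\mu(A)<\eps$: it graphs $R|_A$ by $\cal G|_A$ together with a vanishing sequence of extra edges linking the finitely many $A$-clusters, obtains $\cost(R|_A)\le D$, and concludes via $\cost(R)-1=\mu(A)(\cost(R|_A)-1)\le\eps(D-1)$.
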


The proof of this theorem relies on the existence of a ergodic realizations of $\Lambda$ with arbitrarily small sets with finitely many clusters. Let $(\cal G,\mu)$ be a graphing and let $A\subseteq X$. An \emph{$A$-cluster} is any maximal subset $S$ of $A$ with the property that $A$ is path-connected in $\cal G$. In other words, an $A$-cluster is the set of vertices in a connected component of the graph induced by $A$ in $\cal G$. We say that  $(\cal G,\mu)$ has \emph{finitely many $A$-clusters} if for $\mu$-almost every $x$ we have that $x^{\cal G}$ contains only finitely many $A$-clusters.

Precisely, Theorem \ref{thm:HP} follows from the following.

\begin{theorem}\label{thm-hp-urg}
Let $\La$ be an infinite URG with Property (T) and $\eps>0$. Then there exists  an ergodic realization $(\cal G,\mu)$ of $\Lambda$ and a Borel subset $A\subseteq X$ with $\mu(A)<\eps$ such that $(\cal G,\mu)$ has finitely many $A$-clusters.
\end{theorem}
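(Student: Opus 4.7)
The plan is to construct a Kazhdan-optimal balanced $n$-colouring of $\La$ for a suitably large $n$ and take $A$ to be one of its colour classes. Fix $n \in \NN$ with $\frac{1}{n} + \frac{1}{n^2} < \eps$, and call a DURG $\La'$ over $\La$ \emph{balanced} if each colour appears at the root with probability in $[\frac{1}{n} - \frac{1}{n^2}, \frac{1}{n} + \frac{1}{n^2}]$. Let $K_n \subseteq M_\to^n$ be the clopen subset from \eqref{eq:jag1}, and set
\[
    \kappa_n := \inf \{ \La'^\to(K_n) : \La' \text{ is a balanced ergodic } n\text{-DURG over } \La \}.
\]
Theorem \ref{lem:completed2}, applied to any ergodic realisation of $\La'$ with $A$ taken to be a single colour class (which has measure in $[\frac{1}{n}-\frac{1}{n^2},\frac{1}{n}+\frac{1}{n^2}] \subseteq [\alpha, 1-\alpha]$ for $\alpha := \frac{1}{n}-\frac{1}{n^2}>0$ and $n\geq 2$), yields $\kappa_n > 0$.

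To show $\kappa_n$ is attained, take a minimising sequence $(\La_k)_k$ of balanced ergodic $n$-DURGs with $\La_k^\to(K_n) \to \kappa_n$. Since $M^n$ is a compact Polish space, its space of Borel probability measures is weakly compact; after passing to a subsequence, $\La_k \to \La^\ast$ weakly. The colour-forgetting map $M^n \to M$ is continuous, so $\La^\ast$ remains a DURG over $\La$; the events ``the root has colour $i$'' are clopen, so the closed balance condition passes to the limit; and $\La^{\ast,\to}(K_n) = \lim_k \La_k^\to(K_n) = \kappa_n$ since $K_n$ is clopen. Crucially, the Glasner--Weiss Theorem \ref{thm:glasner} forces $\La^\ast$ to be ergodic. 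Realise $\La^\ast$ via Proposition \ref{prop:realiza} as an ergodic graphing $(\cal G, \mu)$ on some $X$ with Borel colouring $c\colon X \to [n]$.

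The key claim, and the main obstacle, is that for some $i \in [n]$ the induced subgraph of $\cal G$ on $c^{-1}(i)$ has only finitely many connected components in $\mu$-almost every $\Rel(\cal G)$-class. I would argue by contradiction: if every colour class has infinitely many clusters per orbit, construct an invariant Borel modification $c'$ of $c$ with the same colour distribution but strictly smaller $\La^{\ast,\to}(K_n)$, contradicting optimality of $\kappa_n$. The intended modification is a \emph{cluster swap}: find an ordered pair of adjacent clusters $(K, K')$ of distinct colours such that swapping their colours strictly decreases the edge boundary --- this happens, for example, whenever $K$ has a neighbour of the colour of $K'$ outside of $K'$, since that edge becomes monochromatic after the swap. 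Under the standing assumption, such asymmetric pairs exist with positive rate in the cluster graph. The technical heart of the argument is producing a Borel, invariant selection of simultaneous swaps --- a family of matched pairs of clusters whose joint execution yields a globally invariant modification $c'$ while preserving the colour marginals. This should be achievable by a mass-transport argument on the cluster graph combined with Lusin--Novikov to realise an invariant matching.

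Granted the claim, set $A := c^{-1}(i)$. Then $\mu(A) \leq \frac{1}{n} + \frac{1}{n^2} < \eps$ by balance, and $(\cal G, \mu)$ has finitely many $A$-clusters by the claim. Since $(\cal G, \mu)$ realises $\La$, this completes the proof.
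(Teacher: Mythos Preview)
Your setup --- choosing $n$, defining $\kappa_n$, and extracting a Kazhdan-optimal balanced $n$-colouring via compactness plus the Glasner--Weiss theorem --- is correct and matches the paper's Theorem~\ref{thm-optimal-existence}. The gap is in the contradiction step. Your \emph{cluster swap} does not preserve balance: swapping a colour-$i$ cluster $K$ with a colour-$j$ cluster $K'$ shifts the measure of colour class $i$ by $\mu(K')-\mu(K)$, over which you have no control, and a positive-measure family of such swaps can push the marginals outside the allowed window. You also never establish that strictly gain-producing swaps exist with positive rate (if every $i$-cluster has all its $j$-neighbours inside a single $j$-cluster and vice versa, no $(i,j)$-swap helps), and the ``mass-transport plus Lusin--Novikov'' line is exactly where the work would lie and is left entirely open.

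The paper sidesteps all three issues by \emph{moving} clusters one-way rather than swapping. One first arranges the indices so that $\mu(\pi_a)\le\alpha_a$ and $\mu(\pi_b)\ge\alpha_b$ for some pair with $\mu(\partial_{\cal G}(\pi_a,\pi_b))>0$, and lets $U\subseteq\pi_b$ be the union of those $\pi_b$-clusters adjacent to $\pi_a$. If $U$ already has finitely many clusters one takes $A=U$ (note: the paper does not claim the full colour class works, only a non-null subset of one). Otherwise one passes to the \emph{clusterwise Bernoulli extension}, labelling each $U$-cluster $1$ independently with small probability and recolouring the labelled clusters from $b$ to $a$. The one-way direction together with the choice of $a,b$ guarantees balance is maintained; every moved cluster touches $\pi_a$, so the Kazhdan constant strictly drops in some ergodic component where positive mass was moved; and the Bernoulli randomization plus ergodic decomposition replaces your invariant-matching construction entirely.
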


The rest of this section is devoted to the proof of this theorem. Let us start with a very informal sketch of the proof. We consider all possible ergodic realizations $(\cal G,\mu)$ of  $\La$. In each of them we consider partitions of the vertex space $X_{\cal G}$ into $\ceil{\frac{1}{\eps}}$ roughly equal parts. In Theorem \ref{thm-optimal-existence}, we show that there exists a graphing together with a partition which minimises the amount of edges which go between the different parts of the partition. We call such minimising partitions \emph{Kazhdan-optimal}. The remaining ingredient for the proof of Theorem~\ref{thm-hp-urg} is showing that in a Kazhdan-optimal partition, some Borel subset of its parts has finitely many clusters. This is done in Section \ref{sec:HP}.c. 

Before we dive into the proof of Theorem \ref{thm-hp-urg}, let us show how it implies Theorem \ref{thm:HP}. Roughly, the proof goes as follows. 

\begin{proof}[Proof of Theorem \ref{thm:HP}]
Let $D$ be the maximum degree of $\Lambda$. For any $\eps > 0$, by Theorem \ref{thm-hp-urg}, there exists an ergodic realization $(\cal G,\mu)$ of $\Lambda$ and $A\subseteq X$ with $\mu(A) < \eps$ such that $(\cal G,\mu)$ has finitely many $A$-clusters. We show that $\cost (R_{\cal G},\mu) \leq 1 + \eps (D-1)$ using Gaboriau's Induction Formula. By the latter, it suffices to show that $\cost (R_{\cal G}|_A , \mu|_A) \leq D$ where $\mu|_A$ is the normalization of the restriction of $\mu$ to $A$.

Let $\cal E \subseteq A \times A$ denote the Borel subset of pairs $(x,y)$ such that $x$ and $y$ are $R_{\cal G}$-related, belong to diferent $A$-clusters, and realize the $\cal G$-distance between their clusters. Roughly speaking, adding an edge to $\cal G|_A$ for each of the pairs in $\cal E$ yields a graphing whose connected components are classes of $R_{\cal G}|_A$. By iteratively removing the edges defined by $\cal E$, but without losing the connectivity, we obtain graphings for $R_{\cal G}|_A$ whose expected degree converges to that of $\cal G|_A$, thus proving the above statement.

Precisely, we define a countable Borel equivalence relation $\cal Q$ on $\cal E$ by letting $(x,y)\cal Q (x',y')$ if and only if $x$ and $x'$ belong to the same $A$-cluster and $y$ and $y'$ belong to the same $A$-cluster as well. The equivalence relation $R_{\cal G}$ is not smooth almost surely for being pmp. Therefore $\cal Q$ is aperiodic a.s. We let $(\cal E_n)_n$ be a vanishing sequence of markers \cite[Lemma 6.7]{KM} for $\cal Q$. We then have that $R_{\cal G|_{A} \cup \cal E_n}$ is connected for every $n$ and has finite expected degree. Moreover, we get that $$
\cost (R_{\cal G}|_A , \mu|_A) \leq \liminf_n \frac{1}{2} \EE_{\mu|_A}[\deg (\cal G|_{A} \cup \cal E_n)] \leq D,
$$
and the proof is complete.
\end{proof}

\subsection{Existence of Kazhdan-optimal partitions}

For $k\in \NN$, let $\Prob(k)$ be the set of all sequences $\al = (\al_1,\al_2,\ldots,\al_{k})$ of non-negative real numbers with $\sum_{i=1}^{k} \al_i = 1$. For  $\al,\be \in \Prob(k)$ we let 
$$
    d_{\infty} (\al,\be) := \max\{|\al_i - \be_i|\colon i\in \{1,\ldots, k\}\},
$$
and we let $\Seq := \bigcup_{k=1}^\infty \Prob(k)$.

A \emph{partition of length $k$} of $X$ is a finite sequence of $k$ disjoint Borel subsets of positive measure  whose union is $X$. If $\pi = (\pi_1,\pi_2,\ldots, \pi_{k})$ is a partition and $\mu$ is a probability measure on $X$ then we let $W_\mu(\pi)$ be the sequence
$(\mu(\pi_1),\mu(\pi_2),\ldots,\mu(\pi_{k}))$. We will write $W(\pi)$ instead of $W_\mu(\pi)$ when $\mu$ is clear from the context. For $U,V\subseteq X_{\cal G}$ we define 
$$
    \partial_{\cal G} (U,V):=\{(x,y)\in \cal G \colon  x\in U,\, y\in V\}.
$$  
The \emph{Kazhdan constant $K_{\cal G,\mu}(\pi)$ of $\pi$} is defined to be the sum
$$
\sum_{1\leq i <j\leq k}\mu( \partial_{\cal G} (\pi_i,\pi_j)).
$$
Thus $K_{\cal G,\mu}(\pi)$ measures the amount of edges of $(\cal G,\mu)$ which go between the parts of $\pi$.

If $(\cal G,\mu)$ is a graphing then we define the \emph{Kazhdan function of $(\cal G,\mu)$}
$$
    K_{\cal G,\mu}\colon \Seq \times [0,1]\to [0,\infty]
$$ 
as follows. For $ \al = (\al_1,\al_2,\ldots,\al_k)\in\Seq$  and $\eps\in [0,1]$ we let 
$$
    K_{\cal G,\mu}(\al,\eps) :=\inf\{K_{\cal G,\mu}(\pi)\colon \pi \text{ is a partition with }d_{\infty}(W(\pi),\al)\leq \eps\}.
$$

If $\La$ is an ergodic URG then we define the Kazhdan function $K_\La\colon \Seq\times[0,1]\to [0,\infty]$ as follows: 
for $ \al = (\al_1,\al_2,\ldots,\al_k)\in\Seq$  and $\eps\in [0,1]$ we let 
$$
K_\La(\al,\eps) = \inf\{K_{\cal G,\mu}(\al,\eps)\colon \cal G\text{ is an ergodic realization of }  \La\}.
$$

Suppose we are given a URG $\La$, an ergodic realization $(\cal G,\mu)$, a partition $\pi$ of $X_{\cal G}$, and  
$(\al,\eps) \in \Seq\times [0,1]$. We say that $\pi$ is 
\emph{Kazhdan-optimal} for $(\al, \eps)$ if $K_{\cal G,\mu}(\pi) = K_\La(\al,\eps)$ and 
$d_\infty(W(\pi),\al) \leq \eps$.

The following theorem shows the existence of ergodic realizations with Kazhdan optimal partitions.

\begin{theorem}\label{thm-optimal-existence}
Let $\La$ be an ergodic URG with Property (T).
\begin{enumerate}
\item For every $(\al,\eps)\in \Seq\times[0,\min(\al))$ we have that $K_\La(\al,\eps)>0$.
\item For every $(\al,\eps) \in \Seq\times [0,\min(\al))$ there exists an ergodic graphing $\cal G$ and a partition $\pi$ of $X_{\cal G}$ which is Kazhdan-optimal for $(\al,\eps)$.
\end{enumerate}
\end{theorem}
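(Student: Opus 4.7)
The plan is to derive part (1) directly from the uniform Cheeger-type bound of Theorem~\ref{lem:completed2}, and to obtain part (2) by taking a weak limit of a minimizing sequence of ergodic colored realizations, using the Glasner--Weiss theorem for URGs (Theorem~\ref{thm:glasner}) to ensure the limit is ergodic.

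For part (1), consider any ergodic realization $(\cal G,\mu)$ of $\La$ and any partition $\pi=(\pi_1,\ldots,\pi_k)$ with $d_\infty(W(\pi),\al)\leq \eps$. The case $k=1$ is vacuous, so assume $k\geq 2$. Since $\eps<\min(\al)$, for every index $i$,
\[
\mu(\pi_i) \geq \al_i - \eps \geq \min(\al)-\eps,
\qquad
\mu(\pi_i^c) \;\geq\; \max_{j\neq i}\mu(\pi_j) \;\geq\; \min(\al)-\eps.
\]
Setting $\eps' := \min(\al)-\eps>0$, Theorem~\ref{lem:completed2} yields a $\kappa_{\eps'}>0$ (depending only on $\La$ and $\eps'$) such that $\mu(\partial_{\cal G}\pi_i)\geq \kappa_{\eps'}$ for every $i$. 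Unimodularity, together with the identity $\partial_{\cal G}\pi_i=\bigsqcup_{j\neq i}\partial_{\cal G}(\pi_i,\pi_j)$ and the symmetry $\mu(\partial_{\cal G}(\pi_i,\pi_j))=\mu(\partial_{\cal G}(\pi_j,\pi_i))$, gives $\sum_i \mu(\partial_{\cal G}\pi_i)=2K_{\cal G,\mu}(\pi)$, so $K_{\cal G,\mu}(\pi)\geq \kappa_{\eps'}/2$. Taking infima, $K_\La(\al,\eps)\geq \kappa_{\eps'}/2>0$.

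For part (2), fix $(\al,\eps)$ and let $k$ be the length of $\al$. Choose a minimizing sequence $((\cal G_n,\mu_n),\pi^n)$ of ergodic realizations of $\La$ with $k$-partitions satisfying $d_\infty(W(\pi^n),\al)\leq \eps$ and $K_{\cal G_n,\mu_n}(\pi^n)\to K_\La(\al,\eps)$. By Proposition~\ref{prop:realiza}, each pair gives rise to an ergodic DURG $\La_n$ over $\La$ via the sampling map. Using the mass transport principle, one checks that
\[
K_{\cal G_n,\mu_n}(\pi^n)\;=\;\tfrac{1}{2}\,\EE_{\La_n}\!\left[\#\{\text{neighbours of the root with a different colour}\}\right],
\]
and the function $(G,u)\mapsto \#\{\text{neighbours of }u\text{ with a different colour}\}$ is continuous and bounded by $D$ on $M^k$, since it depends only on the ball of radius $1$ around the root. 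Similarly, each coordinate $W_i(\pi^n)=\PP_{\La_n}[\text{root has colour }i]$ is the integral of a continuous bounded function.

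By compactness of $\Prob(M^k)$ in the weak topology, pass to a subsequence so that $\La_n\to \La_\infty$ weakly. Weak convergence preserves the mass transport principle (tested on continuous bounded functions, cf.\ Proposition~\ref{prop:est}) and commutes with the continuous colour-forgetting map, so $\La_\infty$ is a DURG over $\La$. Theorem~\ref{thm:glasner} now applies: since $\La$ has Property~(T) and each $\La_n$ is an ergodic DURG over $\La$, the limit $\La_\infty$ is ergodic. Continuity of $\phi$ and of the colour indicators yields
\[
\tfrac{1}{2}\EE_{\La_\infty}\!\left[\#\{\text{neighbours of the root with a different colour}\}\right]\;=\;K_\La(\al,\eps),
\qquad d_\infty(W(\La_\infty),\al)\leq \eps.
\]
Finally, Proposition~\ref{prop:realiza} provides a graphing $(\cal G,\mu)$ with partition $\pi$ realizing $\La_\infty$; by extremality of the ergodic DURG $\La_\infty$, almost every ergodic component of $(\cal G,\mu)$ is again a realization of $\La_\infty$, so any such ergodic component, together with the induced partition, is Kazhdan-optimal for $(\al,\eps)$.

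The main obstacle is ensuring that the weak limit $\La_\infty$ is ergodic: without this, one would only obtain a realization giving the infimum on some ergodic component, but with possibly different colour proportions, and the Kazhdan-optimality at the target $(\al,\eps)$ could fail. This is precisely the content of the Glasner--Weiss theorem for URGs, and is where Property~(T) of $\La$ is decisively used. A secondary (easier) point is that the Kazhdan functional is genuinely continuous in the weak topology, which relies on the bounded degree assumption so that $\phi$ is a bounded local function.
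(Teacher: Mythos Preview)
Your proof is correct and follows essentially the same route as the paper: part~(1) via Theorem~\ref{lem:completed2} (you give a direct quantitative bound where the paper argues by contradiction, but the content is the same), and part~(2) by passing to a weak limit of the associated DURGs and invoking Theorem~\ref{thm:glasner} to secure ergodicity of the limit, then taking an ergodic realization. The only cosmetic difference is that the paper tests the Kazhdan constant against the clopen set $C=\{(G,u):u\text{ has a differently coloured neighbour}\}$ rather than your expected-count functional, but both are continuous bounded local statistics and the argument is otherwise identical.
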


\begin{proof}
If we had $K_\La(\al,\eps)=0$, there would exist a sequence $(\cal G_n , \mu_n)$ of ergodic realizations of $\Lambda$ together with partitions $\pi_n = \bigsqcup_{i=1}^d \pi_n^i$ such that $d(W(\pi_n),\alpha) \leq \eps$ and $K_{(\cal G_n,\mu_n)}(\pi_n)\rightarrow 0$. Then it follows that $\mu_n(\partial_{\cal G_n} \pi_n^1) \rightarrow 0$ whilst $0< \alpha_1 - \eps<\mu_n(\pi_n^1)<\alpha_1 + \eps < 1$. Theorem \ref{lem:completed2} would then imply that $\Lambda$ does not have Property (T), proving part (a).

In order to prove part (b), we consider a sequence $(\cal G_n , \mu_n)$ of ergodic realizations of $\Lambda$ together with partitions $\pi_n = \bigsqcup_{i=1}^d \pi_n^i$ such that $d(W(\pi_n),\alpha) \leq \eps$ and $K_{(\cal G_n,\mu_n)}(\pi_n)\rightarrow K_\Lambda (\alpha,\eps)$. Let $\Lambda_n$ be the sequence of ergodic DURGs associated to these graphings and their partitions. Since $M^d$ is compact, upon possibly passing to a subsequence, we may assume that there exists a DURG $\Lambda_\infty$ over $\Lambda $ such that $\Lambda_n \to \Lambda_\infty$ weakly. It follows from Theorem \ref{thm:glasner} that $\Lambda_\infty$ is ergodic.

Let $A_1 ,\dots,A_d$ be the Borel partition of $M^d$ defined by the colour of the root. Each of these sets is clopen, so as $\Lambda_\infty$ is a weak limit of the $\Lambda_n$ we have that $|\Lambda_\infty (A_i) - \alpha_i|\leq \eps$. Moreover, if we let now $$
C := \{(G,u)\in M^d :u\text{ has a neighbour with a different colour}\},
$$
then $C$ is clopen, thus satisfying that $$
K_{(G_n,\mu_n)}(\pi_n) = \Lambda_n (C) \to \Lambda_\infty (C).
$$
By ergodicity of $\Lambda_\infty$ it admits an ergodic realization $(\cal G,\mu)$. Combining all of the above, we deduce that the partition of the vertices of $\cal G$ induced by the sampling map and the colouring in $M^d$ is Kazhdan-optimal.
\end{proof}

\subsection{Clusterwise Bernoulli extensions}\label{clust.ext}

Let $(X,\cal G, \mu)$ be a bounded degree graphing and let $A\subseteq X$ be a Borel set. In this subsection we provide details for the construction of an extension used in the proof of Theorem~\ref{thm-hp-urg}. In essence, an element of this extension is a pair $(x, f)$, where $x \in X$ and $f$ is a colouring of the $A$-clusters in the equivalence class of $x$ produced by independently flipping coins.

For $x\in A$  we define $\cluster(x)\subseteq [x]_{\cal G}$ to be the $A$-cluster containing $x$, and for $x\in X$ we let $\clusters(x)$ be the set of all $A$-clusters contained in $[x]_{\cal G}$.  To simplify the notation we will assume that for every $x\in X$  we have that $\clusters(x)$ is infinite.

The following lemma is an exercise in Borel combinatorics. It states that clusters can be measurably, though not canonically, enumerated. 

\begin{lemma}
There exists a Borel map
$$
    \cenum\colon X \to A^\NN,
$$ 
such that for every $x\in X$ we have that 
$$
    \cluster(\cenum_0(x)), \cluster(\cenum_1(x)),\ldots
$$ 
is an enumeration of the set $\clusters(x)$. 
\end{lemma}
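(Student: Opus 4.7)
The plan is to construct $\cenum$ by a greedy inductive selection from a Borel enumeration of each $\cal G$-class. First, since the cber $R_{\cal G}$ generated by $\cal G$ has countable classes, the Luzin--Novikov theorem (applied to $R_{\cal G} \subseteq X \times X$) yields Borel maps $\phi_0, \phi_1, \ldots \colon X \to X$ with $[x]_{\cal G} = \{\phi_n(x) : n \in \NN\}$ for every $x$. Observe also that the relation ``$y$ and $z$ lie in the same $A$-cluster'' is Borel in $(y,z) \in X \times X$: it is defined by $y, z \in A$ together with the existence of a finite $\cal G$-path from $y$ to $z$ staying inside $A$, a condition which is Borel since $\cal G$ and $A$ are Borel.

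I then define $\cenum$ by induction on $n$. Set $\cenum_0(x) := \phi_{i_0(x)}(x)$, where $i_0(x)$ is the least $i$ with $\phi_i(x) \in A$. Having defined $\cenum_0(x), \ldots, \cenum_n(x)$, let $i_{n+1}(x)$ be the least $i$ such that $\phi_i(x) \in A$ and $\phi_i(x) \notin \cluster(\cenum_j(x))$ for any $j \leq n$, and set $\cenum_{n+1}(x) := \phi_{i_{n+1}(x)}(x)$. Each stage is a Borel least-witness selection over the Borel predicates above, and the standing assumption that $\clusters(x)$ is infinite ensures that the selection never stalls; consequently $\cenum$ is a well-defined Borel map $X \to A^\NN$.

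It remains to verify that $\cluster(\cenum_0(x)), \cluster(\cenum_1(x)), \ldots$ enumerates all of $\clusters(x)$. By construction the listed clusters are pairwise distinct. The key observation is that the indices $i_n(x)$ are strictly increasing: at step $n+1$, every index $i \leq i_n(x)$ is disqualified, since $\phi_{i_n(x)}(x)$ lies in $\cluster(\cenum_n(x))$ and all smaller indices were already excluded at earlier stages. Now fix any $C \in \clusters(x)$ and let $k$ be the least index with $\phi_k(x) \in C$. If $C$ never appeared in the enumeration, then at every stage $n$ the index $k$ would be a valid candidate, forcing $i_n(x) \leq k$; this contradicts strict monotonicity for large $n$. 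Hence every cluster is listed. I do not foresee a substantive obstacle here---once the Borel enumeration $(\phi_n)$ and the Borelness of the $A$-cluster relation are in hand, the remainder is bookkeeping.
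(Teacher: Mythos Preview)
Your argument is correct. The paper itself does not supply a proof, declaring the lemma ``an exercise in Borel combinatorics,'' so there is nothing to compare against; your Luzin--Novikov enumeration followed by greedy Borel least-witness selection is exactly the kind of routine argument the authors have in mind, and your verification that the indices $i_n(x)$ are strictly increasing cleanly handles the surjectivity onto $\clusters(x)$.
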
 

The clusterwise Bernoulli extension is defined by means of the following cocycle. The \emph{cluster cocycle} $c\colon R \to \Sym(\NN)$ is defined as follows. For every $(x,y)\in R$ we let 
$c(x,y)$ be the unique bijection of $\NN$ such that for all 
$i\in \NN$ we have that $\cluster(\cenum_i(x)) = \cluster(\cenum_{c(y,x)(i)}(y))$. One can check that $c$ satisfies the cocycle relation that $c(x,z) = c(x,y)c(y,z)$.

Let us fix $\eps\in (0,1)$. We define the \emph{clusterwise Bernoulli extension} $(Y,\cal H,\nu)$ as follows. First, we set $Y = X \times \{0,1\}^\NN$, and we let $\nu$ be the product of $\mu$ and the Bernoulli measure on $\{0,1\}^\NN$ which assigns the measure $\eps$ to $\{1\}$. Finally we put an edge between $(x,f)$ and $(y,g)$ if $(x,y)\in \cal G$ and $f = g\circ c_{x,y}$. This means that we connect $(x,f)$ and $(y,g)$  if $x$ and $y$ are connected in $\cal G$ and $f$ and $g$ describe the same function on $\clusters(x)=\clusters(y)$. 

The fact that $\cal H$ preserves $\nu$ follows from the fact that bijections of $\NN$ preserve the Bernoulli measure on $\{0,1\}^\NN$ (this is an example of a skew product).  The extension map $\Phi\colon Y \to X$ is defined on vertices by $\Phi(x,f) = x$. This finishes the construction of $(Y,\cal H,\nu)$.

\subsection{Clusters in Kazhdan-optimal partitions}

We now prove Theorem~\ref{thm-hp-urg}. Let us first discuss the proof informally. The goal of the theorem is to show that the parts of a Kazhdan-optimal partition have finitely many clusters. If some of these parts had infinitely many clusters then we could merge a very small amount of these clusters to a neighbouring part. This is made precise by means of a clusterwise Bernoulli extension. By means of this process, we would obtain a new partition with a strictly smaller Kazhdan constant, contradicting Kazhdan-optimality.

\begin{proof}[Proof of Theorem~\ref{thm-hp-urg}]
Let us fix $\La$ an URG with Property (T), a natural number $n \in\NN$, a probability sequence $\al\in \Prob(n)$, and $\eps\in(0,\min(\al))$. After Theorem \ref{thm-optimal-existence}, there exists an ergodic realization $(\cal G,\mu)$ together with a Kazhdan-optimal partition $\pi$. Our aim is to show that for some $i\in [n]$ there exists a non-null Borel $U\subseteq \pi_i$ such that $(\cal G,\mu)$ has finitely many $U$-clusters, for every $i\in [n]$. This is a slightly more general conclusion than that of Theorem \ref{thm-hp-urg}. To recover the conclusion of the latter Theorem one may just set $\alpha = (1/n, \dots, 1/n)$ and $\eps = 1/n^2$.

After permuting the elements of $\al$ and of $\pi$ we may assume that for some $l\in \{1,\ldots, n-1\}$ we have that $\mu(\pi_j) \leq \al_j$ for $j<l$  and $\mu(\pi_j) \geq \al_j$ for $j \geq l$.
 
Consider $A: = \bigcup_{j< l} \pi_j$ and $B:= \bigcup_{j\geq l} \pi_j$. Since $(\cal G,\mu)$ is ergodic, we have that $\mu(\partial_{\cal G}(A,B)) >0$. It follows that for some $a < l$ and $b\geq l$ we have 
\begin{equation}\label{ij}
    \mu(\partial_{\cal G}(\pi_a, \pi_b)) >0.
\end{equation}
Let us very informally describe the strategy now. We want to construct a new partition $\bar 
\pi$ by  ``moving some small amount of $\pi_b$-clusters into 
$\pi_a$''. As such the new partition $\bar \pi$ will miss at least some 
of the edges of $\partial_{\cal G}(\pi_a,\pi_b)$, and hence we will 
have $K(\bar \pi) < K(\pi)$.  Furthermore, the ``small amount'' will be 
specified in such a way that $d_\infty( W(\bar \pi),\al)\leq \eps$, 
which will contradict the Kazhdan-optimality of $\pi$.

Let us now make it precise. First, for $x\in \pi_b$ let $\cluster(x)\subseteq x^{\cal G}$ be the $\pi_b$-cluster of $x$, and for $x\in X_{\cal G}$ let $\clusters(x)$ be the set of all $\pi_b$-clusters contained in $x^{\cal G}$. Let $U\subseteq \pi_b$ be the union of those $\pi_b$-clusters which are connected by an edge in $\cal G$ to an element of $\pi_a$. By~\eqref{ij} we have that $U$ has positive measure. 

If $U$ has finitely many clusters then the proof is finished. Thus let us assume that $U$ does 
not have finitely many clusters. Since $(\cal G,\mu)$ is ergodic, we deduce that for $\mu$-almost every $x\in X_{\cal G}$ we have that $x^{\cal G}$ contains infinitely many $U$-clusters. 

Let $\Phi\colon (\cal H,\nu)\to (\cal G,\mu)$ be the 
$U$-clusterwise Bernoulli extension. Recall that an  element of $X_{\cal H}$ is a pair $(x,f)$ where $x\in X_{\cal G}$ and 
$f\colon \clusters(x) \to \{0,1\}$. 

Let $\bar \pi_i := \Phi^{-1}(\pi_i)$ and  $(Z,\eta, (\nu_z)_{z\in Z})$ an ergodic decomposition of $(\cal H,\nu)$. By Proposition~\ref{prop-ergodic}, we have that for $\eta$-almost every $z\in Z$ we have that $(\cal H,\nu_z) \to (\cal G,\mu)$ is an extension. It follows that  for $\eta$-almost every $z\in Z$ the graphing $(\cal H,\nu_z)$ is an ergodic realization of $\Lambda$ and that $K_{\cal H,\nu_z} (\bar \pi) = K_{\cal G,\mu}(\pi)$.

For $i\in \{0,1\}$, let 
$$
    \bar U_i := \{(x,f)\in X_{\cal H}\colon x\in U, f(\cluster(x)) =i\}
$$ 

Note that  $\eps\geq \nu(\bar U_1)=\eps\cdot \mu(U)>0$. Furthermore, the set 
$$
    Y:=\{ (x,f)\in X_{\cal H}\colon \exists C,D\in \clusters(x)\colon f(C)=0, f(D)=1\}
$$
has $\nu$-measure $1$, since $x^{\cal G}$ contains infinitely many $U$-clusters for $\mu$-almost every $x\in X_{\cal G}$.   Since for every $y\in Y$ we have that $y^{\cal H}$ intersects both $\bar U_0$ and $\bar U_1$, we deduce that for almost every $z\in Z$ we have $\nu_z(\bar U_0) =0 \iff \nu_z(\bar U_1)=0$. By Proposition~\ref{prop-ergodic} we have  $\nu_z(\bar U_0 \cup \bar U_1)= \mu(U) >0$ for almost every $z\in Z$, and so we deduce in particular that for almost every $z\in Z$ we have $\nu_z(\bar U_1)>0$.  Finally we note that $0< \nu(\bar U_1) = \eps\cdot \mu(U) \leq \eps$, and so  we deduce that for some $z\in Z$ the graphing $(\cal H,\nu_z)$ is an ergodic realization of $\Lambda$ such that $K_{\cal H,\nu_b} (\bar \pi) = K_{\cal G,\mu}(\pi)$ and furthermore  $0< \nu_z(\bar U_1)\leq \eps$.

Now we consider the following partition $\bar \rho$ of $(\cal H, \nu_z)$.  We let $\bar \rho_i := \bar \pi_i$ when 
$i\neq a,b$, and $\bar \rho_a := \pi_a \cup \bar U_1$, $\bar \rho_b = \bar \pi_b \setminus \bar U_1$. Since $\nu_z(\bar U_1) \leq \eps$, we have 
$d_\infty(W(\bar \rho), \al) \leq \eps$. 

On the other hand, it is easy to verify that we have 
$$ 
    K_{\cal H, \nu_z}(\bar \rho)  = K_{\cal H,\nu_b} (\bar \pi) - \nu_z(\partial_{\cal H} (\bar \pi_a,\bar U_1)). 
$$ 
Since each cluster defined by $\bar U_1$ contains a vertex connected to 
$\bar \pi_a$, and $\nu_z(\bar U_1) >0$, we deduce that  $\nu_z(\partial_{\cal H} (\bar \pi_a,\bar U_1))>0$, and therefore 
$$
    K_{\cal H, \nu_z}(\bar \rho)< K_{\cal H, \nu_b}(\bar \pi) = K_{\cal G,\mu} (\pi).
$$ 
This contradicts the Kazhdan-optimality of $\pi$, and finishes the proof.
\end{proof}

\section{Point processes on Property (T) groups}\label{sec:pp}

In this section we show that Property (T) of a lcsc group $G$ is characterized by Property (T) of the Palm equivalence relation associated to any of its free invariant point processes. Additionally, we extend the generalised Hutchcroft-Pete theorem to lcsc groups. We begin with a preliminary lemma. 

\begin{lemma}\label{lem:loc-fin}
    The (closed) Voronoi tessellation of a configuration is locally finite: if $\omega \in \MM$ and $h \in G$, then there exists an open neighbourhood of $h$ intersecting nontrivially only finitely many Voronoi cells of $\omega$.
\end{lemma}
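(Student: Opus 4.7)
The plan is to combine properness of the metric $d$ with local finiteness of $\omega$ in a direct way. The case $\omega = \varnothing$ is trivial (there are no Voronoi cells at all), so assume $\omega$ is nonempty. First I would establish that the distance from $h$ to $\omega$ is attained: setting $r := \inf_{g \in \omega} d(g,h)$ and fixing any $g_1 \in \omega$, we have $r \leq d(g_1,h) < \infty$. Since $d$ is proper, the closed ball $\bar B(h, d(g_1,h))$ is compact, and local finiteness of $\omega$ then forces $\omega \cap \bar B(h, d(g_1,h))$ to be finite; minimizing $d(\cdot, h)$ over this finite nonempty set produces some $g_0 \in \omega$ with $d(g_0, h) = r$.

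Next, I would choose the open neighbourhood $U := B(h, 1)$ of $h$ and show that only finitely many Voronoi cells of $\omega$ can meet it. Suppose $x \in U$ lies in $V_g(\omega)$ for some $g \in \omega$. By definition of the Voronoi cell, $d(g,x) \leq d(g_0, x)$, so the triangle inequality gives
\[
d(g,h) \;\leq\; d(g,x) + d(x,h) \;\leq\; d(g_0, x) + 1 \;\leq\; d(g_0, h) + 2 \;=\; r + 2.
\]
Hence every such $g$ lies in $\omega \cap \bar B(h, r+2)$. By properness of $d$ the closed ball $\bar B(h, r+2)$ is compact, and by local finiteness $\omega \cap \bar B(h, r+2)$ is therefore finite. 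This completes the proof.

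There is no real obstacle to overcome here: the result is essentially a bookkeeping exercise combining properness and local finiteness. The only point that requires a sentence of care is Step 1, guaranteeing existence of a nearest point $g_0 \in \omega$; without it the bound $d(g,h) \leq r+2$ would still work after replacing $r$ by any upper bound for the distance from $h$ to $\omega$, so the argument is robust.
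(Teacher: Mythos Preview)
Your proof is correct and follows essentially the same approach as the paper's: both use the triangle inequality to bound which points of $\omega$ can have Voronoi cells meeting a small ball around $h$, then invoke local finiteness of $\omega$ (together with properness of $d$) to conclude. The only cosmetic difference is that the paper chooses a radius $\eps/2$ depending on the gap between the nearest and next-nearest points of $\omega$ to $h$ (thereby identifying the intersecting cells as precisely those of the nearest points), whereas you fix the radius at $1$ and bound the relevant cell centers by $\bar B(h, r+2)$; your version is slightly more streamlined, the paper's slightly more informative, but the idea is the same.
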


\begin{proof}
    Let $R:=d(h,\omega)$, and note that by local finiteness of $\omega$ we have that $R > 0$. Moreover, there are only finitely many elements $g_1, g_2, \ldots, g_n$ of $\omega$ achieving the minimal distance.

    Again by local finiteness, there must exist some $\eps > 0$ such that $$
    B_{R+\eps}(h)\cap \omega = \{g_1,\dots,g_n\}.
    $$
    We claim that $B_{\eps/2} (h)$ intersects only the cells $V_{g_1}(\omega),\dots, V_{g_n}(\omega)$. 

    Observe that for $k \in B_{\eps/2} (h)$ we have that $$
    d(k,\omega) < \frac{\eps}{2} + R.
    $$
    However, we have that $$
    d(h,\omega\backslash \{g_1,\dots,g_n\}) \geq R + \eps.
    $$
    Therefore, we necessarily have that $$
    d (k,\omega\backslash \{g_1,\dots,g_n\}) \geq \frac{\eps}{2} + R.
    $$
 Thus $k$ can only belong to the Voronoi cells determined by $g_1,\dots,g_n$.
\end{proof}

\begin{theorem}\label{TForGroupImpliesTForPalm}
Let $G$ be a lcsc group and $\mu$ a (free) invariant point process on $G$. If $G$ has Property (T), then $(\mathbb M_0 , \mu_0)$ has Property (T).
\end{theorem}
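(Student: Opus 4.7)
The plan is to lift the problem to a unitary representation of $G$, apply Property (T) of $G$ to obtain an invariant vector, then descend to an invariant field for the Palm cber representation. Let $(\pi, \cal H)$ be a representation of $(\MM_0, \cal R, \mu_0)$ admitting normalised almost invariant fields $(\xi_n)$; the goal is to produce a normalised invariant field.

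First I would construct an associated $G$-representation $(\rho, \cal K)$, with $\cal K$ the $L^2$-space of measurable sections $\Phi : \MM \to \bigsqcup_\omega \cal H^{\omega_0}$ with $\Phi(\omega) \in \cal H^{\omega_0}$, under the inner product $\int_\MM \langle \Phi(\omega), \Psi(\omega)\rangle d\mu(\omega)$, and $G$-action
\[
(\rho(g)\Phi)(\omega) := \pi(\omega_0, \omega_g)\Phi(g^{-1}\omega).
\]
The identity $\omega_g = (g^{-1}\omega)_0$ ensures the target fibre matches, and equivariance of the Voronoi rooting together with the cocycle relation for $\pi$ make $\rho$ a well-defined unitary representation. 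The lifted vectors $\Phi_n(\omega) := \xi_n(\omega_0)$ are unit vectors. To see they are almost invariant for $\rho$, compute
\[
\|\rho(g)\Phi_n - \Phi_n\|^2 = \int_\MM \|\pi(\omega_0, \omega_g)\xi_n(\omega_g) - \xi_n(\omega_0)\|^2 d\mu(\omega).
\]
For any compact $Q \subseteq G$ and a.e.~$\omega$, Lemma \ref{lem:loc-fin} confines $\{\omega_g : g \in Q\}$ to a finite set; combining the pointwise-a.e.~almost invariance of $(\xi_n)$ on a countable dense family of group elements with bounded convergence then yields $\sup_{g \in Q}\|\rho(g)\Phi_n - \Phi_n\| \to 0$. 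By Property (T) of $G$, there exists a non-zero $\rho$-invariant vector $\Phi \in \cal K$.

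To descend I would unpack $\rho$-invariance as $\pi(\omega_0, \omega_g)\Phi(g^{-1}\omega) = \Phi(\omega)$ a.s. Specialising to $\omega \in \MM_0$ and $g = h \in \omega$ (so $\omega_0 = \omega$ and $\omega_g = h^{-1}\omega$), this becomes $\pi(\omega, h^{-1}\omega)\Phi(h^{-1}\omega) = \Phi(\omega)$, exactly the $\cal R$-invariance we want. Since $\MM_0$ is $\mu$-null, to make this rigorous I would define the field on $\MM_0$ by
\[
\eta(\omega_0) := |V_0(\omega_0)|^{-1}\int_{V_0(\omega_0)} \pi(\omega_0, (g\omega_0)_0)\Phi(g\omega_0) dg;
\]
$\rho$-invariance forces the integrand to be essentially constant in $g$, so $\eta$ is a well-defined Borel field, and the Voronoi Inversion Formula
\[
\|\Phi\|^2 = \intensity(\mu)\int_{\MM_0}|V_0(\omega_0)| \|\eta(\omega_0)\|^2 d\mu_0(\omega_0)
\]
shows $\eta$ is non-zero. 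Normalising (after passing to an ergodic component if needed) then produces a normalised $\pi$-invariant field.

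The main obstacle will be the uniform-on-compacts estimate for almost invariance of $(\Phi_n)$: the defining condition of almost invariance on $\cal R$ is only pointwise-a.e., but to apply Property (T) of $G$ one needs uniform control over all $g$ in a compact set simultaneously. Local finiteness of the Voronoi tessellation (Lemma \ref{lem:loc-fin}) is precisely what allows this upgrade, but the measure-theoretic details --- choosing a suitable countable dense family of group elements, handling the tie-breaking on Voronoi boundaries, and extracting subsequences so that all the necessary pointwise convergences occur simultaneously --- require genuine care.
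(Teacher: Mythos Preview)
Your proposal is correct and follows essentially the same route as the paper: induce a $G$-representation on $L^2(\MM,\mu;\cal H)$ via $(\rho(g)\Phi)(\omega)=\pi(\omega_0,\omega_g)\Phi(g^{-1}\omega)$, lift the almost invariant fields by $\Phi_n(\omega)=\xi_n(\omega_0)$, apply Property~(T) of $G$, and descend using that a $\rho$-invariant section is essentially constant along each Voronoi cell (the paper simply takes this constant value; your cell-averaging formula gives the same thing).

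The only substantive difference is in the verification of almost invariance. The paper argues by compactness: pick $g_n\in K$ realising the supremum, pass to $g_n\to g_0$, and split $\|\rho(g_n)\tilde\xi_n-\tilde\xi_n\|$ into a ``fixed shift'' term $\|\rho(g_0)\tilde\xi_n-\tilde\xi_n\|$ (handled by almost invariance of $(\xi_n)$ and DCT) and a ``small shift'' term $\|\rho(g_0^{-1}g_n)\tilde\xi_n-\tilde\xi_n\|$ (handled by Lemma~\ref{lem:loc-fin} and DCT). Your idea---that for each $\omega$ the set $\{\omega_g:g\in Q\}$ is finite by compactness plus Lemma~\ref{lem:loc-fin}, so the pointwise sup is a max over finitely many terms each tending to zero, and then DCT applies---is arguably cleaner and avoids the subsequence extraction. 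Your mention of a ``countable dense family of group elements'' is unnecessary and slightly misplaced (the finitely many rerooted configurations $\omega_g$ depend on $\omega$, not on a fixed dense set in $G$), but the argument goes through without it.
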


\begin{proof}
Let $(\pi, \cal H)$ be a representation of $(\mathbb M_0 , \mu_0)$ with a sequence of almost invariant fields $(\xi_n)_n$ such that $\|\xi_n (\omega)\|_{\cal H_\omega} = 1$ a.s. By ergodicity we may assume that the field of Hilbert spaces $\cal H$ is constant. In a slight abuse of notation we let $\cal H$ denote this Hilbert space.

Our goal is to show that $(\pi,\cal H)$ admits an non-trivial invariant field. We achieve this by means of an auxiliary  representation $\rho$ of $G$ on the Hilbert subspace $\cal E$ of $L^2 (\mathbb M , \mu; \cal H)$ generated by the fundamental fields of $\cal H$. This representation is defined by \begin{equation*}
(\rho (g) f)(\omega) = \pi( \omega_0, \omega_{g}) f(g^{-1} \omega),
\end{equation*} 
for every $f \in  \cal E $ and $\omega \in \mathbb M$. Each $\rho (g)$ is unitary as it is the composition of a unitary with a measure-preserving shift. The map $\rho : G \rightarrow \cal U(L^2 (\cal M , \mu; \cal H))$ is also a homomorphism since, for every $f \in \cal E$ and $\omega \in \mathbb M$, we have that for any $g,h\in G$, \begin{align*}
(\rho(h)\rho(g) f) (\omega) &= \pi(\omega_0,\omega_{h})(\rho(g) f)(h^{-1} \omega) \\
&= \pi(\omega_0,\omega_{h}) \pi((h^{-1} \omega)_0, (h^{-1}\omega)_{g}) f((hg)^{-1}\omega)\\
&=\pi(\omega_0,\omega_{h}) \pi(\omega_h, \omega_{hg}) f((hg)^{-1}\omega)\\
&= (\rho (hg) f) (\omega).
\end{align*}
It is easy to see that for every $f\in \cal E$, the map $g\in G \mapsto \left< \rho(g) f, f\right>$ is measurable. Therefore $\rho$ is strongly continuous \cite[Lemma A.6.2]{bhv} and $(\rho,\cal E)$ is a representation of $G$.

To each field $\xi_n \colon \MM_0 \rightarrow \cal H$ we associate the function $\tilde\xi_n \in \cal E$ by letting $$
\tilde \xi_n (\omega) = \xi_n (\omega_0)
$$
for each $\omega \in \mathbb M$. The functions $\tilde \xi_n$ are indeed in $\cal E$. Moreover, by the Voronoi Inversion Formula we have that for every $n\in \NN$, \begin{equation*}
\|\tilde\xi_n\|^2 = \intensity (\mu) \int_{\mathbb M_0} \lambda (V_0 (\omega)) \|\xi_n (\omega)\|^2 d\mu_0 (\omega) = 1.
\end{equation*}

The following expression will be useful later. For every $n\in \NN$ and $\omega \in \MM$, we have that\begin{align*}
(\rho (g) \tilde\xi_n) (\omega) &= \pi( \omega_0, \omega_g) \xi_n ((g^{-1}\omega)_0 )\\
&= \pi( \omega_0, \omega_g) \xi_n (\omega_g) .
\end{align*}

We now show that the representation $(\rho,\cal E)$ almost has invariant vectors. To see this, fix a compact set $K \subseteq G$. For each $n \in \NN$, by strong continuity, we have that there exists $g_n \in K$ such that \begin{equation*}
\sup_{g\in K} \|\rho (g) \tilde\xi_n - \tilde\xi_n\| = \|\rho (g_n) \tilde\xi_n - \tilde\xi_n\|.
\end{equation*}
By compactness, upon possibly passing to a subsequence, we may assume that that $g_n \rightarrow g_0 \in K$, so 
\begin{equation*}
\sup_{g\in K} \|\rho (g) \tilde\xi_n - \tilde\xi_n\| \leq \|\rho(g_0^{-1}g_n)\tilde\xi_n - \tilde\xi_n\| + \|\rho(g_0) \tilde\xi_n - \tilde\xi_n\|.
\end{equation*}

We now proceed to show that both terms in the latter sum converge to zero. Let us begin the analysis with the second term. Observe that \begin{align*}
    \|\rho(g_0) \tilde\xi_n - \tilde\xi_n\|^2
    &= \intensity (\mu) \int_{\MM_0} \int_{V_0 (\omega)} \|\rho (g_0) \tilde\xi_n (h\omega) - \tilde\xi_n (h\omega)\|^2 dh d\mu_0 (\omega)\\
    &= \intensity (\mu) \int_{\MM_0} \int_{V_0 (\omega)}\|\pi((h\omega)_0,(h\omega)_{g_0})\tilde\xi_n (g_0^{-1} h\omega) - \xi_n ((h\omega)_0)\|^2 dh d\mu_0 (\omega)\\
    &=\intensity (\mu) \int_{\MM_0} \int_{V_0 (\omega)}\|\pi(\omega_{h^{-1}},\omega_{h^{-1}g_0})\xi_n (\omega_{h^{-1}g_0}) - \xi_n (\omega_{h^{-1}})\|^2 dh d\mu_0 (\omega).
\end{align*}
Hence, by almost invariance of the $\xi_n$ and the Dominated Convergence Theorem, we deduce that $\|\rho(g_0) \tilde \xi_n - \tilde\xi_n\| \rightarrow 0$.

It is then only left for us to show that if $(g_n)_n$ is a sequence in $G$ such that $g_n \rightarrow 0$, then $\|\rho(g_n) \tilde\xi_n - \tilde\xi_n\| \rightarrow 0$.  In order to prove this we start by applying the Voronoi Inversion Formula:
 \begin{align*}
 &\|\rho (g_n) \tilde\xi_n - \tilde\xi_n\|^2 \\
 &= \int_{\mathbb M}  \|\pi( \omega_0,  \omega_{g_n}) \xi_n (\omega_{g_n}) - \xi_n (\omega_0) \|^2 d\mu (\omega)\\
&= \intensity (\mu) \int_{\mathbb M_0} \int_{V_0 (\omega)} \|\pi( (h\omega)_0,  (h\omega)_{g_n}) \xi_n ((h\omega)_{g_n}) - \xi_n ((h\omega)_0) \|^2 dh d\mu_0 (\omega)\\
&= \intensity (\mu) \int_{\mathbb M_0} \int_{V_0 (\omega)} \|\pi( \omega_{h^{-1}},  \omega_{h^{-1}g_n}) \xi_n (\omega_{h^{-1}g_n}) - \xi_n (\omega_{h^{-1}}) \|^2 dh d\mu_0 (\omega)\\
\end{align*}
Note that for $\mu_0$-almost every $\omega \in \mathbb  M_0$, \begin{equation*}
\int_{V_0 (\omega)} \|\pi( \omega_{h^{-1}},  \omega_{h^{-1}g_n}) \xi_n (\omega_{h^{-1}g_n}) - \xi_n (\omega_{h^{-1}}) \|^2 dh \leq 4 \lambda (V_0 (\omega)),
\end{equation*}
and $\int_{\MM_0}\lambda (V_0 (\omega))]d\mu_0 (\omega) = \intensity(\mu)^{-1}$, so by the dominated Convergence Theorem it suffices to show that for $\mu_0$-almost every $\omega \in \mathbb  M_0$, 
\begin{equation*}
\int_{V_0 (\omega)} \|\pi( \omega_{h^{-1}},  \omega_{h^{-1}g_n}) \xi_n (\omega_{h^{-1}g_n}) - \xi_n (\omega_{h^{-1}}) \|^2 dh.
\end{equation*}
Again by the Dominated Convergence Theorem, it suffices to show that for $\mu_0$-almost every $\omega\in \MM_0$ and for Haar-almost every $h \in V_0 (\omega)$, $$
\|\pi( \omega_{h^{-1}},  \omega_{h^{-1}g_n}) \xi_n (\omega_{h^{-1}g_n}) - \xi_n (\omega_{h^{-1}}) \|\to 0.
$$

By Lemma \ref{lem:loc-fin}, for each $h \in V_0 (\omega)$ there exists an open neighbourhood of the identity $U_h$ such that $h^{-1} U_h$ intersects only finitely many other Voronoi cells. These cells are determined by a finite subset $F_h \subseteq \omega$. We then have that for $n$ sufficiently large, $$
 \|\pi( \omega_{h^{-1}},  \omega_{h^{-1}g_n}) \xi_n (\omega_{h^{-1}g_n}) - \xi_n (\omega_{h^{-1}}) \| \leq \sum_{g\in F_h} \|\pi( \omega_{h^{-1}},  \omega_{g}) \xi_n (\omega_{g}) - \xi_n (\omega_{h^{-1}}) \|,
$$
and the latter converge to $0$ on a $\mu_0$-conull re-rooting invariant set. We then conclude that $(\tilde\xi_n)_n$ is an almost invariant sequence of vectors for the representation $\rho$.

Since $G$ has Property (T), then there exists an invariant function $\tilde\eta \in  \cal E$. By construction of $\rho$, we must have that for $\mu_0$-almost every $\omega \in \mathbb M_0$, the function $h\in V_0 (\omega) \mapsto \tilde\eta (h \omega)$ is essentially constant, taking a value which we denote by $\eta (\omega)$. Now it is easy to see that the field $\omega \in \mathbb M_0 \mapsto \eta (\omega)$ is invariant for $(\pi,\cal H)$. It follows that $(\mathbb M_0 , \mu_0)$ has Property (T) by definition.
\end{proof}

Now we prove the converse of the above Theorem.

\begin{theorem}\label{otherway}
Let $G$ be a lcsc group and $\mu$ a free invariant point process. If $(\mathbb M_0,\mu_0)$ has Property (T), then $G$ has Property (T).
\end{theorem}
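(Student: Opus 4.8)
The plan is to argue by contraposition: assuming $G$ lacks Property (T), I will produce an ergodic extension of the Palm equivalence relation $(\mathbb M_0,\mathcal R,\mu_0)$ that fails to be strongly ergodic, which contradicts Property (T) of $(\mathbb M_0,\mathcal R,\mu_0)$ by Propositions~\ref{prop:passup} and~\ref{prop:ergodiplust} (equivalently, by Theorem~\ref{thm:CW}). We may assume $G$ is unimodular, since otherwise $(\mathbb M_0,\mathcal R,\mu_0)$ is not pmp and the statement is vacuous, and the hypothesis forces $G\acts(\mathbb M,\mu)$ — hence $\mu$ — to be ergodic. So suppose $G$ does not have Property (T). Then $G$ admits an orthogonal representation with almost invariant unit vectors but no non-zero invariant vectors, and the Gaussian construction (as in the proof of Theorem~\ref{thm:CW}, or classically \cite{cw}) yields an ergodic pmp action $G\acts(X,\nu)$ which is not strongly ergodic.

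Next I would couple $(X,\nu)$ with the point process. Let $(W,\lambda)$ be an ergodic component of $G\acts(X\times\mathbb M,\nu\otimes\mu)$. Since $\nu$ and $\mu$ are ergodic, Proposition~\ref{prop-ergodic} shows that the two coordinate projections push $\lambda$ forward to $\nu$ and to $\mu$, so $(W,\lambda)$ is an ergodic joining of $(X,\nu)$ and $(\mathbb M,\mu)$. As $\mu$ is essentially free, so is $G\acts(W,\lambda)$, and the second projection $p\colon W\to\mathbb M$ is a factor map onto $G\acts(\mathbb M,\mu)$ that is bijective on every $G$-orbit (both actions being essentially free). Passing to a cross-section of $W$ lying over a cross-section of $\mathbb M$ that realises $(\mathbb M_0,\mathcal R,\mu_0)$ — equivalently, using functoriality of the Palm equivalence relation under extensions of point processes, as in Section~\ref{PPprelims} and \cite[Proposition 8.3]{BowenHoffIoana} — the map $p$ induces a class-bijective extension $(\tilde{\mathbb M}_0,\tilde{\mathcal R},\tilde\mu_0)$ of $(\mathbb M_0,\mathcal R,\mu_0)$, and this extension is ergodic because $G\acts(W,\lambda)$ is.

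On one hand $(\tilde{\mathcal R},\tilde\mu_0)$ is an ergodic extension of $(\mathcal R,\mu_0)$, which has Property (T), so by Proposition~\ref{prop:passup} it has Property (T) and by Proposition~\ref{prop:ergodiplust} it is strongly ergodic. On the other hand $(\tilde{\mathcal R},\tilde\mu_0)$ is the cross-section equivalence relation of $G\acts(W,\lambda)$, which factors onto the non strongly ergodic action $(X,\nu)$ and is therefore not strongly ergodic; since strong ergodicity of a pmp action is equivalent to strong ergodicity of its cross-section equivalence relation, $(\tilde{\mathcal R},\tilde\mu_0)$ is not strongly ergodic — a contradiction, so $G$ has Property (T). The point I expect to be most delicate is the middle step: checking that the joining, equipped with a cross-section pulled back from $\mathbb M$, genuinely yields an ergodic \emph{class-bijective} extension of $(\mathbb M_0,\mathcal R,\mu_0)$ (here essential freeness of $\mu$ is what guarantees $p$ collapses no orbits), and that strong ergodicity transfers between an action and its cross-section equivalence relation; both are standard facts about Palm theory and cross-sections but require care with the bookkeeping. (An alternative, more representation-theoretic route builds the orthogonal representation $\rho\circ c$ of $\mathcal R$ from the rerooting cocycle $c\colon\mathcal R\to G$ and a $G$-representation $\rho$ with almost invariant vectors, and from an invariant field for $\rho\circ c$ manufactures a $\rho$-equivariant map of $\mathbb M$ into the unit sphere of $\rho$, whose covariance operator would be a non-zero compact operator commuting with $\rho$; but there the obstacle becomes the need to know that $\rho$ may be taken weakly mixing, which is itself a non-trivial input.)
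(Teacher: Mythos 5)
Your proposal is correct in outline but takes a genuinely different route from the paper. The paper argues directly: given a unitary representation $(\pi,H)$ of $G$ with almost invariant unit vectors $\xi_n$, one builds a representation of $(\MM_0,\cal R,\mu_0)$ on the constant field $\cal H_\omega := H$ via the rerooting cocycle, observes the constant fields $\omega\mapsto\xi_n$ are almost invariant, applies Pichot's approximation lemma to get invariant fields $\eta_n$ close to $\xi_n$, and then \emph{averages $\pi(h)\eta_n(\omega)$ over the Voronoi cell of the identity} (using the Voronoi inversion formula) to manufacture nonzero $\pi$-invariant vectors. This is entirely self-contained and avoids the Connes--Weiss machinery. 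Your approach is by contraposition: Gaussian construction of a non-strongly-ergodic ergodic $G$-action, an ergodic joining with $(\MM,\mu)$, and then passing to cross-sections to contradict Property (T) of the Palm cber via Propositions \ref{prop:passup} and \ref{prop:ergodiplust}. This is conceptually appealing and uses the cber machinery already developed in the paper, but it leans on two external inputs not proved or cited there: (i) the Connes--Weiss theorem for noncompact lcsc groups, i.e.\ that a noncompact lcsc group without (T) admits an ergodic pmp action that is not strongly ergodic; and (ii) the fact that strong ergodicity of a free ergodic pmp action of an lcsc group is equivalent to strong ergodicity of any of its cross-section equivalence relations. You flag (ii) as delicate, and it genuinely is: an asymptotically invariant sequence $A_n$ for the $G$-action is, by definition, only asymptotically invariant over \emph{compact} subsets of $G$, whereas full group elements of the cross-section cber move points by unbounded shift amounts, so the transfer of asymptotic invariance requires a truncation/cutoff argument (bounding the contribution of large shifts uniformly in $n$ via finite intensity). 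This is standard but would need to be written out. Finally, the alternative route you sketch at the very end (pulling back a $G$-representation $\rho$ through the rerooting cocycle and averaging) is in fact much closer to what the paper actually does; the weak-mixing issue you anticipate does not arise there because the paper does not try to build a covariance operator but simply integrates $\pi(h)\eta_n(\omega)$ against $dh\,d\mu_0$ and checks invariance by a change of variables, so no spectral hypotheses on $\rho$ are needed.
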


\begin{proof}
For convenience, in this proof we will use the groupoid description of the Palm equivalence relation, so for $\omega \in \MM_0$ and $g\in \omega$ we will let $(g,\omega)$ denote the edge $(g^{-1}\omega,\omega)$ of the re-rooting equivalence relation.

Let $(\pi, H)$ be a unitary representation of $G$ with an almost invariant sequence of normalised vectors $(\xi_n)_n$. Consider the representation  $(\tilde\pi, \cal H)$ of $\MM_0$ defined by  the measurable field of Hilbert spaces $\omega \in \mathbb M_0 \mapsto \cal H_\omega := H $ and $\tilde\pi (g,\omega) : \cal H_\omega \rightarrow \cal H_{g^{-1}\omega}$ defined by $\tilde\pi (g, \omega) = \pi(g)$. It is easy to see that the sequence of constant fields $(\omega \in \mathbb M_0 \mapsto \xi_n)_{n\in \NN}$ is almost invariant for $(\tilde\pi, \cal H)$, so by Pichot's Lemma there is a sequence of invariant fields $\eta_n$ such that $\|\eta_n (\omega)\| = 1$ and $\|\eta_n (\omega) - \xi_n \| \rightarrow 0$ a.s. 

We claim that the vectors
\begin{equation*}
\rho_n = \intensity (\mu)\int_{\mathbb M_0} \int_{V_0 (\omega)} \pi(h)\eta_n (\omega)dh d\mu_0 (\omega) \in H
\end{equation*} 
are $(\pi, H)$-invariant and non-null for sufficiently large $n$.

Firstly, we show that they are non-null for sufficiently large $n$. For $\pi$ being unitary, \begin{align*}
\frac{1}{\intensity (\mu)}\|\rho_n - \xi_n\| &\leq \int_{\mathbb M_0} \int_{V_0 (\omega)} \|\pi(h)\eta_n (\omega) - \xi_n\|dh d\mu_0 (\omega) \\
&\leq \int_{\mathbb M_0} \lambda (V_0 (\omega))\|\eta_n (\omega) - \xi_n\| d\mu_0 (\omega) \\
&+ \int_{\mathbb M_0} \int_{V_0 (\omega)} \|\pi(h)\xi_n  - \xi_n\| dh d\mu_0 (\omega),
\end{align*}
where $\lambda$ is the Haar measure on $G$. Let us study the two terms separately.

The integrand of the first term may be bounded as \begin{equation*}
\lambda (V_0 (\omega))\|\eta_n (\omega) - \xi_n\|  \leq 2 \lambda (V_0(\omega)),
\end{equation*}
and $\int_{\mathbb M_0}\lambda (V_0(\omega))d \mu_0 (\omega) = \intensity(\mu)^{-1} < \infty$, where $\gamma$ is the intensity of the process. Hence, the Dominated Convergence Theorem applies and, as $\|\eta_n (\omega) - \xi_n\| \rightarrow 0$ a.s., then
 \begin{equation*}
\int_{\MM_0} \lambda(V_0 (\omega)) \|\eta_n (\omega) - \xi_n\| d\mu_0 (\omega)\to 0.
\end{equation*}

Now, regarding the second term, let $(K_m)_m$ be an increasing sequence of compact subsets of $G$ such that $G = \bigcup_m K_m$. Then \begin{align*}
\int_{\mathbb M_0} \int_{V_0 (\omega)} &\|\pi(h)\xi_n  - \xi_n\| dh d\mu_0 (\omega) \\
&= \int_{\mathbb M_0} \left(\int_{V_0 (\omega)\cap K_m} \|\pi(h)\xi_n  - \xi_n\| dh + \int_{V_0 (\omega)\backslash K_m} \|\pi(h)\xi_n  - \xi_n\| dh \right) d\mu_0 (\omega) \\
& \leq \left(\sup_{g \in K_m}  \|\pi(g)\xi_n  - \xi_n\|\right) \E_{\mu_0} [\lambda (V_0 (\omega) \cap K_m)] + 2 \E_{\mu_0} [\lambda (C_0 (\omega)\backslash K_m)] \\
&\leq \left(\sup_{g \in K_m}  \|\pi(g)\xi_n  - \xi_n\|\right)\intensity(\mu)^{-1} + 2 \E_{\mu_0} [\lambda (V_0 (\omega)\backslash K_m)].
\end{align*}
By the Dominated Convergence Theorem, \begin{equation*}
\lim_{m \rightarrow \infty} \E_{\mu_0} [\lambda (V_0 (\omega)\backslash K_m)] =  \E_{\mu_0} [\lim_{m \rightarrow \infty} \lambda (V_0 (\omega)\backslash K_m)] = 0,
\end{equation*} 
so $ 2 \E_{\mu_0} [\lambda (V_0 (\omega)\backslash K_M)] \to 0$. By almost invariance of the sequence $\xi_n$,  we also have that $$
\left(\sup_{g \in K_M}  \|\pi(g)\xi_n  - \xi_n\|\right) \rightarrow 0,$$
so again $\left(\sup_{g \in K_M}  \|\pi(g)\xi_n  - \xi_n\|\right)\gamma^{-1} \to 0$.

The above implies that for $n$ sufficiently large, $\|\rho_n - \xi_n \| < 1$, so $\rho_n \neq 0$.

We conclude by showing that the $\rho_n$ are all $\pi$-invariant. By the Voronoi Inversion Formula,
\begin{equation*}
\rho_n = \int_{\mathbb M}  \pi (X_0 (\omega)) \eta_n (\omega_0) d\mu (\omega) .
\end{equation*}
Therefore, for any $g\in G$,
\begin{align*}
\pi (g)\rho_n &=  \int_{\mathbb M}  \pi (gX_0 (\omega)) \eta_n (\omega_0) d\mu (\omega) && \\
&= \int_{\mathbb M}\pi (g X_0(g^{-1} \omega)) \eta_n ( (g^{-1}\omega)_0) d \mu (\omega) && (\mu\text{-invariance})\\
&= \int_{\mathbb M}\pi ( X_g( \omega)) \eta_n (X_0 (g^{-1}\omega)^{-1}g^{-1}\omega) d \mu (\omega) && (X_g (\omega) = g X_0 (g^{-1} \omega))\\
&= \int_{\mathbb M}\pi ( X_g( \omega)) \eta_n (X_g( \omega)^{-1} X_0 (\omega) X_0(\omega)^{-1} \omega)) d \mu (\omega) && \\
&= \int_{\mathbb M}\pi ( X_g( \omega)) \pi (X_g( \omega)^{-1} X_0 (\omega) )\eta_n (X_0(\omega)^{-1} \omega)) d \mu (\omega) && \\
&= \int_{\mathbb M}\pi ( X_0( \omega)) \eta_n ( X_0(\omega)^{-1} \omega)) d \mu (\omega) && (\eta_n\ \text{invariant})\\
&= \rho_n. &&
\end{align*}
Therefore, $(\pi, H)$ has invariant vectors. It follows that $G$ has Property (T) by definition.
\end{proof}

\begin{theorem}\label{thm:realgs}
    Let $G$ be a noncompact lcsc group with Property (T). Then $G$ has cost one.
\end{theorem}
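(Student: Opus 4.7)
My plan is to reduce the theorem to the cber form of the Hutchcroft--Pete theorem (Theorem \ref{thm:HP}) via the Palm correspondence developed in Section \ref{PPprelims}. To begin I would pick any essentially free invariant point process $\mu$ on $G$ of finite positive intensity, such as a Poisson point process. By Theorem \ref{TForGroupImpliesTForPalm}, its Palm equivalence relation $(\MM_0,\cal R,\mu_0)$ has measured Property (T), and since $G$ is noncompact this relation is aperiodic (Section \ref{PPprelims}), so in particular $\cost(\MM_0,\cal R,\mu_0)\ge 1$.

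Next I would apply Theorem \ref{thm:HP} to produce a (class--bijective) extension $(\tilde X,\tilde R,\tilde{\mu}_0)$ of the Palm relation with $\cost(\tilde X,\tilde R,\tilde{\mu}_0)=1$. To transport this back to an object on $G$, I would invoke Proposition 8.3 of \cite{BowenHoffIoana}, referenced in Section \ref{PPprelims}: every class--bijective extension of a Palm (equivalently, cross--section) equivalence relation is itself the Palm relation of some extension of the ambient action, and this correspondence preserves intensity as well as essential freeness. The upshot is an essentially free invariant (marked) point process $\tilde\mu$ on $G$ with $\intensity(\tilde\mu)=\intensity(\mu)$ whose Palm relation is $(\tilde X,\tilde R,\tilde{\mu}_0)$. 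Plugging into the definition of point process cost then gives
\[
\cost(\tilde\mu)-1 \;=\; \intensity(\tilde\mu)\bigl(\cost(\tilde X,\tilde R,\tilde{\mu}_0)-1\bigr) \;=\; \intensity(\mu)\cdot 0 \;=\; 0,
\]
so $\cost(\tilde\mu)=1$ and hence $\cost(G)\le 1$.

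For the matching lower bound I would argue directly: by noncompactness of $G$, the Palm relation of every essentially free invariant point process $\nu$ on $G$ is aperiodic, so $\cost(\MM_0,\cal R,\nu_0)\ge 1$ and hence $\cost(\nu)\ge 1$ from the point process cost formula. Taking infimum over $\nu$ gives $\cost(G)\ge 1$, and combining with the opposite inequality yields $\cost(G)=1$.

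The main obstacle is the lifting step in the second paragraph: the Hutchcroft--Pete theorem delivers only an abstract cost--$1$ class--bijective extension of the Palm cber, whereas the cost of $G$ is defined through point processes on $G$ itself. The content of Proposition 8.3 of \cite{BowenHoffIoana}, together with the cross--section dictionary recalled in Section \ref{PPprelims}, is exactly what is needed to realise that abstract extension as a (marked) point process with the same intensity, and I would invoke this as a black box. Everything else is a direct unravelling of definitions against Theorems \ref{TForGroupImpliesTForPalm} and \ref{thm:HP}.
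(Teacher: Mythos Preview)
Your proposal is correct and follows essentially the same route as the paper: both arguments apply Theorem \ref{TForGroupImpliesTForPalm} to obtain Property (T) for the Palm/cross-section equivalence relation, invoke Theorem \ref{thm:HP} to produce a cost-one class-bijective extension, and then use the lifting fact from \cite{BowenHoffIoana} recorded in Section \ref{PPprelims} to realise that extension as a cross-section of an extension of the ambient $G$-action. Your write-up is in fact more detailed than the paper's (you spell out the lower bound and the cost computation explicitly); note in passing that the intensity-preservation claim, while true, is not actually needed, since $\cost(\tilde X,\tilde R,\tilde\mu_0)=1$ already forces $\cost(\tilde\mu)-1=\intensity(\tilde\mu)\cdot 0=0$ for any finite positive intensity.
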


\begin{proof}
    Let $G \acts (X, \mu)$ be an essentially free action of $G$, and $Y \subseteq X$ a cross-section. Then the associated cross-section equivalence relation has Property (T) by Theorem \ref{TForGroupImpliesTForPalm}. Therefore it admits a cost one extension by Theorem \ref{thm:HP}. As discussed in Section \ref{PPprelims}, this extension of the cross-section equivalence relation arises as the cross-section of some extension of $G \acts (X,\mu)$, and therefore $G$ has cost one.
\end{proof}

\section*{Appendix. The proof for countable groups}

In this appendix we give a streamlined, self-contained proof of the main result of the paper (Theorem \ref{thm:HP.intro}) for countable groups. 

\subsection{Cost, probabilistically}

\begin{definition}
    The \emph{configuration space} of $\Gamma$ is $\MM = \{0,1\}^\Gamma$ with the usual left shift action. We refer to elements $\omega \in \MM$ as \emph{configurations}.

    For $d \in \NN$, we denote by $[d]^\Gamma$ the space of \emph{$d$-colourings of $\Gamma$}. Here $[d] = \{1, 2, \ldots, d\}$, and we think of its elements $c \in [d]$ as coming from a finite set of colours.
\end{definition}

We equip both spaces $\MM$ and $[d]^\Gamma$ with the product topology. 

Configurations can be thought of as subsets of $\Gamma$; an element of $\Gamma$ belongs to the subset if and only if it is assigned 1 by the configuration. A $d$-colouring of $\Gamma$ is simply an assignment of a colour to each element. In particular, there are no graph theoretic restrictions on which colours are assigned to each vertex.

Recall that a \emph{random element} $X$ of a measurable space $\Xi$ is a measurable function $X \colon \Omega\to \Xi$, where $(\Omega, \PP)$ is a standard probability space. The \emph{distribution} of $X$ is then the pushforward measure $X_*(\PP)$.

\begin{definition}
    A \emph{random subset} of $\Gamma$ is a random element $\Pi$ of $\MM$. It is \emph{invariant} if $\gamma \Pi$ and $\Pi$ have the same distribution for all $\gamma \in \Gamma$.

    Similarly, an \emph{invariant random $d$-colouring} of $\Gamma$ is a random element $\Upsilon$ of $[d]^\Gamma$ such that $\gamma \Upsilon$ has the same distribution as $\Upsilon$ for all $\gamma\in \Gamma$.  
\end{definition}

If $\Upsilon$ is an invariant random $d$-colouring of $\Gamma$, each colour $c \in [d]$ defines an invariant random subset $\Upsilon_c$ consisting of all points coloured $c$ in $\Upsilon$. Explicitly,
    \[
        \Upsilon_c := \{ \gamma \in \Gamma \mid \Upsilon(\gamma) = c \} = \Upsilon^{-1}(c).
    \]

\begin{definition}
    The \emph{intensity} of an invariant random subset $\Pi$ is
    \[ 
        \intensity(\Pi) = \PP[o \in \Pi],
    \]
    where $o \in \Gamma$ denotes the identity.
\end{definition}

\begin{remark}
    By invariance of $\Pi$, if $\Gamma$ is infinite and $\intensity(\Pi) > 0$, then $\Pi$ has infinitely many points almost surely.
\end{remark}

Informally speaking, the \emph{cost} of an invariant random object is the ``cheapest way'' to ``connect it up''. We do this by equipping its points with a connected graph structure.

\begin{definition}
    A \emph{factor graph (of $\Pi$)} is a measurable and equivariant map $\mathscr{G} : \MM \to \{0,1\}^{\Gamma \times \Gamma}$ such that $\mathscr{G}(\Pi) \subseteq \Pi \times \Pi$ almost surely. We view $\mathscr{G}(\Pi)$ as defining a (directed) graph structure on $\Pi$. 

    The \emph{cost} of an invariant random subset $\Pi$ is defined by
    \[
        \cost(\Pi) -1 = \frac{1}{2} \inf_{\mathscr{G} }\EE[\deg_o(\mathscr{G}(\Pi))] - \intensity(\Pi),
    \]
    where the infimum ranges over all connected factor graphs of $\Pi$.
\end{definition}

\begin{example}
     If $\Pi = \Gamma$ almost surely, then its cost is the \emph{rank} of $\Gamma$, that is, the minimum size of a generating set.
\end{example}

\begin{remark}
    The above notion of cost connects with the usual notion of cost, as in \cite{Gab}, as follows. Introduce the subset of \emph{rooted configurations}
    \[
        \MMo = \{ \omega \in \MM \mid o \in \omega \},
    \]
    and note that $\MMo$ is a complete section for the action $\Gamma \acts \MM \setminus \{\empt\}$. The above definition is Gaboriau's induction formula for this section.
\end{remark}

Now let $\Gamma = \langle S \rangle$ be finitely generated by a symmetric subset $S \subset \Gamma$. 

\begin{definition}
    If $\Pi$ is an invariant random subset, a \emph{cluster of $\Pi$} is the set of vertices of a connected component of the subgraph of $\Cay(\Gamma, S)$ induced by $\Pi$.
\end{definition}

\begin{remark}
    If $\Pi$ has positive intensity and the subgraph of $\Cay(\Gamma,S)$ it induces has finitely many components, then $\Pi$ must have \emph{at least} one cluster.
\end{remark}

The next lemma is essentially immediate from the definitions:

\begin{lemma}
    If $\Pi \subset \Gamma$ is an invariant random subset with intensity at most $\iota$ and a unique cluster, then
    \[
        \cost(\Pi) \leq 1 + \iota \abs{S}.
    \]
\end{lemma}

\begin{corollary}\label{cor:fin.clusters}
    If $\Gamma$ admits a nonempty invariant random subset $\Pi$ with intensity at most $\iota$ and \emph{finitely many} clusters, then there exists an invariant random subset $\widetilde{\Pi}$ of $\Gamma$ with intensity at most $\iota$ and 
    \[
        \cost(\widetilde{\Pi}) \leq 1 + \iota \abs{S}.
    \]
\end{corollary}

The corollary follows from the lemma in two different ways:

\begin{enumerate}
    \item One can use a vanishing sequence of markers to directly show that $\widetilde{\Pi} = \Pi$ satisfies the inequality, or
    \item One can let $\widetilde{\Pi}$ be one of the clusters of $\Pi$ chosen uniformly at random, and observe that $\intensity(\widetilde{\Pi}) < \intensity(\Pi)$.
\end{enumerate}

The first method has the advantage of not requiring any additional randomness (that is, passing to an extension of the implicit probability space). However, as our goal is to simply produce a cost one action, we are free to pass to whichever extension we please.

Recall that if $\Gamma \acts (X_i, \mu_i)$ are pmp actions, then the cost of their product is at most the cost of any of the individual factors. Symbolically:
\[
    \cost\left(\Gamma \acts \left(\prod_i X_i, \bigotimes_i \mu_i\right)\right) \leq \cost(\Gamma \acts (X_i, \mu_i)),
\]
This follows as cost is nondecreasing under factors (here we take the coordinate projection). This observation and the above lemma implies:

\begin{proposition}\label{StrategyIdea}
    Suppose $\Gamma$ admits nonempty invariant random subsets with finitely many clusters and arbitrarily small intensity. Then $\Gamma$ admits a cost one action.
\end{proposition}

\subsection{Property (T) and weak convergence of colourings}

In this appendix, we will use the following characterization of Property (T) as a definition.

\begin{theorem}[Glasner-Weiss \cite{GW}]\label{GlasnerWeiss}
    A group has Property (T) if and only if for every $d \geq 2$ the set of \emph{ergodic} invariant $d$-colourings is closed (in the weak topology).
\end{theorem}

A Borel subset $U \subseteq [d]^\Gamma$ is \emph{invariant} if $\gamma U = U$ for every $\gamma \in \Gamma$. Recall that an invariant random $d$-colouring $\Upsilon$ is \emph{ergodic} if $\PP[\Upsilon \in U] = 0$ or $1$ for all invariant subsets $U \subseteq [d]^\Gamma$.

The space of probability measures on $[d]^\Gamma$, denoted by $\Prob ([d]^\Gamma)$ is a compact space when endowed with the \emph{weak topology}. Recall that a sequence of probability measures $(\mu_n)_{n\in \mathbb N}$ weakly converges to $\mu$ if for any $F \subset \Gamma$ finite and $\alpha \in [d]^F$ we have
\[
\mu_n(\{\omega \in [d]^\Gamma : \omega|_F = \alpha\}) \to \mu (\{\omega \in [d]^\Gamma : \omega|_F = \alpha\}),
\]
that is, if the statistical behaviour of the random colourings in any finite window tends to that of the limiting random colouring.
A sequence of invariant random $d$-colourings $(\Upsilon_n)_{n\in \mathbb N}$ converges to $\Upsilon$ if the corresponding sequences of distributions weakly converge.

\subsection{Invariant random equivalence relations}

Invariant random equivalence relations (IREs) on groups were introduced in \cite{TDthesis} and later studied in \cite{KechrisSpaceOfEqRels} and, more recently, in \cite{IREs}.

For a countable group $\Gamma$, we denote by $\cal E_\Gamma \subseteq 2^\Gamma$ the set of equivalence relations on $\Gamma$ endowed with the subspace topology. The group $\Gamma$ acts on $\cal E_\Gamma$ by the left shift, i.e., $\gamma r (x,y) = r (\gamma^{-1}x, \gamma^{-1}y)$
for $\gamma, x,y \in \Gamma$. We write $x r y$ if and only if $r(x,y) = 1$.

\begin{definition}
    An \textbf{invariant random equivalence relation (IRE)} is a random element $R$ of $\mathcal E_\Gamma$ with invariant distribution.
    \end{definition}

We now define the examples of IREs that we will make use of.

\begin{example}[Cluster equivalence relation]
    Let $\Pi$ be an invariant random subset of $\Gamma$. For each $x, y \in \Gamma$ we may define $x R_\Pi y$ if and only if $x$ and $y$ belong to the same $\Pi$-cluster.
\end{example}

A \emph{rerooting invariant event} is a Borel subset $U \subseteq \mathcal E_\Gamma$ such that for every $r \in U$ and $\gamma \in [o]_r$ we have $\gamma r \in U$. An IRE $R$ is \emph{ergodic} if $\PP [R \in U] = 0$ or $1$ for every rerooting invariant event.

\begin{example}[Colouring of an IRE]
    An \emph{ergodic colouring} of an ergodic IRE $R$ is a coupling $(R, \Upsilon)$ where $\Upsilon$ is an ergodic invariant random colouring with the following property. For every $x,y \in \Gamma$, if $xRy$, then $\Upsilon (x) = \Upsilon (y)$ almost surely. That is, the colours of $\Upsilon$ are constant on $R$-classes. 

    The explicit example we will make use of is the following: let $R$ be an IRE and $\eps >0$. The \emph{Bernoulli colouring of $R$}, denoted by $\Upsilon_R$ is the invariant random subset defined as follows. First, independently at random assigning to each $R$-equivalence class $C$ the value 1 with probability $\eps$ and $0$ otherwise. Then, give each element $x \in C$ the value assigned to $C$. The construction of this object is discussed in Section \ref{clust.ext}.
\end{example}

The following proposition follows from \cite[Corollary 5.2.11]{hector}. It can also be bypassed (as in the more general proof in the paper) by considering the ergodic decomposition, but it is of independent interest.

\begin{proposition}\label{prop:ergodic.ber}
    Let $R$ be an ergodic IRE. Suppose that for every ergodic colouring $(R,\Upsilon)$ there are infinitely many $R$-classes in $\Upsilon$ almost surely. Then the Bernoulli colouring $\Upsilon_R$ is ergodic for every $0 < \eps <1$.
\end{proposition}

\subsection{Kazhdan optimal colourings}\label{NewResultsSection}

\begin{definition}
    Suppose $\Upsilon$ is an invariant random $d$-colouring and $\delta \geq 0$. We say that $\Upsilon$ is \emph{$\delta$-balanced} if for all colours $c \in [d]$
    \[
        \abs{\intensity(\Upsilon_c) - 1/d} \leq \delta,
    \]
    that is, if $\PP[o \text{ is coloured } c \text{ in } \Upsilon] \approx_\delta 1/d$.
\end{definition}

\begin{example}[Bernoulli colouring]
    Construct an invariant random $d$-colouring $\Upsilon$ by assigning colours to each point of $\Gamma$ independently and uniformly at random. This is the \emph{Bernoulli} colouring, and it is $0$-balanced. Sampling the colours from a distribution other than the uniform, one can construct further examples of $\delta$-balanced invariant random colourings with $\delta$ strictly bigger than zero.
\end{example}

\begin{example}[Constant colouring]
    Construct an invariant random $d$-colouring $\Upsilon$ by choosing \emph{one} colour uniformly at random and colouring \emph{all} $\gamma \in \Gamma$ by that colour. The outcome is again 0-balanced, and clearly different from the above colouring. Note that this colouring is nonergodic (we will make use of this observation).
\end{example}

\emph{Fix a symmetric finite generating set $S \subset \Gamma$}. We introduce a notion of expansion depending on this choice of generators.

\begin{definition}
    Let $\Upsilon$ be an invariant random colouring of $\Gamma$. Its \emph{$S$-expansion} is
    \[
        \mathcal{E}_S(\Upsilon) = \EE\abs{ \{ s \in S \mid \Upsilon(s) \neq \Upsilon(o) \} },
    \]
    that is, the expected number of neighbours of the root $o \in \Gamma$ assigned a colour \emph{differing} from $o$'s.

    The \emph{$(d, S,\delta)$-Kazhdan constant} of $\Gamma$ is
    \[
        K(d, S, \delta) = \inf_\Upsilon \mathcal{E}_S(\Upsilon),
    \]
    where the infimum ranges over all $\delta$-balanced \emph{ergodic} invariant random $d$-colourings $\Upsilon$.
\end{definition}

Note that $\mathcal{E}_S(\Upsilon) = 0$ if and only if $\Upsilon$ concentrates on the constant colourings. 

\begin{proposition}
    Suppose that $\Gamma$ has Property (T) and $\delta < 1/d$. Then:
    \begin{enumerate}
        \item There exists an ergodic invariant random $d$-colouring $\Upsilon$ attaining the infimum of the Kazhdan constant, that is $\mathcal{E}_S(\Upsilon) = K(d,S,\delta)$).
        \item Additionally, $K(d, S, \delta) > 0$.
    \end{enumerate}
\end{proposition}

\begin{proof}
    \noindent\begin{enumerate}
        \item Choose $\Upsilon_n$ to be ergodic invariant random $d$-colourings with $\mathcal{E}_S(\Upsilon_n)$ decreasing to $K(d, S, \delta)$. By compactness we may assume that $\Upsilon_n$ weakly converges to some invariant random $d$-colouring $\Upsilon$. By the Glasner-Weiss theorem, $\Upsilon$ must be ergodic. Since the function $[d]^\Gamma \to \ZZ$ given by $\omega \mapsto \abs{ \{ s \in S \mid \omega(s) \neq \omega(o) \} }$ is continuous, weak convergence implies that $\mathcal{E}_S(\Pi) = K(d,S,\delta)$. 
        \item Suppose $K(d, S, \delta) = 0$. Then there exists an ergodic invariant random $d$-colouring $\Upsilon$ with $\mathcal{E}_S(\Upsilon) = 0$. But, as observed in the preamble of the proposition, such $\Upsilon$ must concentrate on the constant colourings, contradicting ergodicity of $\Upsilon$, as $\delta < 1/d$.\qedhere
    \end{enumerate}
\end{proof}

\begin{definition}
    An ergodic invariant random $d$-colouring $\Upsilon$ of $\Gamma$ with $\mathcal{E}_S(\Upsilon) = K(d,S,\delta)$ is deemed \emph{Kazhdan optimal}.
\end{definition}

\begin{theorem}\label{MainTool}
    Suppose $\Gamma$ is a countably infinite group with Property (T). Then $\Gamma$ has the \emph{sparse finitely many clusters property}: for every $\iota > 0$, there exists an invariant random subset $\Pi$ of $\Gamma$ with intensity at most $\iota$ and a unique cluster. In particular $\Gamma$ has cost 1.
\end{theorem}

\begin{proof}
    The sparse unique infinite cluster property implies cost 1 by Proposition \ref{StrategyIdea}. We will show that, in a Kazhdan optimal coloring, a nonempty subset of some colour class has finitely many clusters. First, choose $d$ and $\delta$ such that $1/d + \delta < \iota$, and let $\Upsilon$ be a $(d,S, \delta)$-Kazhdan optimal colouring. Note that each colour class of $\Upsilon$ has intensity at most $\iota$ by definition. 

     If some colour $c \in [d]$ has the property that $\Upsilon_c$ has finitely many infinite clusters, then we are done. Therefore, we may \emph{assume that every colour class has infinitely many clusters}. Since $K(d,S,\delta)>0$, there exist two colours $r, b \in [d]$ with the property that
    \[
        \PP[\exists s \in S, o \text{ is coloured } r \in \Upsilon, s \text{ is coloured } b \in \Upsilon] > 0.
    \]
    That is, when sampling from $\Upsilon$, a red root has a blue neighbour with positive probability. It follows from invariance and ergodicity that almost surely there are infinitely many edges $(\gamma, \gamma s)$ in the $\Upsilon$-coloured Cayley graph of $\Gamma$ such that $\gamma$ is red and $\gamma s$ blue. Without loss of generality, we can assume that $\intensity(\Upsilon_r) \geq 1/d$ and $\intensity(\Upsilon_b) \leq 1/d$. 

    Suppose one of the colours (red, say) has finitely many clusters at distance one from the blue components. Then we can measurably select those finitely many clusters, concluding the proof. So \emph{suppose there are infinitely many red and blue clusters at distance one from each other.} We will show that this contradicts Kazhdan optimality. 
    
    Let $\Upsilon_r$ be the $\delta$-Bernoulli colouring of the equivalence relation given by belonging to the same red cluster at distance one from a blue cluster. We may assume without loss of generality that $\Upsilon_r$ is ergodic, for otherwise, by Proposition \ref{prop:ergodic.ber}, there is a coupling $(\Upsilon,\Pi)$ such that $\Pi$ selects finitely many red $\Upsilon$ clusters, concluding the proof. Let $\widetilde{\Upsilon}$ be obtained by assigning colour blue to those red clusters with $\Pi_r$ colour 1, and otherwise keeping the $\Upsilon$ colours. 
    
    Note that $\mathcal E_S (\widetilde{\Upsilon}) < \mathcal E_S (\Upsilon)$. Indeed, by ergodicity, in almost every component there are edges which formerly had end-vertices red-blue, but now have end-vertices blue-blue. Edges with distinct colours for their end-vertices in $\tilde\Upsilon$ already had distinct colour end-vertices in $\Upsilon$, so claim is proven. 

    Since $\intensity (\Upsilon_b) \leq 1/d \leq \intensity (\Upsilon_r)$, we have that $1/d - \delta \leq \intensity (\tilde\Upsilon_r) \leq \intensity (\tilde\Upsilon_b) \leq 1/d + \delta$. Hence we have produced a $\delta$-balanced ergodic invariant random $d$-colouring $\tilde\Upsilon$ with $S$-expansion strictly smaller than the Kazhdan constant; a contradiction.
\end{proof}

\def\MR#1{}
\bibliographystyle{amsalpha} 
\bibliography{bib}

\end{document}